\titleformat{\chapter}[hang]
  {\vspace{-1cm}\normalfont\huge\bfseries\Huge}
  {\chaptertitlename\ \thechapter}{10pt}{\,\,\,\,\sc\Huge}
\newcommand{\subtitle}[1]{%
  \posttitle{%
    \par\end{center}
    \begin{center}\large#1\end{center}
    \vskip0.5em}%
}
\definecolor{uuuuuu}{rgb}{0.26666666666666666,0.26666666666666666,0.26666666666666666}
\DeclareFontFamily{U}{matha}{\hyphenchar\font45}
\DeclareFontShape{U}{matha}{m}{n}{
      <5> <6> <7> <8> <9> <10> gen * matha
      <10.95> matha10 <12> <14.4> <17.28> <20.74> <24.88> matha12
      }{}
\DeclareSymbolFont{matha}{U}{matha}{m}{n}
\DeclareFontFamily{U}{mathx}{\hyphenchar\font45}
\DeclareFontShape{U}{mathx}{m}{n}{
      <5> <6> <7> <8> <9> <10>
      <10.95> <12> <14.4> <17.28> <20.74> <24.88>
      mathx10
      }{}
\DeclareSymbolFont{mathx}{U}{mathx}{m}{n}
\DeclareMathDelimiter{\vvvert}{0}{matha}{"7E}{mathx}{"17}
\newcommand{\e}{\mathrm{e}}
\newcommand{\PP}{\mathbb{P}}
\newcommand{\E}{\mathbb{E}}
\newcommand{\R}{\mathbb{R}}
\newcommand{\N}{\mathbb{N}}
\newcommand{\Sph}{\mathbb{S}}
\newcommand{\dd}{\mathrm{d}}
\newcommand{\OO}{\mathcal{O}}
\newcommand{\1}{\mathbf{1}}
\newcommand{\MM}{\mathscr{M}}
\newcommand{\T}{\mathbb{T}}
\DeclareMathOperator{\supess}{supess}
\DeclareMathOperator{\Supp}{Supp}
\DeclareMathOperator{\Tr}{Tr}
\newtheoremstyle{exemple}
	{\topsep}
	{\topsep}
	{\normalfont}
	{}
	{\sc}
	{.}
	{5pt plus 1pt minus 1pt}
	{}
\newtheorem{theorem}{Theorem}
\newtheorem{proposition}{Proposition}[section]
\newtheorem{lemma}{Lemma}[section]
\newtheorem{corollary}{Corollary}[section]
\theoremstyle{definition}
\newtheorem{definition}{Definition}[section]
\theoremstyle{example}
\newtheorem{rem}{Remark}[section]
\newtheorem{assumption}[]{Assumption}
\title{\bf Propagation of chaos and moderate interaction for a piecewise deterministic system of geometrically enriched particles}
\date{}
\author{Antoine \sc{Diez}}
\affil{
Department of Mathematics, Imperial College London, South
Kensington Campus,
London, SW7 2AZ, UK,

\url{antoine.diez18@imperial.ac.uk}

}
\begin{document}
\renewcommand{\chaptername}{}
\renewcommand{\proofname}{\sc{Proof}}
\maketitle
\abstract{In this article we study a system of $N$ particles, each of them being defined by the couple of a position (in $\R^d$) and a so-called orientation which is an element of a compact Riemannian manifold. This orientation can be seen as a generalisation of the velocity in Vicsek-type models such as \cite{degond2008continuum,degondfrouvellemerino17}. We will assume that the orientation of each particle follows a jump process whereas its position evolves deterministically between two jumps. The law of the jump depends on the position of the particle and the orientations of its neighbours. In the limit $N\to+\infty$, we first prove a propagation of chaos result which can be seen as an adaptation of the classical result on McKean-Vlasov systems \cite{sznitman89} to Piecewise Deterministic Markov Processes (PDMP). As in \cite{jourdainmeleard}, we then prove that under a proper rescaling with respect to $N$ of the interaction radius between the agents (moderate interaction), the law of the limiting mean-field system satisfies a BGK equation with localised interactions which has been studied as a model of collective behaviour in \cite{toto}. Finally, in the spatially homogeneous case, we give an alternative approach based on martingale arguments.}\\

\noindent\textit{Keywords:} Collective motion; mean-field limit; Vicsek model; Run and tumble; jump process.    \\

\noindent AMS Subject Classification: 35Q70, 58J60, 60J75, 60J25, 60K35, 82C22. 

%\tableofcontents

\section{Introduction}

In many systems of interacting agents, a wide range of self-organised collective behaviours can be observed. It includes the flock of birds \cite{hildenbrandt2010self}, sperm motion \cite{degond2015multi}, opinion dynamics \cite{degond2017continuum} etc. These systems are composed of many interacting agents and the interacting mechanisms which lead to the observed phenomena are not always known or tractable. To study such complex systems a common procedure consists in writing kinetic or fluids models, i.e. a model for the evolution of an observable macroscopic quantity, for instance the spatial density of agents or their average velocity. Starting from an Individual Based Model (IBM) which describes the interacting mechanisms at the level of the agents (microscopic description), the relevance of a macroscopic model can be justified when the number of agents tends to infinity. The present work aims to be a contribution to the rigorous derivation of kinetic models of collective behaviour such as \cite{toto} from an IBM. 

\subsection{Models for collective dynamics: prototype and extensions}\label{prototypical}

Complex collective behaviours at the macroscopic scale can be observed even from IBM with simple interacting rules between the agents. An example of such IBM is given by the classical Vicsek model \cite{vicsek1995novel}. In this model $N$ oriented agents evolve in $\R^d$. Their velocity is assumed to have constant modulus and then is only prescribed by their orientation which is a unit vector in the sphere $\Sph^{d-1}$. At discrete times, each agent computes the average orientation of its neighbours and takes as a new orientation a random perturbation of this average. A continuous time version of this model can be described by a Piecewise Deterministic Markov Process (PDMP) in the phase space $\R^d\times\Sph^{d-1}$ (see \cite{azais2014piecewise,davis84} for a review on PDMP). Each of the $N$ agents is defined at time $t\geq0$ by its position $X^i_t\in\R^d$ and its orientation $\omega^i_t\in\Sph^{d-1}$, $i\in\{1,\ldots,N\}$ with the following rules: 
\begin{enumerate}[$\bullet$]
\item To each agent is attached an internal clock (independent of the other agents' clocks), that is a sequence of jump times $T^i_1,T^i_2,\ldots$ such that the increments $T^i_{n+1}-T^i_n$ are independent and identically distributed with respect to an exponential law with constant parameter. 
\item Between two jump times, the agent follows a deterministic flow: 
\[\forall n\geq1,\,\,\forall t\in[T^i_n,T^i_{n+1}),\,\,\,\left\{
\begin{array}{rcl}
X^i_t &=& X^i_{T_n}+(t-T^i_n)\omega^i_t \\
\omega^i_t &=& \omega^i_{T_n}
\end{array}
\right. , \]
i.e. the agent moves at constant speed $\omega^i_{T_n}$. 
\item At each jump time $T^i_n$, a jump occurs (for the orientation only), given by a transition kernel: 
\[Q_{T^i_n}\left(X^i_{T^i_n},\omega_{T^i_n}\right) = \delta_{X^i_{T^i_n}}\otimes M\big[X^i_{T^i_n}\big]\]
where $M[X^i_{T^i_n}]$ is a probability distribution function (PDF) on $\Sph^{d-1}$ which depends on the position $X^i_{T_n}$. A typical choice for this PDF would be a von Mises type distribution on $\Sph^{d-1}$, the mean of which is in the direction of the average orientation of the other agents in a neighbourhood of $X^i_{T^i_n}$ : 
\begin{equation}\label{vonmises}M\big[X^i_{T^i_n}\big](\omega) = \frac{\e^{J\big(X^i_{T^i_n}\big)\cdot\omega}}{\mathcal{Z}}\end{equation}
where $\mathcal{Z}$ is a normalisation constant and 
\begin{equation}\label{Jvicsek}J\Big(X^i_{T^i_n}\Big) = \sum_{j=1}^N K\left(X^i_{T^i_n}-X^j_{T^i_n}\right)\omega^j_{T^i_n}\in \R^d\end{equation}
is a local non-normalised average orientation (also called a flux) and $K$ is typically the indicator of a ball centered at the origin with integral 1 (observation kernel). The von Mises distribution \eqref{vonmises} is a unimodal distribution with a maximum at the point $J(X^i_{T^i_n})/|J(X^i_{T^i_n})|\in\Sph^{d-1}$ when $J\big(X^i_{T^i_n}\big)\ne0$ and is the uniform distribution on the sphere when $J\big(X^i_{T^i_n}\big)=0$. 
\end{enumerate}

As the number of agents $N$ tends to infinity, the kinetic model associated to the previous IBM can be formally derived and is given by the following Partial Differential Equation (PDE) on the density of agents $f=f(t,x,\omega)$ on $\R^d\times\Sph^{d-1}$ : 
\begin{equation}\label{bgkvicsek}\partial_t f + \omega\cdot\nabla_x f = \rho_f M_{J_{K*f}}-f\end{equation}
where
\[\rho_f(x) = \int_{\Sph^{d-1}} f(t,x,\omega)\dd\omega\]
is the local spatial density of agents,
\begin{equation}\label{Jbgk}J_{K*f}=J_{K*f}(x) = \iint_{\R^d\times\Sph^{d-1}} K(x-y)\omega f(t,y,\omega)\dd y \dd\omega \in\R^d\end{equation}
is the local non-normalised average orientation (or flux) of the agents and
\[M_{J_{K*f}}= M_{J_{K*f}(x)}(\omega) = \frac{\e^{J_{K*f}(x)\cdot\omega}}{\mathcal{Z}}\]
is the von Mises distribution on $\Sph^{d-1}$ with parameter $J_{K*f}(x)$. This type of equations is referred in the literature as a Bhatnagar-Gross-Krook (BGK) type equation \cite{bhatnagar1954model}.\\ 

This prototypical model (the IBM and its kinetic version) will be the starting point of this article. A very close form of this model and a formal derivation of the kinetic model from the IBM have first been written in \cite{dimarcomotsch16}. The main difference with our model is the computation of the average orientation: in \cite{dimarcomotsch16}, the local fluxes \eqref{Jvicsek} and \eqref{Jbgk} are respectively replaced by a normalised version:
\[\Omega^i_{T^i_n} = \frac{J\big(X^i_{T^i_n}\big)}{|J\big(X^i_{T^i_n}\big)|}\in\Sph^{d-1}\,\,\,\text{and}\,\,\,\Omega_f =\frac{J_{K*f}}{|J_{K*f}|}\in\Sph^{d-1}\]
and can therefore be interpreted as normalised average orientations in $\Sph^{d-1}$. From a modelling point of view, in our model, when a jump occurs, the bigger the norm of the flux, the more peaked the von Mises distribution is and therefore the new orientation of the agent is (expectedly) closer to the local normalised average orientation. From a mathematical point of view the normalised version of the model studied in \cite{dimarcomotsch16} is more singular, as the normalised average orientation is not defined when the flux \eqref{Jvicsek} or \eqref{Jbgk} is equal to zero. In this case, in our model, the agent will simply draw a new orientation uniformly on the sphere. It has also been shown in \cite{degondfrouvelleliu15} that complex behaviours and in particular phase transitions only appear in models with a non-normalised flux. Without this singularity problem, the aim of this article is to prove the well-posedness of the PDE \eqref{bgkvicsek} and to rigorously justify its derivation from the IBM by taking the limit (in a certain sense) when $N$ tends to infinity. \\ 

Starting from the original work of Vicsek \cite{vicsek1995novel} and apart from the previously cited model \cite{dimarcomotsch16}, different models of collective dynamics have been proposed in the literature. Continuous time models include \cite{degond2008continuum} where the analog of our IBM is given by a system of Stochastic Differential Equations (SDEs) and the kinetic version analogous to \eqref{bgkvicsek} is a given by a Fokker-Planck equation. The links between the IBM and the kinetic Fokker-Planck equation have been studied in \cite{bolley2011stochastic} and the analysis of the kinetic model has been carried out in \cite{degondfrouvelleliu15}. In addition to the choice of form of the IBM (PDMP or SDE), a new perspective has been investigated in \cite{degondfrouvellemerino17,degondfrouvellemerinotrescases18} to model agents with a more complex geometrical structure. In these models, the velocity of the agents is replaced by their so called body-orientation (in dimension 3) which is modelled either by a rotation matrix or by a quaternion. A PDMP model and in particular its kinetic version have been studied (in a space homogeneous case) in \cite{toto}.\\

 In this article we will focus on IBMs based on a PDMP. We will rigorously derive their kinetic versions, which will be given by BGK type equations such as \eqref{bgkvicsek}. To encompass the  ``geometrically enriched'' models \cite{toto} and the model initially described, we will write a model in an abstract framework where the orientation of the agents is an element of a compact Riemannian manifold $\mathscr{M}$ : the choice $\mathscr{M}=\Sph^{d-1}$ gives the model \eqref{bgkvicsek} and the choice $\mathscr{M}=SO_3(\R)$ gives the model studied in \cite{toto}. See \cite{toto,degond2018alignment} and the references therein for a review and a comparison of these models. \\

\subsection{Challenges, methods and contributions}

From a mathematical point of view, the classical mathematical kinetic theory of gases provides well adapted tools and concepts to study multi-scale models and limiting procedures between models at different scales. Its infancy goes back to Boltzmann and Hilbert at the end of the XIXth century and the beginning of the XXth century.  The subject has been of growing interest throughout the XXth century and up to now. A review of the main concepts can be found in \cite{degond2004macroscopic,cercignani2013mathematical}. Following these ideas, in \cite{degond2008continuum}, the authors extended the classical Vicsek model \cite{vicsek1995novel} to a time-continuous IBM from which kinetic and fluid models can be derived. The analysis of the kinetic formulation of the Vicsek model exhibits collective behaviour such as phase transitions \cite{degondfrouvelleliu15} between ordered and disordered dynamics which can be observed in biological systems such as the flock of birds. \\

However, the rigorous derivation of kinetic PDEs from IBMs remains one of the main challenges in classical kinetic theory. The type of interactions which is studied in the present work enters into the class of mean-field interactions with randomness. The mean-field theory approach consists in approximating the trajectory of a typical particle by considering that it evolves in a force field constructed by averaging the interactions between the neighbouring particles. The rigorous derivation of the associated kinetic PDE models such as the Vlasov equation have been initiated in particular by Dobrushin \cite{dobrushin1979vlasov}, Braun and Hepp \cite{braun1977vlasov} in a deterministic framework or by McKean \cite{mckean1966class,mckean1967propagation} and Kac \cite{kac1956foundations} to incorporate randomness. A key notion in this context is the so-called \textit{molecular chaos} or more precisely the \textit{propagation of chaos} property formalised by Kac \cite{kac1956foundations} in order to investigate how the statistical independence between the particles evolves in time depending on the interaction rules. Reviews of old and recent results on systems of particles with mean-field interactions can be found for instance in \cite{jabin2017mean,carrillo2014derivation,golse2016dynamics}. \\

The classical work by Sznitman \cite{sznitman89} provides an efficient method based on coupling arguments to prove the propagation of chaos property for McKean-Vlasov systems of Stochastic Differential Equations (SDEs). In particular, it has been successfully applied for the continuous version of the Vicsek model in \cite{bolley2012mean}. Coupling arguments in the context of PDMPs have been used in \cite{monmarche} to prove well-posedness and trend to equilibrium of systems of PDMP with mean-field interaction. In the present work, we extend the method of coupling of \cite{sznitman89} to the piecewise deterministic setting and we prove a propagation of chaos property for a geometrically enriched system of PDMPs. This property is twofold: firstly it states that if the agents are initially chosen independently, then this independence is propagated at any finite time as the number of agents $N$ increases. Secondly it states that the law of any agent (they are identically distributed by symmetry) converges in a certain sense to the solution of a BGK equation (namely \eqref{bgkvicsek} in the case $\MM=\Sph^{d-1}$). An explicit convergence rate with respect to $N$ is obtained.  \\ 

In addition to this propagation of chaos property and following the approach of \cite{jourdainmeleard} we will also prove a moderate interaction property. It states that under an appropriate rescaling of the size of the neighbourhood of the agents with respect to the total number of agents, the interaction can be made purely local, which means that we can take $K=\delta_0$ in the BGK equation. This rescaling is achieved by taking in the IBM an observation kernel of the form: 
\[K^N(x)=\frac{1}{\varepsilon_N^{d}}K_0\left(\frac{x}{\varepsilon_N}\right)\]
where $K_0$ is a fixed observation kernel and $\varepsilon_N\to0$ slowly enough. For our prototypical model described in subsection \ref{prototypical}, it proves that the law of any agent converges (in a certain sense) as $N\to+\infty$ to the solution of the following BGK equation \eqref{bgkvicsek}:
\[\partial_t f +\omega\cdot\nabla_x f = \rho_f M_{J_f}-f\]
where
\[\rho_f(x) = \int_{\Sph^{d-1}}f(t,x,\omega)\dd\omega\,\,\,\text{and}\,\,\,J_f(x) = \int_{\Sph^{d-1}}\omega f(t,x,\omega)\dd\omega\]
and $M_{J_f}$ is the von Mises distribution on $\Sph^{d-1}$ with parameter $J_f$. This result will follow from the explicit bound in $N$ obtained in the proof of the propagation of chaos and from classical compactness arguments to pass to the limit inside the BGK equation ``with kernel interaction'' when $K^N\to\delta_0$ (see Equation \eqref{bgkvicsek}). This will require to prove \emph{ad hoc} regularity properties on the solution of the BGK equation in the geometrically enriched specific setting considered (in particular the equicontinuity and stability under translations of the sequence of solutions associated to the sequence of kernels). \\

 The terminology ``moderate interaction'' comes from \cite{Oelschl_ger_1985} where propagation of chaos and moderate interaction properties are proved using a different approach, based on martingale arguments. As explained in \cite{jourdainmeleard}, in the model studied by Oelschläger, the random part is given by a constant diffusion matrix which does not depend on the current state of the system (i.e. of the empirical measure of the agents). This is not the case in most models of collective dynamics and in particular in the model studied in the present article. However, in a space homogeneous setting, we give an alternative proof of the propagation of chaos property based on martingale arguments. Let us also mention that martingales techniques have also recently been used in \cite{friesen2018stochastic} to prove propagation of chaos for a different model of collective dynamics known in the literature as the stochastic Cucker-Smale model.\\

The organisation of the article is as follows. In the first section we describe the general geometrical framework which encompasses our prototypical model and its extensions (subsection \ref{sectionencompass}). In this framework, we will study the IBM defined in subsection \ref{sectionIBM} and its kinetic version (subsection \ref{sectionkinetic}). The main results are stated in subsection \ref{sectionmainresults}, in particular Theorem \ref{propagationofchaostheorem} (propagation of chaos) and Theorem \ref{moderateinteractiontheorem} (moderate interaction). Section \ref{propagationofchaossection} is devoted to the proof of the former. The latter is proved in Section \ref{moderateinteractionsection} together with regularity results for the solution of the BGK equation. Finally, in Section \ref{alternativesection}, we give an alternative approach based on martingale techniques in a space-homogeneous model. \\

\subsection{Notations and definitions}
For the convenience of the reader, we collect here the main notations and technical definitions that will be used in the rest of the article. 
\begin{enumerate}[$\bullet$]
\item $\mathcal{P}(E)$ is the space of probability measures on the Polish space $E$. 
\item $\widetilde{\mathcal{P}}(E)$ is the space of probability measures on the measure space $(E,\mu)$ which are absolutely continuous with respect to the measure $\mu$ on $E$. An element of $\widetilde{\mathcal{P}}(E)$ is identified with its associated probability density function.
\item $s^\sharp\mu$ denotes the push-forward measure of the measure $\mu$ on a Polish space $E$ by the measurable map $s:E\to E$. It is defined by $s^\sharp\mu(A) = \mu(s^{-1}(A))$ for any Borel subset $A\subset E$. 
\item $\mathcal{M}_+(E)$ is the space of positive measures on the Polish space $E$.
\item $\mathcal{M}^N(E)$ is the space of empirical measures of size $N\in\N$ on the Polish space $E$, that is to say:
\[\mathcal{M}^N(E):=\left\{\frac{1}{N}\sum_{i=1}^N \delta_{x_i},\,\,(x_1,\ldots,x_N)\in E^N\right\}\subset\mathcal{P}(E)\]
\item $C(E_1,E_2)$ is the space of continuous functions from the (metric) space $E_1$ to the (metric) space $E_2$. When $E_2=\R$ we write $C(E_1)\equiv C(E_1,\R)$. 
\item $C_b(E)$ is the space of bounded real-valued continuous functions on the metric space $(E,d)$. 
\item $L^p(E)$ for $p\in[1,\infty]$ is the space of real-valued measurable functions $f$ on the measure space $(E,\mu)$ such that 
\[\|f\|_{L^p(E)}:=\left(\int_E |f(x)|^p\dd\mu(x)\right)^{1/p}<\infty\,\,\,\text{for}\,\,\,p\in[1,\infty),\]
\[\|f\|_{L^\infty(E)} := \underset{x\in E}{\supess} |f(x)|<\infty\,\,\,\text{for}\,\,\,p=\infty.\]
We write for short $\|\cdot\|_{L^p}\equiv\|\cdot\|_{L^p(E)}$ when the space $E$ is clearly identified. 
\item $\|\varphi\|_{\mathrm{Lip}}$ denotes the Lipschitz norm of the real-valued function $\varphi$ on the metric space $(E,d)$ : 
\[\|\varphi\|_{\mathrm{Lip}}:=\sup_{x\ne y}\frac{|\varphi(x)-\varphi(y)|}{d(x,y)}.\]
\item $X\sim\mu$ for a random variable $X$ and a probability measure $\mu$ means that the law of $X$ is $\mu$
\item $W^1(\mu_1,\mu_2)$ is the Wasserstein-1 distance between two probability measures on the Polish space $(E,d)$ defined by: 
\[W^1(\mu_1,\mu_2):=\inf_{\pi\in\Pi(\mu_1,\mu_2)} \int_{E\times E} d(x,y)\dd\pi(x,y)\]
where $\Pi(\mu_1,\mu_2)$ is the set of all couplings of $\mu_1$ and $\mu_2$ that is to say the set of probability measures $\pi$ on the product space $E\times E$ such that the first marginal of $\pi$ is equal to $\mu_1$ and the second marginal of $\pi$ is equal to $\mu_2$. Equivalently $W^1(\mu_1,\mu_2)$ is defined by 
\[W^1(\mu_1,\mu_2)=\inf_{\substack{X\sim \mu_1\\ Y\sim \mu_2}} \E[d(X,Y)]\]
or (Kantorovich dual formulation):
\begin{equation}\label{kantorovich}W^1(\mu_1,\mu_2)=\sup_{\|\varphi\|_{\mathrm{Lip}}\leq1}\left\{ \int_{E}\varphi(x)d\mu_1(x)-\int_E \varphi(x)d\mu_2(x)\right\}.\end{equation}
\item $\|\mu_1-\mu_2\|_{TV}$ denotes the total variation norm between the probability measures $\mu_1$ and $\mu_2$ on the Polish space $(E,d)$ defined by: 
\[\|\mu_1-\mu_2\|_{TV}:=2\inf_{\substack{X\sim \mu_1 \\ Y\sim\mu_2}}\PP(X\ne Y)\]
or by the duality formula:
\[\|\mu_1-\mu_2\|_{TV}=\sup_{\|\varphi\|_{L^\infty}\leq 1}\left\{ \int_{E}\varphi(x)d\mu_1(x)-\int_E \varphi(x)d\mu_2(x)\right\}.\]
Since $E$ is a Polish space, the supremum can be taken over real-valued bounded continuous functions with $L^\infty$ norm bounded by 1 (equivalence between the total variation distance and the Radon distance on Polish spaces). When $\mu_1$ and $\mu_2$ have a density, their total variation distance is therefore the $L^1$ norm of the difference of the two associated probability density functions. More details about the Wasserstein and Total Variation distances and the proof of these properties can be found in \cite{villanioptimaltransport}.

\end{enumerate}

\section{Abstract framework and main results}\label{generalsetting}
\subsection{Functional set up}\label{sectionencompass}

\subsubsection{Assumptions and definitions in the abstract framework}
From now on we will use the following functional set up.
\begin{enumerate}
\item Let $(\mathscr{M},g)$ be a compact finite dimensional Riemannian manifold of dimension $p$. The Riemannian distance induced by $g$ is denoted by $d$. On the product space $\R^d\times \MM$ we take the metric 
\[\tilde{d}\big((x_1,m_1),(x_2,m_2)\big):=|x_1-x_2|+d(m_1,m_2)\]
and we write for short $\tilde{d}\equiv d$ when there is no possible confusion. The volume form associated to $g$ is assumed to be normalised and will be denoted by $\dd m$ (i.e. $\int_\mathscr{M}\dd m=~1$). We will also assume that $\mathscr{M}$ is isometrically embedded in a euclidean space $E$ where the inner product is denoted by $\cdot$ and the norm by $|\cdot|$. This embedding is never a loss of generality thanks to Nash's embedding theorem \cite{nash1956imbedding}. We will also assume without loss of generality that for all $m\in\MM$, $|m|\leq1$. 
 \item Let $\Phi : \mathscr{M}\to \R^d$ be a \textit{velocity map} which is a continuous and $\lambda$-Lipschitz map for a given constant $\lambda\geq0$.
\item Let $K$ be a smooth observation kernel on $\R^d$, that is to say a radial smooth bounded Lipschitz function which tends to zero at infinity (typically a smoothened version of the indicator of a ball centred at the origin) such that $K\geq 0$ and $\int_{\R^d} K(x)\dd x=1$.
 \item Let $M_J$ be an \textit{interaction law}, defined for any $J\in E$ as a probability density function on $\mathscr{M}$ which satisfies the following assumptions. 
 
\begin{assumption}[Locally bounded]\label{asslocallybounded}
 There exists a function $\alpha=\alpha(a)$ such that for any $a>0$ and any $J\in E$ with $|J|\leq a$, it holds that
\begin{equation}\label{assumptionlocallybounded}
\| M_J\|_{L^\infty(\mathscr{M})}\leq \alpha(a).
\end{equation}
\end{assumption}

\begin{assumption}[Locally Lipschitz]\label{assumptionlocallipschitz}
There exists a function $L=L(a)$ such that for any $a>0$ and any $J\in E$ with $|J|\leq a$, it holds that
\begin{equation}\label{locallylipschitz}
|M_J(m_1)-M_J(m_2)|\leq L(a)d(m_1,m_2).
\end{equation}
\end{assumption}

\begin{assumption}[Flux Lipschitz]\label{assumptionfluxlipschitz}
There exists a function $\theta=\theta(a)$ such that for any $a>0$ and any $J,J'\in E$ with $|J|,|J'|\leq a$, it holds that
\begin{equation}\label{fluxlipschitz}
\|M_{J}-M_{J'}\|_{L^\infty(\mathscr{M})}\leq \theta(a)|J-J'|.
\end{equation}
\end{assumption}

\end{enumerate}

\begin{rem}
The Lipschitz regularity assumptions (for the interaction kernel and for the interaction law) are classical for mean-field type results (see \cite{sznitman89}). The present work essentially focuses on the extension of classical results and techniques to the PDMP framework with geometrical constraints. Recent results which investigate the case of less regular interactions but in the classical McKean-Vlasov framework can be found for instance in \cite{jabin2017mean}. 
\end{rem}

In order to define interaction rules between the agents we now define two objects: the \textit{flux} of a measure which will be a way of constructing an average orientation from a distribution of orientations and the \textit{observation measure}, the purpose of which will be to define the local average orientation around a point in the physical space $\R^d$. 

\begin{itemize}

\item The \textit{flux} of a positive measure $\mu\in\mathcal{M}_+(\mathscr{M})$ on $\mathscr{M}$ is defined by 
\begin{equation}\label{flux}
J_\mu := \int_{\mathscr{M}} m\,\dd\mu(m) \in E.\end{equation}
The interaction law relative to a positive measure $\mu\in\mathcal{M}_+(\mathscr{M})$ is defined as the probability density function $M_{J_\mu}$ on $\MM$ and will be denoted only by $M_\mu$ in the following. \\

%In the two previous examples and in the following, we will study a system of interacting agents defined by their position in space (an element of $\R^d$) and a geometrical piece of information (an element of $\MM$) which will be called \textit{orientation}. The velocity map then transforms the orientation into a velocity vector in $\R^d$. The interaction between the agents will be supposed to be local in space, reflecting the idea that an agent interacts only with its neighbours. Instead of looking at each agent individually, we will focus on a global description of the system given by a probability measure on $\R^d\times\MM$. This measure will be either the empirical measure of the system or the classical distribution function (one of the aims will be to prove that the distribution function is the limit of the empirical measure when the number of agents tends to the infinity). 

%Keeping in mind that this probability measure gives the distribution (or number) of agents at position $x\in\R^d$ and orientation $m\in\MM$, we want therefore to define the distribution of the orientations of the agents around a given $x\in\R^d$ (i.e. the average orientation around an agent located at $x\in\R^d$). 

\item Given a probability measure $p$ on $\R^d\times\MM$, we define the \textit{observation measure} by taking the convolution product of $p$ with the observation kernel $K$: its purpose is to define the local orientation with respect to $p$ around a point $x\in\R^d$. In particular $p$ can be either the empirical measure of a system of processes given by an IBM or the solution of a BGK equation in a kinetic model.

\begin{definition}[Observation measure]\label{observationmeasure}
Let $p\in\mathcal{P}(\R^d\times \mathscr{M})$ and $x\in\R^d$. The observation measure $K*p(x)\in\mathcal{M}_+(\MM)$ is defined by 
\[\forall \varphi\in C(\MM),\,\,\,\langle \varphi,K*p(x)\rangle:=\iint_{\R^d\times\MM}K(x-y)\varphi(m) \dd p(y,m).\]
\end{definition}

Note that $K*p : x\in \R^d\mapsto K*p(x)\in\mathcal{M}_+(\MM)$ defines a smooth map for the total variation topology on $\mathcal{M}_+(\MM)$. The flux and interaction law will be denoted in this case: 
\begin{equation}\label{notationkernelflux}J_{K*p(x)}\equiv J_{K*p}(x)\,\,\,\,\text{and}\,\,\,\,M_{K*p(x)}(m)\equiv M_{K*p}[x](m).\end{equation}

If $p$ is a probability density function in $L^\infty(\R^d\times\MM)$, then the previous definition makes sense in the degenerate case $K=\delta_0$, and we define
\[\forall \varphi\in C(\MM),\,\,\,\langle \varphi,p(x,\cdot)\rangle:=\int_{\MM}\varphi(m) p(x,m)\dd m.\]
The flux and interaction law will be denoted in this case: 
\[J_{p(x,\cdot)}\equiv J_{p}(x)\,\,\,\,\text{and}\,\,\,\,M_{p(x,\cdot)}(m)\equiv M_{p}[x](m).\]
\end{itemize}

To conclude this section, we point out that the assumptions on the interaction law can be reinterpreted as regularity bounds in the space of probability measures when the interaction law comes from a measure on $\MM$. In particular, since 
\[W^1(M_\mu,M_\nu)\leq \|M_\mu-M_\nu\|_{TV}=\|M_\mu-M_\nu\|_{L^1(\MM)}\leq\|M_\mu-M_\nu\|_{L^\infty(\MM)},\]
the flux-Lipschitz bound \eqref{fluxlipschitz} implies that for $p_1,p_2\in \widetilde{\mathcal{P}}(\R^d\times\MM)$ and $x,x'\in\R^d$ :
\begin{equation}\label{wassersteinboundM}W^1(M_{K*p_1}[x],M_{K*p_2}[x'])\leq C\theta\big(\|K\|_{L^\infty}\big)\|K\|_{\mathrm{Lip}}\big(|x-x'|+W^1(p_1,p_2)\big),\end{equation}
where $W^1$ denotes the Wasserstein-1 distance on $\mathcal{P}(\R^d\times\MM)$ or $\mathcal{P}(\MM)$ indifferently.

\subsubsection{The case of the von Mises distribution and application to the Vicsek and body-orientation models}

This general setting encompasses the two following examples derived from the Vicsek model. 
\begin{itemize} 
\item In the $d$-dimensional continuous Vicsek model described in the introduction, the Riemannian manifold is taken to be equal to the sphere $\Sph^{d-1}$, viewed as a submanifold of $\R^d$ endowed with its canonical Euclidean structure. The velocity map $\Phi$ is then the canonical injection $\Sph^{d-1}\hookrightarrow \R^d$.
\item In the three dimensional Body-Orientation model studied in \cite{toto}, the Riemannian manifold is taken to be equal to $SO_3(\R)$ viewed as a submanifold of $\MM_3(\R)$ endowed with the inner product $A\cdot B :=\frac{1}{2}\Tr(A^TB)$. The velocity map is the projection $A\in SO_3(\R)\mapsto Ae_1\in\R^3$ where $e_1$ is a fixed vector (first component of a reference frame). 
\end{itemize}

In both cases, in the previous works \cite{dimarcomotsch16,toto}, the interaction between the agents is given by a von Mises distribution. This family of probability laws has first been introduced for circular statistics on the circle and on the sphere \cite{mardia1975statistics}. It can be extended to a family of probability distributions on $SO_3(\R)$ (it is then sometimes referred as the von Mises--Fisher matrix distribution \cite{mardia2009directional,lee2018bayesian}) and more generally to a family of probability distributions on any compact embedded manifold. This defines an interaction law which satisfies the Assumptions \ref{asslocallybounded}, \ref{assumptionlocallipschitz}, \ref{assumptionfluxlipschitz} as shown in the following proposition. 

\begin{proposition} Let us define for $J\in E$ the von Mises distribution $M_J$ on $\MM$ by: 
\begin{equation}\label{vonmisesJ}
M_J(m):=\frac{\e^{J\cdot m}}{\mathcal{Z}}
\end{equation}
where $\mathcal{Z}$ is a normalisation constant. This defines an interaction law which satisfies Assumptions \ref{asslocallybounded}, \ref{assumptionlocallipschitz}, \ref{assumptionfluxlipschitz} with regularity constants
\[\alpha(a) = \e^{2a},\,\,\,L(a)=a\e^{2a}\,\,\,\text{and}\,\,\,\theta(a)=\e^{2a}+\e^{4a}.\]
\end{proposition}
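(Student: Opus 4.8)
The plan is to reduce the three assumptions to two elementary estimates: a two-sided bound on the normalisation constant, and a Lipschitz bound for the exponential, taken separately in the variable $m$ and in the variable $J$. As a preliminary step I would record that, since $\MM$ is isometrically embedded in $E$ with $|m|\leq 1$ for all $m\in\MM$, whenever $|J|\leq a$ one has $|J\cdot m|\leq |J|\leq a$, hence $e^{-a}\leq e^{J\cdot m}\leq e^{a}$ pointwise on $\MM$; integrating over $\MM$ and using $\int_\MM\dd m=1$ gives $e^{-a}\leq \mathcal{Z}\leq e^{a}$ (I will write $\mathcal{Z}_J$ to stress the dependence on $J$). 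Assumption \ref{asslocallybounded} is then immediate, since $\|M_J\|_{L^\infty(\MM)}\leq e^{a}/e^{-a}=e^{2a}=\alpha(a)$.

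For Assumption \ref{assumptionlocallipschitz} I would view $m\mapsto e^{J\cdot m}$ as a smooth function on the Euclidean space $E$, whose gradient $J\,e^{J\cdot m}$ has norm $\leq|J|e^{a}\leq a\,e^{a}$ on the convex set $\{|m|\leq1\}\supset\MM$. The mean value inequality on the segment joining $m_1$ and $m_2$ then yields $|e^{J\cdot m_1}-e^{J\cdot m_2}|\leq a\,e^{a}|m_1-m_2|$. Since an isometric embedding can only shorten curves, $|m_1-m_2|\leq d(m_1,m_2)$, and dividing by $\mathcal{Z}_J\geq e^{-a}$ gives $|M_J(m_1)-M_J(m_2)|\leq a\,e^{2a}\,d(m_1,m_2)$, i.e. $L(a)=a\,e^{2a}$.

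For the flux-Lipschitz bound (Assumption \ref{assumptionfluxlipschitz}) I would split, for $|J|,|J'|\leq a$,
\[M_J(m)-M_{J'}(m)=\frac{e^{J\cdot m}-e^{J'\cdot m}}{\mathcal{Z}_J}+e^{J'\cdot m}\Big(\frac{1}{\mathcal{Z}_J}-\frac{1}{\mathcal{Z}_{J'}}\Big).\]
The key sub-estimate is $|e^{J\cdot m}-e^{J'\cdot m}|\leq e^{a}|J-J'|$, obtained by applying the mean value inequality to $J\mapsto e^{J\cdot m}$ along the segment $[J,J']$ (which stays in the ball of radius $a$), whose gradient $m\,e^{J\cdot m}$ has norm $\leq e^{a}$. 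The first term is thus bounded by $e^{a}|J-J'|/e^{-a}=e^{2a}|J-J'|$. For the second term, $|\mathcal{Z}_J-\mathcal{Z}_{J'}|\leq\int_\MM|e^{J\cdot m}-e^{J'\cdot m}|\,\dd m\leq e^{a}|J-J'|$, and since $\mathcal{Z}_J\mathcal{Z}_{J'}\geq e^{-2a}$ we get $|\mathcal{Z}_J^{-1}-\mathcal{Z}_{J'}^{-1}|\leq e^{3a}|J-J'|$; multiplying by $e^{J'\cdot m}\leq e^{a}$ bounds the second term by $e^{4a}|J-J'|$. Summing, $\|M_J-M_{J'}\|_{L^\infty(\MM)}\leq(e^{2a}+e^{4a})|J-J'|=\theta(a)|J-J'|$.

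There is no genuine obstacle in this argument; the only two points requiring a little care are the two-sided control of $\mathcal{Z}_J$ — since every estimate ends up divided or multiplied by it — and the comparison $|m_1-m_2|\leq d(m_1,m_2)$ provided by the isometric embedding, which is exactly what allows a Euclidean gradient bound to be promoted to a bound in the Riemannian distance $d$.
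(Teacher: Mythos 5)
Your proposal is correct and follows essentially the same route as the paper: the two-sided bound $e^{-a}\leq\mathcal{Z}\leq e^{a}$, the mean value inequality combined with the isometric embedding for the local Lipschitz bound, and the same two-term splitting of $M_J-M_{J'}$ for the flux-Lipschitz bound, yielding the identical constants $\alpha(a)=e^{2a}$, $L(a)=ae^{2a}$, $\theta(a)=e^{2a}+e^{4a}$.
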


\begin{proof} Let $a>0$ and $J,J'\in E$ be such that $|J|,|J'|\leq a$. 
\begin{enumerate}
\item For any $m\in\MM$ it holds that $|J\cdot m|\leq a$ so since the exponential function on $\R$ is non decreasing it holds that: 
\[\e^{J\cdot m}\leq \e^{a}\,\,\,\text{and}\,\,\,\mathcal{Z}:=\int_\MM \e^{J\cdot m'}\dd m'\geq \e^{-a}.\]
From which we deduce that
\begin{equation}\label{boundJZ}\|M_J\|_{L^\infty(\MM)}\leq \e^{2a}=:\alpha(a).\end{equation}
\item Let $m_1,m_2\in\MM$. Using the mean-value inequality on the compact segment $[\e^{-a},\e^a]$ and the fact that $\MM$ is isometrically embedded into $E$, we obtain that:
\[|M_J(m_1)-M_J(m_2)|\leq\e^{2a}|J\cdot m_1-J\cdot m_2|\leq a\e^{2a}d(m_1,m_2)=:L(a)d(m_1,m_2).\]
\item Let $m\in\MM$. It holds that: 
\[|M_J(m)-M_{J'}(m)|\leq\frac{1}{\mathcal{Z}}|\e^{J\cdot m}-\e^{J'\cdot m}|+\frac{\e^{J'\cdot m}}{\mathcal{Z}\mathcal{Z'}}|\mathcal{Z}-\mathcal{Z'}|\]
where
\[\mathcal{Z}:=\int_\MM e^{J\cdot m}\dd m\,\,\,\text{and}\,\,\,\mathcal{Z'}=\int_\MM \e^{J'\cdot m}\dd m.\]
Using again the mean-value inequality and the bound \eqref{boundJZ}, we obtain: 
\[|M_J(m)-M_{J'}(m)|\leq (\e^{2a}+\e^{4a})|J-J'|=:\theta(a)|J-J'|.\]
\end{enumerate}
\end{proof}

As described in the introduction, for the Vicsek and Body-orientation cases, an agent interacts with its neighbours by sampling a new orientation from a von Mises distribution, the parameter of which reflects the local average orientation of the other agents. Two cases have to be considered depending on how ``local'' is the interaction. 

\begin{enumerate}
\item The first case corresponds to the interaction at the level of the IBM: the density of the agents is given by a probability measure $\mu$ on $\R^d\times\MM$ (typically the empirical distribution of the agents) and the interaction law for an agent at position $x\in\R^d$ is the von Mises distribution \eqref{vonmisesJ} with parameter $J$ equal to the flux of the observation measure $K*\mu(x)$, namely 
\[M_J\equiv M_{J_{K*\mu}(x)}\equiv M_{K*\mu}[x].\]
In this case, the regularity constants in Assumptions \ref{asslocallybounded}, \ref{assumptionlocallipschitz}, \ref{assumptionfluxlipschitz} are functions of $\|K\|_{L^\infty}$ since $|J_{K*\mu}(x)|\leq\|K\|_{L^\infty}$. 

\item The second case corresponds to the interaction at the kinetic level when we let $K\to\delta_0$ : the density of agents is given by a probability density function $f$ (typically the solution of a BGK equation) which we assume to be bounded in the $L^\infty$ norm by a constant $a>0$. The interaction law at position $x\in\R^d$ is then the von Mises distribution \eqref{vonmisesJ} with parameter $J$ equal to the flux of the purely local observation measure $f(x,\cdot)$ : 
\[M_J\equiv M_{J_{f(x,\cdot)}}\equiv M_f[x].\]
\end{enumerate}
\subsection{Individual Based Model}\label{sectionIBM}

In the abstract framework described in Subsection \ref{sectionencompass}, the main focus of this article will be the study of the following IBM.\\

An agent $i\in\{1,\ldots,N\}$ is described at time $t$ by a couple $Z^{i,N}_t = (X^{i,N}_t,m^{i,N}_t)\in\R^d\times\MM$ of position and orientation. The evolution of the trajectories are given by the following Piecewise Deterministic Markov Process (PDMP) :  
\begin{enumerate}[$\bullet$] 
\item Let $(S_n)_n$ be a sequence of independent holding times which follow an exponential law of parameter $N$ (their expectation is $1/N$). The jump times are denoted by $T_n :=~S_1+~\ldots+S_n$. We set $S_0=T_0=0$. 
\item Let $(I_n)_n$ a sequence of independent indexes which follow a uniform law on $\{1,\ldots,N\}$.
\item Between two jump times on $[T_n,T_{n+1})$, the system evolves deterministically: 
\begin{equation}\label{deterministicpart}\forall t\in[T_n,T_{n+1}),\,\,\forall i\in\{1,\ldots,N\},\,\,\,\,
\left\{
\begin{array}{rcl}
X^{i,N}_t &=& X^{i,N}_{T_n}+(t-T_n)\Phi\big(m^{i,N}_t\big)\\[0.2cm]
m^{i,N}_t&=&m^{i,N}_{T_n}
\end{array}
\right.
.\end{equation}
\item At $T_{n+1}$ a jump occurs for the agent $I_n$ which draws a new orientation according to the interaction law: 
\begin{equation}\label{jumppart}m^{I_n,N}_{T_{n+1}} \sim M_{K*\hat{\mu}^N_{T_{n+1}^-}}\Big[X^{I_n,N}_{T_{n+1}^-}\Big],\end{equation}
where 
\[\hat{\mu}^N_t:=\frac{1}{N}\sum_{i=1}^N \delta_{(X^{i,N}_t,m^{i,N}_t)}\]
is the empirical measure at time $t$ and 
\[Z^{i,N}_{T_{n+1}^-} = \Big(X^{i,N}_{T_{n+1}^-},m^{i,N}_{T_{n+1}^-}\Big) := \Big(X^{i,N}_{T_n}+S_{n+1}\Phi\big(m^{i,N}_{T_n}\big),m^{i,N}_{T_n}\Big)\in\R^d\times\mathscr{M}.\]
\end{enumerate}
%Such process is well-defined since $T_n\to+\infty$ almost surely. 

\subsection{Kinetic model: the BGK equation. Well-posedness results.}\label{sectionkinetic}

The kinetic model associated to the IBM described in Subsection \ref{sectionIBM} is given by the following BGK equation (see Theorem \ref{propagationofchaostheorem} below for more details on how the IBM is related to the kinetic model): 
\begin{equation}\label{bgkkernel}\tag{BGK-$K$}
\partial_t f+\Phi(m)\cdot\nabla_xf=\rho_f M_{K*f}[x]-f\end{equation}
where $f=f_t(\dd x,\dd m)$ is a time dependent probability measure on $\R^d\times\mathscr{M}$ and $\rho_{f_t}(\dd x)$ is the first marginal of $f_t$, i.e. $\rho_{f_t}(\dd x):=\int_{\mathscr{M}}f_t(\dd x,\dd m)$. For the sake of clarity, in the following we will write: 
\[G^K_{f_t}(\dd x,\dd m)\equiv \rho_{f_t}(\dd x)M_{K*f_t}[x](m)\dd m.\]
When $f_t$ has a density, i.e. $f_t\in\widetilde{\mathcal{P}}(\R^d\times \mathscr{M})$, we will write with a slight abuse of notations:
\[G^K_{f_t}(x,m)\equiv \rho_{f_t}(x)M_{K*f_t}[x](m)\]
which is a probability density function on $\R^d\times \mathscr{M}$. 
\begin{lemma}\label{GKbounds}
Let $f,g\in \widetilde{\mathcal{P}}(\R^d\times \MM)$, then it holds that
\begin{equation}\label{GKlipschitz}
\|G^K_f-G_g^K\|_{L^1(\R^d\times\MM)}\leq \Big(\alpha\big(\|K\|_{L^\infty}\big)+\|K\|_{L^\infty}\theta\big(\|K\|_{L^\infty}\big)\Big)\|f-g\|_{L^1(\R^d\times\MM)}
\end{equation}
\end{lemma}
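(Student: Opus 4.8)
The plan is to estimate $\|G^K_f - G^K_g\|_{L^1}$ by adding and subtracting a suitable intermediate term, splitting the difference into a part that controls the change in the spatial density $\rho$ and a part that controls the change in the von Mises interaction law. Concretely, I would write
\[
G^K_f(x,m) - G^K_g(x,m) = \rho_f(x)\big(M_{K*f}[x](m) - M_{K*g}[x](m)\big) + \big(\rho_f(x)-\rho_g(x)\big) M_{K*g}[x](m),
\]
and integrate the absolute value over $\R^d\times\MM$, using the triangle inequality to treat the two terms separately.

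For the second term, I would integrate first in $m$: since $M_{K*g}[x]$ is a probability density on $\MM$, $\int_\MM M_{K*g}[x](m)\,\dd m = 1$, so this term contributes exactly $\|\rho_f-\rho_g\|_{L^1(\R^d)}$, which is bounded by $\|f-g\|_{L^1(\R^d\times\MM)}$ since $\rho_f-\rho_g$ is the $m$-marginal of $f-g$. For the first term, I would integrate first in $x$: since $\rho_f$ is a probability density on $\R^d$, $\int_{\R^d}\rho_f(x)\,\dd x = 1$, so this term is bounded by $\sup_{x\in\R^d}\|M_{K*f}[x] - M_{K*g}[x]\|_{L^1(\MM)}$. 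Now $\|M_{K*f}[x] - M_{K*g}[x]\|_{L^1(\MM)} \leq \|M_{K*f}[x] - M_{K*g}[x]\|_{L^\infty(\MM)}$ (using that $\MM$ has normalised volume), and the flux-Lipschitz Assumption \ref{assumptionfluxlipschitz} bounds this by $\theta(a)\,|J_{K*f}(x) - J_{K*g}(x)|$ where $a$ can be taken to be $\|K\|_{L^\infty}$, since $|J_{K*f}(x)| = |\iint K(x-y)m\,\dd f(y,m)| \leq \|K\|_{L^\infty}$ (using $|m|\leq 1$ and that $f$ is a probability measure). Finally $|J_{K*f}(x) - J_{K*g}(x)| = |\iint K(x-y)m\,(\dd f - \dd g)(y,m)| \leq \|K\|_{L^\infty}\,\|f-g\|_{L^1(\R^d\times\MM)}$, again because $|m|\leq 1$. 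Combining the two contributions gives the claimed constant $\alpha(\|K\|_{L^\infty}) + \|K\|_{L^\infty}\theta(\|K\|_{L^\infty})$.

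There is no real obstacle here; the only points requiring a little care are (i) getting the a priori bound $|J_{K*f}(x)|\leq\|K\|_{L^\infty}$ right so that the regularity constants are evaluated at $a = \|K\|_{L^\infty}$ (this uses the normalisation $\int K = 1$ is \emph{not} needed, only $K\geq 0$ and $\int K = 1$ gives $|J_{K*f}(x)| \leq \|K\|_{L^\infty}\int K = \|K\|_{L^\infty}$ — actually one just needs $\|K\|_{L^\infty}$ as the bound, and the normalisation ensures the measure $K(x-\cdot)\,\dd f$ has mass $\leq \|K\|_{L^\infty}$), and (ii) justifying that one may integrate in either order (Tonelli, since everything is nonnegative). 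The role of $\alpha(\|K\|_{L^\infty})$ in the statement's constant looks slightly generous — the argument above in fact gives coefficient $1$ on $\|\rho_f-\rho_g\|_{L^1}$ — but since $\alpha(a)\geq 1$ always (a probability density on a normalised manifold has $L^\infty$ norm at least $1$), writing the bound with $\alpha(\|K\|_{L^\infty})$ in place of $1$ is harmless and presumably chosen for uniformity with later estimates.
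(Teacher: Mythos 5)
Your proof is correct and follows essentially the same route as the paper: split the product difference into a $\rho$-part and an $M$-part, bound the former using that the interaction law is a probability density, and the latter using the flux-Lipschitz assumption together with $|J_{K*f}(x)|\leq\|K\|_{L^\infty}$ and $|J_{K*f}(x)-J_{K*g}(x)|\leq\|K\|_{L^\infty}\|f-g\|_{L^1}$. The only (harmless) differences are that you use the mirror-image intermediate term $\rho_f M_{K*g}$ rather than $\rho_g M_{K*f}$, and that integrating $M_{K*g}[x]$ out in $m$ gives you coefficient $1$ where the paper's cruder $L^\infty$ bound gives $\alpha(\|K\|_{L^\infty})\geq 1$, so your estimate is in fact slightly sharper and implies the stated one.
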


\begin{proof} Since the $G^K_f$ and $G^K_g$ are a product of two quantities, the $L^1$ norm of the difference can be split in two parts:
\begin{align*}
&\iint_{\R^d\times \mathscr{M}} |G^K_{f}-G^K_{g}|(x,m)\,\dd x\dd m = \iint_{\R^d\times \MM} |\rho_f(x)M_{K*f}[x](m)-\rho_g(x)M_{K*g}[x](m)|\dd x\dd m \\
&\leq \iint_{\R^d\times\MM} |\rho_f(x)-\rho_g(x)|M_{K*f}[x](m)\dd x\dd m \\
&\hspace{3cm}+ \iint_{\R^d\times\MM} \rho_g(x)|M_{K*f}[x](m)-M_{K*g}[x](m)|\dd x\dd m.
\end{align*}
Since for all $x\in\R^d$, $|J_{K*f}(x)|\leq \|K\|_{L^\infty}$, the first integral on the right-hand side can be bounded by 
\[\alpha(\|K\|_{L^\infty})\iint_{\R^d\times \mathscr{M}} |\rho_{f}-\rho_{g}|\dd x \dd m\]
using Assumption \ref{asslocallybounded}. For the second integral on the right-hand side, using \eqref{fluxlipschitz}, it holds that: 
\begin{multline*}\iint_{\R^d\times\MM} \rho_g(x)|M_{K*f}[x](m)-M_{K*g}[x](m)|\dd x\dd m\\ 
\leq \theta(\|K\|_{L^\infty})\iint_{\R^d\times\MM}\rho_g(x) |J_{K*f}(x)-J_{K*g}(x)|\dd x\dd m\\=\theta(\|K\|_{L^\infty})\int_{\R^d}\rho_g(x) |J_{K*f}(x)-J_{K*g}(x)|\dd x\end{multline*}
where we have used that $\int_\MM \dd m=1$. Then, we note that $|m|\leq 1$ so the previous bound gives: 
\begin{multline*}\iint_{\R^d\times\MM} \rho_g(x)|M_{K*f}[x](m)-M_{K*g}[x](m)|\dd x\dd m\\ \leq \theta(\|K\|_{L^\infty})\iint_{\R^d\times \MM} \rho_g(x)|K*f(x,m)-K*g(x,m)|\dd x\dd m.\end{multline*}
By Definition \ref{observationmeasure} of the observation measure, we obtain:
\begin{multline*}\iint_{\R^d\times\MM} \rho_g(x)|M_{K*f}[x](m)-M_{K*g}[x](m)|\dd x\dd m\\ \leq \theta(\|K\|_{L^\infty})\iint_{\R^d\times \MM} \rho_g(x)K(x-y)|f(y,m)-g(y,m)|\dd x\dd y\dd m.\end{multline*}
 To conclude, we use the fact that $g$ is a probability measure so that the last integral on the right-hand side can be bounded by $\|K\|_{L^\infty}\|f-g\|_{L^1(\R^d\times\MM)}$. The result follows.
 \end{proof}

A mild-solution to \eqref{bgkkernel} is defined as an element $f\in C\big([0,T],\mathcal{P}(\R^d\times\mathscr{M})\big)$ which satisfies for all $\varphi\in C_b(\R^d\times\mathscr{M})$: 
\begin{equation}\label{mildsolution}\langle\varphi,f_t\rangle=\e^{-t}\langle\varphi,\mathsf{T}_tf_0\rangle+\int_0^t \e^{-(t-s)}\langle \varphi,\mathsf{T}_{t-s}G^K_{f_s}\rangle\,\dd s,\end{equation}
where $\mathsf{T}_t$ is the free-transport operator: 
\begin{equation}\label{freetransportoperator}\langle \varphi,\mathsf{T}_t\mu\rangle:=\iint_{\R^d\times \mathscr{M}}\varphi(x+t\Phi(m),m)\mu(\dd x,\dd m).\end{equation}
Similarly, in the degenerate case $K=\delta_0$, we consider
\begin{equation}\label{bgkdelta}\tag{BGK-$\delta$}
\partial_t f+(\Phi(m)\cdot\nabla_x)f=\rho_f M_{f}[x]-f\end{equation}
where $f=f_t(x,m)$ is a time dependent probability density in $L^\infty(\R^d\times\mathscr{M})$ and $\rho_{f_t}(\dd x)$ is the first marginal of $f_t$. For the sake of clarity, in the following we will write: 
\[G_{f_t}(\dd x,\dd m)\equiv \rho_{f_t}(\dd x)M_{f_t}[x](\dd m).\]

\begin{lemma}
Let $a>0$ and let $f,g\in L^\infty\cap\widetilde{\mathcal{P}}(\R^d\times\MM)$ be two probability density functions such that $\|f\|_{L^\infty},\|g\|_{L^\infty}\leq a$. Then 
\begin{equation}\label{Glipschitzinfty}
\|G_f-G_g\|_{L^\infty(\R^d\times\MM)}\leq (\alpha(a)+a\theta(a))\|f-g\|_{L^\infty(\R^d\times \MM)}\end{equation}
and
\begin{equation}\label{Glipschitzl1}
\|G_f-G_g\|_{L^1(\R^d\times\MM)}\leq (\alpha(a)+a\theta(a))\|f-g\|_{L^1(\R^d\times \MM)}\end{equation}
\end{lemma}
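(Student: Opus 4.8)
The plan is to follow the proof of Lemma~\ref{GKbounds} essentially verbatim, replacing the observation measure $K*f(x)$ by the purely local density $f(x,\cdot)$ everywhere. The starting point is the pointwise identity
\[
G_f(x,m)-G_g(x,m)=\big(\rho_f(x)-\rho_g(x)\big)\,M_f[x](m)+\rho_g(x)\,\big(M_f[x](m)-M_g[x](m)\big),
\]
obtained by adding and subtracting $\rho_g(x)M_f[x](m)$, after which the two terms are estimated separately.

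First I would record the elementary bounds that make Assumptions~\ref{asslocallybounded} and~\ref{assumptionfluxlipschitz} applicable. Since the volume form on $\MM$ is normalised and $|m|\le 1$ for all $m\in\MM$, the hypotheses $\|f\|_{L^\infty},\|g\|_{L^\infty}\le a$ force $\rho_f(x),\rho_g(x)\le a$ and hence $|J_{f(x,\cdot)}|,|J_{g(x,\cdot)}|\le a$ for every $x\in\R^d$; moreover both $|\rho_f(x)-\rho_g(x)|$ and $|J_{f(x,\cdot)}-J_{g(x,\cdot)}|$ are bounded by $\int_\MM|f(x,m)-g(x,m)|\,\dd m$. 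With these in hand, Assumption~\ref{asslocallybounded} yields $\|M_f[x]\|_{L^\infty(\MM)}\le\alpha(a)$, which is applied to the first term, and Assumption~\ref{assumptionfluxlipschitz} yields $|M_f[x](m)-M_g[x](m)|\le\theta(a)\,|J_{f(x,\cdot)}-J_{g(x,\cdot)}|$, which is applied to the second.

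For the $L^\infty$ bound \eqref{Glipschitzinfty} I would estimate both terms pointwise in $(x,m)$: the first by $\alpha(a)\,|\rho_f(x)-\rho_g(x)|\le\alpha(a)\|f-g\|_{L^\infty}$ (the integral over the unit-mass manifold $\MM$ of $|f-g|$ being at most its sup norm), and the second by $\rho_g(x)\,\theta(a)\,|J_{f(x,\cdot)}-J_{g(x,\cdot)}|\le a\,\theta(a)\|f-g\|_{L^\infty}$; summing gives the constant $\alpha(a)+a\theta(a)$. For the $L^1$ bound \eqref{Glipschitzl1} I would instead integrate the decomposition over $\R^d\times\MM$. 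Using Assumption~\ref{asslocallybounded} together with $\int_\MM\dd m=1$, the first term contributes at most $\alpha(a)\int_{\R^d}|\rho_f-\rho_g|\,\dd x\le\alpha(a)\|f-g\|_{L^1}$; for the second, applying the flux-Lipschitz bound and integrating out $m$ via $\int_\MM\dd m=1$ leaves $\theta(a)\int_{\R^d}\rho_g(x)\,|J_{f(x,\cdot)}-J_{g(x,\cdot)}|\,\dd x\le a\,\theta(a)\int_{\R^d}\int_\MM|f(x,m)-g(x,m)|\,\dd m\,\dd x=a\,\theta(a)\|f-g\|_{L^1}$. Adding the two contributions gives \eqref{Glipschitzl1}.

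The argument is essentially bookkeeping, and I do not expect a genuine obstacle; the one point that deserves attention is checking that every flux occurring in the estimates lies in the ball of radius $a$, so that the structural assumptions on $M_J$ can all be invoked with the single argument $a$. This is precisely where the normalisation $\int_\MM\dd m=1$ and the embedding bound $|m|\le1$ are used, and it is also what makes the factor $\rho_g(x)\le a$ appear in front of $\theta(a)$, in contrast with the $\|K\|_{L^\infty}$ of Lemma~\ref{GKbounds}.
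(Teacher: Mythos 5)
Your proof is correct and follows exactly the route the paper intends: the paper's own proof is a one-line remark that the result ``follows as before'' from the decomposition of Lemma \ref{GKbounds}, using $|J_f(x)|\le a$ and $|J_f(x)-J_g(x)|\le\|f-g\|_{L^\infty}$, and your argument is precisely that decomposition carried out in detail with the observation measure replaced by the local density. The points you flag as needing care (all fluxes lying in the ball of radius $a$, via $\int_\MM\dd m=1$ and $|m|\le 1$, and the factor $\rho_g(x)\le a$ replacing $\|K\|_{L^\infty}$) are exactly the ones the paper singles out.
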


\begin{proof}
The result follows as before from \eqref{assumptionlocallybounded} and \eqref{fluxlipschitz} by noticing that for all $x\in\R^d$, $|J_f(x)|\leq a$ and $|J_f(x)-J_g(x)|\leq \|f-g\|_{L^\infty}$.
\end{proof}

The well-posedness of \eqref{bgkkernel} and \eqref{bgkdelta} is given by the two following propositions. They are based on Duhamel's formula and on a fixed point argument.

\begin{proposition}[Well-posedness of \eqref{bgkkernel}]\label{wellposednesskernel}
For all $f_0\in \widetilde{\mathcal{P}}(\R^d\times \mathscr{M})$ and all $T>0$, there exists a unique solution of \eqref{bgkkernel} in $C\big([0,T],\widetilde{\mathcal{P}}(\R^d\times \mathscr{M})\big)$ with initial condition $f_0$. Moreover if $t\mapsto f^1_t$ and $t\mapsto f^2_t$ are two solutions of \eqref{bgkkernel} with respective initial conditions $f^1_0\in \widetilde{\mathcal{P}}(\R^d\times \mathscr{M})$ and $f^2_0\in \widetilde{\mathcal{P}}(\R^d\times \mathscr{M})$ then 
\[\sup_{t\in[0,T]}\|f^1_t-f^2_t\|_{L^1(\R^d\times \mathscr{M})}\leq\|f^1_0-f^2_0\|_{L^1(\R^d\times \mathscr{M})}\e^{\big(\alpha\big(\|K\|_{L^\infty}\big)+\|K\|_{L^\infty}\theta\big(\|K\|_{L^\infty}\big)\big)T}.\]
\end{proposition}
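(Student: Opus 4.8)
The plan is to prove existence and uniqueness by a classical Picard--Banach fixed point argument on the mild formulation \eqref{mildsolution}, exactly as announced before the statement. First I would fix $T>0$ and $f_0\in\widetilde{\mathcal{P}}(\R^d\times\MM)$, and work in the complete metric space $X_T:=C\big([0,T],\widetilde{\mathcal{P}}(\R^d\times\MM)\big)$ equipped with the metric $\dd_T(f,g):=\sup_{t\in[0,T]}\|f_t-g_t\|_{L^1(\R^d\times\MM)}$ (one should check that $\widetilde{\mathcal{P}}$ is closed for the $L^1$ distance, which is immediate since an $L^1$-limit of probability densities is a probability density). On $X_T$ I would define the map $\Gamma$ by
\[
(\Gamma f)_t := \e^{-t}\mathsf{T}_tf_0+\int_0^t\e^{-(t-s)}\mathsf{T}_{t-s}G^K_{f_s}\,\dd s,
\]
and note that $\Gamma$ preserves $X_T$: indeed $\mathsf{T}_t$ is a measure-preserving push-forward (it is the push-forward by the diffeomorphism $(x,m)\mapsto(x+t\Phi(m),m)$), it maps absolutely continuous measures to absolutely continuous measures, $G^K_{f_s}$ is a probability density by construction (it equals $\rho_{f_s}(x)M_{K*f_s}[x](m)$, a product of a marginal density with a family of probability densities on $\MM$), and the right-hand side is a convex-type combination $\e^{-t}+\int_0^t\e^{-(t-s)}\dd s=1$ of probability densities, hence again a probability density; continuity in $t$ follows from strong continuity of $s\mapsto\mathsf{T}_s$ on $L^1$ and dominated convergence.

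The core estimate is the contraction bound. For $f,g\in X_T$, since $\mathsf{T}_{t-s}$ is an $L^1$-isometry,
\[
\|(\Gamma f)_t-(\Gamma g)_t\|_{L^1}\leq\int_0^t\e^{-(t-s)}\|G^K_{f_s}-G^K_{g_s}\|_{L^1}\,\dd s
\leq C_K\int_0^t\|f_s-g_s\|_{L^1}\,\dd s,
\]
where $C_K:=\alpha\big(\|K\|_{L^\infty}\big)+\|K\|_{L^\infty}\theta\big(\|K\|_{L^\infty}\big)$ by Lemma \ref{GKbounds}. Iterating $\Gamma$ gives $\|(\Gamma^n f)_t-(\Gamma^n g)_t\|_{L^1}\leq \frac{(C_KT)^n}{n!}\dd_T(f,g)$, so some power $\Gamma^n$ is a strict contraction on $X_T$; by the Banach fixed point theorem $\Gamma$ has a unique fixed point $f\in X_T$, which is the unique mild solution of \eqref{bgkkernel} with initial datum $f_0$. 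Uniqueness in $C\big([0,T],\widetilde{\mathcal{P}}\big)$ is part of this conclusion, since any mild solution is a fixed point of $\Gamma$.

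Finally, the stability estimate follows from the same $L^1$ bound applied to two solutions $f^1,f^2$ with data $f^1_0,f^2_0$: using $\mathsf{T}_t$-isometry on the first term and Lemma \ref{GKbounds} on the integral term,
\[
\|f^1_t-f^2_t\|_{L^1}\leq\|f^1_0-f^2_0\|_{L^1}+C_K\int_0^t\|f^1_s-f^2_s\|_{L^1}\,\dd s,
\]
and Gr\"onwall's lemma yields $\sup_{t\in[0,T]}\|f^1_t-f^2_t\|_{L^1}\leq\|f^1_0-f^2_0\|_{L^1}\e^{C_KT}$, which is exactly the claimed inequality. I expect the only mildly delicate point to be the bookkeeping showing that $\Gamma$ genuinely maps $\widetilde{\mathcal{P}}$ into itself — in particular that $G^K_{f_s}$ is a bona fide probability density (total mass one and nonnegativity, which hold because $M_{K*f_s}[x]$ is a probability density on $\MM$ for every $x$ and $\int_\MM\dd m=1$) and that the free transport push-forward preserves both the total mass and absolute continuity; everything else is the standard Duhamel/Picard machinery.
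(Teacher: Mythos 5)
Your proposal is correct and follows essentially the same route as the paper: the Duhamel/mild formulation, the $L^1$-isometry of the free transport, the Lipschitz bound of Lemma \ref{GKbounds} to show that an iterate of the Duhamel map is a contraction, and Gr\"onwall for the stability estimate. The only difference is that you spell out the (implicit in the paper) verification that the map preserves $\widetilde{\mathcal{P}}(\R^d\times\MM)$, which is a welcome addition rather than a deviation.
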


\begin{proof}
Let $f_0\in \widetilde{\mathcal{P}}(\R^d\times \mathscr{M})$ and $T>0$ and let us define the map: 
\[\mathcal{T}:C\big([0,T],\widetilde{\mathcal{P}}(\R^d\times \mathscr{M})\big)\to C\big([0,T],\widetilde{\mathcal{P}}(\R^d\times \mathscr{M})\big)\]
by 
\[\mathcal{T}(g)(t,x,m):=\e^{-t}f_0(x-t\Phi(m),m)+\int_0^t \e^{-(t-s)}G^K_{g_s}\big(x-(t-s)\Phi(m),m\big)\dd s.\]
We prove that $\mathcal{T}$ is a contraction. Let $g^1,g^2\in C\big([0,T],\widetilde{\mathcal{P}}(\R^d\times \mathscr{M})\big)$. One has: 
\[\iint_{\R^d\times \mathscr{M}} |\mathcal{T}(g^1)-\mathcal{T}(g^2)|(t,x,m)\,\dd x\dd m\leq \int_0^t \e^{-(t-s)} \iint_{\R^d\times \mathscr{M}} |G^K_{g^1_s}-G^K_{g^2_s}|(x,m)\,\dd x\dd m\,\dd s.\]
The Lipschitz bound \eqref{GKlipschitz} leads to:
\[\sup_{t\in[0,T]}\|\mathcal{T}(g^1)(t)-\mathcal{T}(g^2)(t)\|_{L^1}\leq \Big(\alpha\big(\|K\|_{L^\infty}\big)+\|K\|_{L^\infty}\theta\big(\|K\|_{L^\infty}\big)\Big)\int_0^T \sup_{u\in[0,s]}\|g^1_u-g^2_u\|_{L^1}\,\dd s,\]
which proves that an iteration of $\mathcal{T}$ is a contraction and therefore $\mathcal{T}$ has a unique fixed point. This fixed point is a (mild) solution of \eqref{bgkkernel}. The stability estimate follows similarly by Gronwall lemma. 
\end{proof}

\begin{rem}
The well-posedness result Proposition \ref{wellposednesskernel} is stated in an absolutely continuous framework (i.e. for probability density functions rather than probability measures) and a solution is defined as a fixed point of the map $\mathcal{T}$. The associated probability measure (i.e $f_t(x,m)\dd x\dd m$) is a mild solution of \eqref{bgkkernel} as defined by \eqref{mildsolution}. However, a straightforward adaptation of this proof actually gives the well-posedness in $C\big([0,T],{\mathcal{P}}(\R^d\times \mathscr{M})\big)$ where ${\mathcal{P}}(\R^d\times \mathscr{M})$ is endowed with the total variation norm (which is the $L^1$ norm for probability density functions). The result is then similar to the one obtained in \cite[Proposition 2]{monmarche} (by probabilistic coupling arguments) or in \cite{canizo2018asymptotic} (by Duhamel's formula and a fixed point argument). In the following, only the absolutely continuous framework will be considered. 
\end{rem}

For the well-posedness of \eqref{bgkdelta}, we consider the space $L^\infty\cap\widetilde{\mathcal{P}}(\R^d\times \mathscr{M})$ which is a closed subspace of $L^\infty\cap L^1(\R^d\times \MM)$ and therefore complete for the norm $\|\cdot\|_{L^\infty}+\|\cdot\|_{L^1}$ which will be denoted for short by $\|\cdot\|_{L^1\cap L^\infty}$. 

\begin{proposition}[Well-posedness of \eqref{bgkdelta}]\label{wellposednessdelta} Let $f_0\in L^\infty\cap\widetilde{\mathcal{P}}(\R^d\times \mathscr{M})$ such that $\|f_0\|_{L^\infty}< a$ for a given $a>0$. Then there exists a time $T>0$ and a unique solution of \eqref{bgkdelta} in $C\big([0,T],L^\infty\cap\widetilde{\mathcal{P}}(\R^d\times \mathscr{M})\big)$ with initial condition $f_0$. Moreover if $t\mapsto f^1_t$ and $t\mapsto f^2_t$ are two solutions of \eqref{bgkdelta} with respective initial conditions $f^1_0$ and $f^2_0$ such that $\|f^1_0\|_{L^\infty},\|f^2_0\|_{L^\infty}< a$ for the same $a>0$ then 
\[\sup_{t\in[0,T]}\|f^1_t-f^2_t\|_{L^1\cap L^\infty}\leq\|f^1_0-f^2_0\|_{L^1\cap L^\infty}\e^{c(T)}\]
where $c(T)$ is a constant which depends only on $T$. 
\end{proposition}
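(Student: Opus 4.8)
The plan is to reproduce the Duhamel/fixed-point scheme used for \eqref{bgkkernel} in Proposition \ref{wellposednesskernel}, the crucial new feature being that the Lipschitz estimates \eqref{Glipschitzinfty}--\eqref{Glipschitzl1} for the map $f\mapsto G_f$ are only available once an $L^\infty$ bound on the argument is known; this is exactly why one can only expect a \emph{local}-in-time solution and must propagate an a priori $L^\infty$ bound. Concretely, $f$ solves \eqref{bgkdelta} (in the mild sense, analogous to \eqref{mildsolution}) if and only if it is a fixed point of
\[\mathcal{T}(g)(t,x,m):=\e^{-t}f_0\big(x-t\Phi(m),m\big)+\int_0^t\e^{-(t-s)}G_{g_s}\big(x-(t-s)\Phi(m),m\big)\,\dd s,\]
where $G_{g_s}(x,m)=\rho_{g_s}(x)M_{g_s}[x](m)$. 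Note that $\MM$ being compact, $\Phi$ is bounded, and for every fixed $t$ the map $(x,m)\mapsto h(x-t\Phi(m),m)$ is an isometry of both $L^1(\R^d\times\MM)$ and $L^\infty(\R^d\times\MM)$ (translation invariance of Lebesgue measure in $x$, fibrewise in $m$), which will make the Duhamel estimates below clean.

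Fix $a':=2a$ and let $\mathcal{X}$ be the set of $g\in C\big([0,T],L^1(\R^d\times\MM)\big)$ with $g_t\in\widetilde{\mathcal{P}}(\R^d\times\MM)$ and $\sup_{t\in[0,T]}\|g_t\|_{L^\infty}\le a'$, endowed with the distance $\sup_t\|g_t-h_t\|_{L^1}$. Since the $L^\infty$-ball is closed under $L^1$-convergence (pass to an a.e.\ convergent subsequence) and $\widetilde{\mathcal{P}}$ is $L^1$-closed, $\mathcal X$ is a closed subset of the Banach space $C([0,T],L^1)$, hence complete. I would then check that $\mathcal T$ maps $\mathcal X$ into itself for $T=T(a)$ small enough: (i) $\mathcal T(g)(t,\cdot)$ is a probability density — nonnegativity is clear, and integrating in $(x,m)$ while using $\int_\MM M_{g_s}[x](m)\,\dd m=1$ gives $\int G_{g_s}=\int\rho_{g_s}=1$, so the total mass of $\mathcal T(g)(t,\cdot)$ is $\e^{-t}+\int_0^t\e^{-(t-s)}\,\dd s=1$; (ii) the $L^\infty$ constraint is preserved — from $|J_{g_s}(x)|\le\int_\MM|m|g_s(x,m)\,\dd m\le\rho_{g_s}(x)\le\|g_s\|_{L^\infty}\le a'$ and Assumption \ref{asslocallybounded} one gets $\|G_{g_s}\|_{L^\infty}\le a'\alpha(a')$, hence $\|\mathcal T(g)(t)\|_{L^\infty}< a+(1-\e^{-t})\,a'\alpha(a')\le a'$ for all $t\le T$ once $(1-\e^{-T})\,a'\alpha(a')\le a'-a$ (possible since $\|f_0\|_{L^\infty}<a<a'$); (iii) $t\mapsto\mathcal T(g)(t)$ is $L^1$-continuous, by strong continuity of translations on $L^1$ for the free term and dominated convergence for the integral term.

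For the contraction, writing $C_{a'}:=\alpha(a')+a'\theta(a')$, the $L^1$-Lipschitz bound \eqref{Glipschitzl1} applied with constant $a'$ together with the Duhamel formula gives
\[\sup_{t\le T}\|\mathcal T(g^1)(t)-\mathcal T(g^2)(t)\|_{L^1}\le C_{a'}\int_0^T\sup_{u\le s}\|g^1_u-g^2_u\|_{L^1}\,\dd s,\]
so that $\mathcal T^{\,n}$ is $\big((C_{a'}T)^n/n!\big)$-Lipschitz on $\mathcal X$; for $n$ large this is a strict contraction, and the Banach fixed point theorem provides the unique $f\in\mathcal X$ solving \eqref{bgkdelta} on $[0,T]$. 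For the stability estimate I would first observe that any solution in $C([0,T],L^\infty\cap\widetilde{\mathcal P})$ with $\|f_0\|_{L^\infty}<a$ must, by a bootstrap on its $L^\infty$ norm as in (ii), remain in the ball of radius $a'$ on $[0,T]$ (hence lies in $\mathcal X$ and coincides with the constructed solution); then, for two such solutions $f^1,f^2$, using the isometry property of the transport and \eqref{Glipschitzl1}, \eqref{Glipschitzinfty} with constant $a'$, the Duhamel representation yields, for $\ast\in\{L^1,L^\infty\}$,
\[\|f^1_t-f^2_t\|_{\ast}\le\|f^1_0-f^2_0\|_{\ast}+C_{a'}\int_0^t\|f^1_s-f^2_s\|_{\ast}\,\dd s,\]
and Gronwall's lemma, applied to the sum of the two quantities, gives the claimed bound with $c(T):=C_{2a}\,T$.

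The main obstacle is step (ii): the scheme is intrinsically local in time because $\|G_{g_s}\|_{L^\infty}$ can only be controlled by $a'\alpha(a')$, which exceeds the radius $a'$ of the ball one wishes to preserve, so invariance holds only on a short interval; a minor but genuine point of care is that, since translations are not strongly continuous on $L^\infty$, one contracts in the $L^1$ metric while carrying the $L^\infty$ bound as a closed constraint on the solution space, rather than working in $L^1\cap L^\infty$ directly.
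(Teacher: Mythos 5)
Your proposal is correct and follows essentially the same route as the paper: Duhamel's formula plus a Banach fixed point on an $L^\infty$-ball, with $T$ chosen small enough that the bound $\|G_{g_s}\|_{L^\infty}\leq a'\alpha(a')$ keeps the ball invariant, the contraction supplied by \eqref{Glipschitzl1}--\eqref{Glipschitzinfty}, and the stability estimate by Gronwall. The only deviations are cosmetic (working in the ball of radius $2a$ rather than $a$, and contracting in the $L^1$ metric while carrying the $L^\infty$ bound as a closed constraint instead of contracting in $\|\cdot\|_{L^1\cap L^\infty}$ directly); the latter is in fact a sensible refinement, since $t\mapsto\mathsf{T}_tf_0$ need not be $L^\infty$-continuous for general $f_0\in L^\infty$.
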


\begin{proof}
Let $f_0\in L^\infty\cap\widetilde{\mathcal{P}}(\R^d\times \mathscr{M})$. Let $a>0$ such that $\|f_0\|_{L^\infty}< a$. Let $B_a$ the ball of radius $a$ in $C\Big([0,T],L^\infty\cap\widetilde{\mathcal{P}}(\R^d\times \mathscr{M})\Big)$ where $T>0$ will be specified later and for the norm $\|h\|=\sup_{t\in[0,T]}\|h(t)\|_{L^\infty}$. We consider the map: 
\[\mathcal{T}:B_a\to B_a\]
defined by: 
\[\mathcal{T}(g)(t,x,m):=\e^{-t}f_0(x-t\Phi(m),m)+\int_0^t \e^{-(t-s)}G_{g_s}\big(x-(t-s)\Phi(m),m\big)\dd s.\]
Using the bound \eqref{assumptionlocallybounded}, one has for $g\in B_a$: 
\[|\mathcal{T}(g)(t,x,m)|\leq \e^{-t}\|f_0\|_{L^\infty}+(1-\e^{-t})a\alpha(a)\]
and the map $\mathcal{T}$ is therefore well defined for $T>0$ small enough to ensure that for all $t\in[0,T]$: 
\[\e^{-t}\|f_0\|_{L^\infty}+(1-\e^{-t})a\alpha(a)\leq a.\]
Namely $\mathcal{T}$ is well defined for 
\[T\leq\log\left(\frac{a\alpha(a)-a}{a\alpha(a)-\|f_0\|_{L^\infty}}\right)\]
where we can assume without loss of generality that $\alpha(a)\geq 1$. We prove that $\mathcal{T}$ is a contraction. Let $g^1,g^2\in B_a$. One has: 
\[|\mathcal{T}(g^1)-\mathcal{T}(g^2)|(t,x,m)\leq\int_0^t \e^{-(t-s)} \big|G_{g^1_s}-G_{g^2_s}\big|\big(x-(t-s)\Phi(m),m\big)\dd s.\]
Using the Lipschitz bounds \eqref{Glipschitzinfty} and \eqref{Glipschitzl1} one has: 
\[\sup_{t\in[0,T]}\|\mathcal{T}(g^1)(t)-\mathcal{T}(g^2)(t)\|_{L^\infty}\leq (\alpha(a)+a\theta(a))\int_0^T \|g^1_s-g^2_s\|_{L^\infty}\dd s,\]
and
\[\sup_{t\in[0,T]}\|\mathcal{T}(g^1)(t)-\mathcal{T}(g^2)(t)\|_{L^1}\leq (\alpha(a)+a\theta(a))\int_0^T \|g^1_s-g^2_s\|_{L^1}\dd s,\]
and therefore, an iteration of $\mathcal{T}$ is a contraction and therefore $\mathcal{T}$ has a unique fixed point which is a solution of \eqref{bgkdelta}. The stability estimate follows similarly by Gronwall lemma. 
\end{proof}

\subsection{Main results}\label{sectionmainresults}

Let  $Z^{i,N}_t=(X^{i,N}_t,m^{i,N}_t)\in\R^d\times\MM$, $i\in\{1,\ldots,N\}$ be a system of PDMP defined by the IBM described in Subsection \ref{sectionIBM}. 

\begin{theorem}[Propagation of chaos]\label{propagationofchaostheorem} Let $f_0\in \widetilde{\mathcal{P}}(\R^d\times\MM)$. Let $f_t$ be the solution of \eqref{bgkkernel} at time $t>0$ with initial condition $f_0$. Assume that initially the agents are independent and identically distributed with respect to the law $f_0$.
\begin{enumerate}
\item For all $t>0$, it holds that:
\begin{equation}\label{propagationofchaoshatmu}\E[W^1(\hat{\mu}^N_t,f_t)]\underset{N\to+\infty}{\longrightarrow}0\end{equation}
where 
\[\hat{\mu}^N_t:=\frac{1}{N}\sum_{i=1}^N \delta_{(X^{i,N}_t,m^{i,N}_t)}.\]
\item Let $\mu^N_t\in\mathcal{P}(\R^d\times\mathscr{M})$ be the law at time $t>0$ of any agent. Then for all $t>0$ it holds that:
\begin{equation}\label{propagationofchaos}W^1(\mu^N_t,f_t)\leq C\frac{\e^{\left(2\lambda+\frac{\sigma(K)}{N}\right)t}  \sqrt{\|K\|_{L^\infty}}}{\|K\|_{\mathrm{Lip}}\sqrt{N}}\exp\left(t\sigma(K)\e^{\frac{\sigma(K)}{N}}\right),\end{equation}
%\begin{equation}\label{propagationofchaos}W^1(\mu^N_t,f_t)\leq C\frac{\theta\big(\|K\|_{L^\infty}\big)\sqrt{\|K\|_{L^\infty}}}{\sqrt{N}}\frac{\e^{\left(2\lambda+\frac{\sigma(K)}{N}\right)t}}{N(\e^{\frac{\sigma(K)}{N}}-1)}\exp\left(Nt\left(\e^{\frac{\sigma(K)}{N}}-1\right)\right),\end{equation}
where $C>0$ is a constant which depends only on $\MM$ and $d$ and 
\[\sigma(K) :=2\theta\big(\|K\|_{L^\infty}\big)\|K\|_{\mathrm{Lip}}.\]
\end{enumerate}
\end{theorem}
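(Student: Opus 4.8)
The plan is to adapt Sznitman's coupling method to the piecewise-deterministic setting, with one refinement at the point where the $N^{-1/2}$ rate is extracted. Let $(f_t)_{t\in[0,T]}$ be the solution of \eqref{bgkkernel} with datum $f_0$ (Proposition \ref{wellposednesskernel}), and introduce the \emph{nonlinear} PDMP $\bar Z_t=(\bar X_t,\bar m_t)$ on $\R^d\times\MM$: between jumps $\dot{\bar X}_t=\Phi(\bar m_t)$ with $\bar m_t$ frozen, and at rate $1$ the orientation is resampled from $M_{K*f_t}[\bar X_{t^-}]$; a Dynkin computation shows its time-marginals satisfy \eqref{mildsolution}, hence $\mathrm{Law}(\bar Z_t)=f_t$ by uniqueness. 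I would then realise, on one probability space, the IBM of Subsection \ref{sectionIBM} together with $N$ copies of the nonlinear process: take $N$ independent rate-$1$ Poisson clocks, whose superposition is a rate-$N$ clock with jump times $(T_n)$ and i.i.d.\ uniform labels $I_n$ recording which clock carries the $n$-th point --- this is exactly the data of the IBM; build the nonlinear copies $\bar Z^i$ on their respective clocks (hence mutually independent) with $\bar Z^i_0=Z^{i,N}_0\sim f_0$ i.i.d.; and at each $T_{n+1}$ with $I_n=i$, draw $\big(m^{i,N}_{T_{n+1}},\bar m^i_{T_{n+1}}\big)$ from a measurable $W^1$-optimal coupling of $M_{K*\hat\mu^N_{T_{n+1}^-}}[X^{i,N}_{T_{n+1}^-}]$ and $M_{K*f_{T_{n+1}}}[\bar X^i_{T_{n+1}^-}]$. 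Since the extra randomness at the jumps only makes the IBM depend on the $\bar Z^i$ (each $\bar Z^i$ updating only through its own past and fresh randomness whose conditional law given $\bar X^i_{T^-}$ is the prescribed one), the family $(\bar Z^i)_{i\le N}$ remains i.i.d.\ with marginal $f_t$; no explosion occurs since all jump rates are bounded.

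Next I would estimate $D^i_t:=\tilde d(Z^{i,N}_t,\bar Z^i_t)$ and $\delta(t):=\E[D^1_t]=\E[D^i_t]$ (exchangeability; $\delta(0)=0$). On any interval carrying no point of clock $i$, both positions move deterministically with frozen orientations, so $|X^{i,N}_t-\bar X^i_t|$ grows at rate at most $\lambda\,d(m^{i,N}_t,\bar m^i_t)\le\lambda D^i_t$ by the $\lambda$-Lipschitz bound on $\Phi$. At a point of clock $i$ the positions are unchanged and, by the optimal coupling and the triangle inequality, the conditional mean of the new $d(m^{i,N},\bar m^i)$ is at most $W^1(M_{K*f}[X^{i,N}],M_{K*f}[\bar X^i])+\|M_{K*\hat\mu^N}[X^{i,N}]-M_{K*f}[X^{i,N}]\|_{L^\infty(\MM)}$. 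The first term is $\le C\theta(\|K\|_{L^\infty})\|K\|_{\mathrm{Lip}}\,|X^{i,N}-\bar X^i|$ by \eqref{wassersteinboundM}; the second, by the flux-Lipschitz bound \eqref{fluxlipschitz}, is $\le\theta(\|K\|_{L^\infty})\,|J_{K*\hat\mu^N}(X^{i,N})-J_{K*f}(X^{i,N})|$, which I would split through $\bar\mu^N_t:=\tfrac1N\sum_j\delta_{\bar Z^j_t}$. The part $J_{K*\hat\mu^N}(x)-J_{K*\bar\mu^N}(x)=\tfrac1N\sum_j(K(x-X^{j,N})m^{j,N}-K(x-\bar X^j)\bar m^j)$ is bounded (since $K$ is bounded and Lipschitz and $|m|\le1$) by $(\mathrm{const})\|K\|_{\mathrm{Lip}}\,\tfrac1N\sum_jD^j_t$, while $J_{K*\bar\mu^N}(x)-J_{K*f}(x)=\tfrac1N\sum_j(K(x-\bar X^j)\bar m^j-\E[K(x-\bar X^1)\bar m^1])$ is an average of i.i.d.\ centred $E$-valued variables whose variance a direct computation (using $\int K=1$) bounds by $O(\|K\|_{L^\infty}N^{-1})$, so $\E|J_{K*\bar\mu^N}(x)-J_{K*f}(x)|=O(\|K\|_{L^\infty}^{1/2}N^{-1/2})$ \emph{uniformly in $x$}. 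Feeding this into Dynkin's formula (clock $i$ fires at rate $1$) gives a closed inequality $\delta(t)\le\int_0^t(\lambda+c\,\sigma(K)(1+N^{-1}))\,\delta(s)\,\dd s+c\,\theta(\|K\|_{L^\infty})\,\|K\|_{L^\infty}^{1/2}N^{-1/2}\,t$, and Grönwall --- keeping track of the exponential clock to produce the corrections $\sigma(K)/N$ and $\e^{\sigma(K)/N}$ --- yields the claimed estimate for $\delta(t)$, the prefactor $1/\|K\|_{\mathrm{Lip}}$ stemming from $\theta(\|K\|_{L^\infty})/\sigma(K)=1/(2\|K\|_{\mathrm{Lip}})$.

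Both assertions then follow. As $(Z^{1,N}_t,\bar Z^1_t)$ is a coupling of $\mu^N_t$ and $f_t$, one gets $W^1(\mu^N_t,f_t)\le\E[\tilde d(Z^{1,N}_t,\bar Z^1_t)]=\delta(t)$, which is \eqref{propagationofchaos}. For \eqref{propagationofchaoshatmu}, write $W^1(\hat\mu^N_t,f_t)\le W^1(\hat\mu^N_t,\bar\mu^N_t)+W^1(\bar\mu^N_t,f_t)\le\tfrac1N\sum_iD^i_t+W^1(\bar\mu^N_t,f_t)$; in expectation the first term is $\delta(t)\to0$ and the second vanishes by the law of large numbers for empirical measures of the i.i.d.\ copies $\bar Z^i$ (the family $\{f_t\}_{t\le T}$ being tight with uniformly bounded first moments, positions drifting at speed $\le\sup_\MM|\Phi|$).

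The hard part will be two-fold. First, arranging the coupling --- via Poisson superposition of the individual clocks --- so that the $N$ nonlinear copies stay \emph{mutually independent} while sharing the IBM clock and being optimally coupled to it at each jump; without that independence neither the single-particle estimate nor the empirical-measure estimate closes. Second, obtaining the rate $N^{-1/2}$ instead of the generic empirical-measure rate $N^{-1/d}$ in $\R^d$: this forces one to compare the interaction laws through the \emph{finite-dimensional flux} and to control the fluctuation of an i.i.d.\ $E$-valued average, which is precisely why \eqref{fluxlipschitz} is used directly rather than the packaged bound \eqref{wassersteinboundM}. The remaining Grönwall bookkeeping along the exponential clock, producing the exact exponents of \eqref{propagationofchaos}, is routine.
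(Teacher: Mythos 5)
Your proposal is correct and follows essentially the same route as the paper's proof: the same Sznitman-type coupling through superposed Poisson clocks with i.i.d.\ nonlinear (McKean) copies, the same $W^1$-optimal coupling of the two interaction laws at each jump, the same three-way splitting of the discrepancy into a self-distance term, an empirical-average term, and a centred fluctuation term whose second moment gives the $O(\sqrt{\|K\|_{L^\infty}/N})$ rate via \eqref{fluxlipschitz}. The only divergences are cosmetic: you run Gr\"onwall in continuous time via Dynkin's formula where the paper recurses over jump indices and then averages over the Poisson count $N_t$, and your ``uniformly in $x$'' fluctuation bound must in fact be applied at the random point $\bar X^i$, which requires exactly the conditioning on $\bar Z^i$ (with the $O(1/N)$ diagonal term) that the paper carries out in its Step~3.
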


The convergence result \eqref{propagationofchaoshatmu} and  \cite[Proposition 2.2]{sznitman89}  imply the $f_t$-chaoticity in the sense of \cite[Definition 2.1]{sznitman89} of the law (in $\mathcal{P}((\R^d\times \MM)^N)$) of the system $(Z^{i,N}_t)_t$, $i\in\{1,\ldots,N\}$ as stated in the following corollary.

\begin{corollary}
Let $k\in\N$ and let $\mu_t^{N,k}$ denote the law of any $k$ agents of the system $(Z^{i,N}_t)_t$, $i\in\{1,\ldots,N\}$ at time $t$. Assume that initially the agents are independent and identically distributed with respect to the law $f_0$.  Then for every $k$-tuple of bounded continuous functions $\phi_1,\ldots\phi_k$ on $\R^d\times\mathscr{M}$ it holds that: 
\begin{multline*}\int_{(\R^d\times\mathscr{M})^k} \phi_1(x_1,m_1)\ldots\phi_k(x_k,m_k)\dd\mu^{N,k}_t(x_1,m_1,\ldots,x_k,m_k)\\
\underset{N\to+\infty}{\longrightarrow}\left(\int_{\R^d\times\mathscr{M}}\phi_1(x,m)f_t(x,m)\dd x\dd m\right)\times\ldots\times\left(\int_{\R^d\times\mathscr{M}}\phi_k(x,m)f_t(x,m)\dd x\dd m\right)\end{multline*}
where $f_t$ is the solution of \eqref{bgkkernel} at time $t>0$ with initial condition $f_0$. 
\end{corollary}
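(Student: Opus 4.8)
The plan is to deduce this corollary from the empirical-measure convergence \eqref{propagationofchaoshatmu}, since the displayed limit is exactly the concrete form of the $f_t$-chaoticity of the laws of the particle system. First I would record the exchangeability of $(Z^{i,N}_t)_{1\leq i\leq N}$: the jump indices $I_n$ are i.i.d.\ uniform on $\{1,\ldots,N\}$ and the initial data are i.i.d., so the law $\mu^{N,k}_t$ of any $k$ of the agents is well defined and symmetric, and for $\phi_1,\ldots,\phi_k\in C_b(\R^d\times\MM)$ one has $\langle\phi_1\otimes\cdots\otimes\phi_k,\mu^{N,k}_t\rangle=\E\big[\phi_1(Z^{1,N}_t)\cdots\phi_k(Z^{k,N}_t)\big]$.

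Next I would invoke the equivalence theorem of Sznitman. From \eqref{propagationofchaoshatmu} we get $W^1(\hat{\mu}^N_t,f_t)\to 0$ in probability; since $W^1$-convergence on $\mathcal{P}(\R^d\times\MM)$ implies weak convergence and the limit $f_t$ is deterministic, the random probability measures $\hat{\mu}^N_t$ converge in law to the constant $f_t$ in $\mathcal{P}\big(\mathcal{P}(\R^d\times\MM)\big)$. By \cite[Proposition 2.2]{sznitman89} this is equivalent to the sequence of laws of $(Z^{1,N}_t,\ldots,Z^{N,N}_t)$ on $(\R^d\times\MM)^N$ being $f_t$-chaotic in the sense of \cite[Definition 2.1]{sznitman89}, and unwinding that definition gives precisely the stated convergence.

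Alternatively --- and more directly --- I would write $\langle\phi_1\otimes\cdots\otimes\phi_k,\mu^{N,k}_t\rangle$ as the average of $\E[\phi_1(Z^{i_1,N}_t)\cdots\phi_k(Z^{i_k,N}_t)]$ over ordered $k$-tuples of pairwise distinct indices and compare it with $\E\big[\prod_{j=1}^k\langle\phi_j,\hat{\mu}^N_t\rangle\big]$, which averages the same quantity over all $k$-tuples; since each $\phi_j$ is bounded, the tuples with a repeated index contribute only $O(1/N)$. As $\mu\mapsto\langle\phi_j,\mu\rangle$ is continuous for the $W^1$-topology and $W^1(\hat{\mu}^N_t,f_t)\to 0$ in probability, the continuous mapping theorem gives $\langle\phi_j,\hat{\mu}^N_t\rangle\to\langle\phi_j,f_t\rangle$ in probability, and boundedness of $\prod_j\langle\phi_j,\hat{\mu}^N_t\rangle$ together with dominated convergence yields $\E\big[\prod_j\langle\phi_j,\hat{\mu}^N_t\rangle\big]\to\prod_j\int_{\R^d\times\MM}\phi_j\,f_t\,\dd x\,\dd m$; the $O(1/N)$ term is then negligible. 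The only point requiring a little care in either route is that the $\phi_j$ are merely bounded continuous, not Lipschitz, so they cannot be fed directly into the Kantorovich duality \eqref{kantorovich} for $W^1$; this is handled by using that $W^1$-convergence is strictly stronger than weak convergence and that the limit $f_t$ is deterministic, and everything else is routine bookkeeping.
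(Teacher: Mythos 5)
Your first route is exactly the paper's argument: the text preceding the corollary derives it from the empirical-measure convergence \eqref{propagationofchaoshatmu} together with \cite[Proposition 2.2]{sznitman89}, using the exchangeability of the system and the fact that $W^1$-convergence to the deterministic limit $f_t$ gives convergence in law of $\hat{\mu}^N_t$ to the constant $f_t$. Your alternative direct computation is just an unpacking of the proof of Sznitman's equivalence and is also correct, so there is nothing to add.
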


The bound \eqref{propagationofchaos} is similar to the bound obtained in \cite[Proposition 2.3]{jourdainmeleard} for the classical McKean-Vlasov system. Moreover it is explicit in terms of the norm of $K$. In particular, it is possible to take a kernel which depends on the number of agents $N$. In the following, we will consider a sequence of kernels: 
\[K^N(x):=\frac{1}{\varepsilon_N^d}K\left(\frac{x}{\varepsilon_N}\right),\]
where $\varepsilon_N\to0$ and $K$ is a smooth observation kernel. Without loss of generality, we can assume that the infinite and Lipschitz norms of $K$ are equal to 1. In particular the sequence $(K^N)_N$ is an approximation of the unity (meaning that $K^N\to\delta_0$ as $N\to+\infty$) and the bound \eqref{propagationofchaos} is still relevant provided that $\varepsilon_N\to0$ slowly enough. This type of interaction is called \textit{moderate} following the terminology of \cite{jourdainmeleard,Oelschl_ger_1985}. Based on this explicit bound and on regularity results for the sequence of solutions associated to the sequence of kernels (see in particular Lemma \ref{lemmapropertiesfN}), we will prove the following theorem.  

\begin{theorem}[Moderate interaction]\label{moderateinteractiontheorem}
Let $a>0$ and $T>0$ such that \eqref{bgkdelta} is well posed on $C([0,T],B_a)$ where $B_a$ is the ball of radius $a>0$ in $L^\infty\cap\widetilde{\mathcal{P}}(\R^d\times\MM)$. Let $f_0\in B_a$. Let us define the sequence of rescaled interaction kernels: 
\[K^N(x):=\frac{1}{\varepsilon_N^d}K\left(\frac{x}{\varepsilon_N}\right)\]
where $\varepsilon_N\to0$ slowly enough so that: 
\begin{equation}\label{boundepsilonN}
\frac{\exp\left(2T\theta(\varepsilon_N^{-d})\varepsilon_{N}^{-(d+1)}\right)}{\sqrt{N}}\underset{N\to+\infty}{\longrightarrow}0.\end{equation}
%Fix $t\in[0,T]$. Let $\mu^N_t$ be the law at time $t>0$ of any agent defined by the PDMP \eqref{deterministicpart}, \eqref{jumppart} with the interaction kernel
%\[K^N(x):=\frac{1}{\varepsilon_N^d}K\left(\frac{x}{\varepsilon_N}\right)\]
%where $\varepsilon_N\to0$ slowly enough so that: 
%\[\frac{2\theta\left(\varepsilon_N^{-d}\right)\varepsilon_N^{-(d+1)}}{N}\underset{N\to+\infty}{\longrightarrow}0\]
%and
%\begin{equation}\label{boundepsilonN}
%\forall t>0,\,\,\,\frac{{\varepsilon_N^{d/2+1}}}{\sqrt{N}}\exp\left(t\theta\left(\varepsilon_N^{-d}\right)\varepsilon_N^{-(d+1)}\right)\underset{N\to+\infty}{\longrightarrow}0.\end{equation}
Let $\mu^N_t$ be the law at time $t<T$ of any agent defined by the PDMP \eqref{deterministicpart}, \eqref{jumppart} with the interaction kernel $K^N$.Then  it holds that:
\[\mu^N_t\overset{*}{\underset{N\to+\infty}{\longrightarrow}} f_t,\]
where $f_t$ is the solution of \eqref{bgkdelta} at time $t<T$ and where the convergence is the weak convergence of measures.
\end{theorem}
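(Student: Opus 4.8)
The plan is to interpose between $\mu^N_t$ and $f_t$ the solution $f^N_t$ of the \emph{non-local} equation \eqref{bgkkernel} driven by the kernel $K^N$ (globally well posed by Proposition \ref{wellposednesskernel}, since $f_0\in B_a\subset\widetilde{\mathcal{P}}(\R^d\times\MM)$), and to prove separately that $W^1(\mu^N_t,f^N_t)\to 0$ and that $f^N_t\rightharpoonup f_t$. For the first point, since $\|K\|_{L^\infty}=\|K\|_{\mathrm{Lip}}=1$ we have $\|K^N\|_{L^\infty}=\varepsilon_N^{-d}$, $\|K^N\|_{\mathrm{Lip}}=\varepsilon_N^{-(d+1)}$ and hence $\sigma(K^N)=2\theta(\varepsilon_N^{-d})\varepsilon_N^{-(d+1)}$. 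Applying Theorem \ref{propagationofchaostheorem} with the kernel $K^N$ (the agents starting i.i.d.\ from $f_0$), the bound \eqref{propagationofchaos} is the product of the algebraic prefactor $C\,\varepsilon_N^{(d/2)+1}N^{-1/2}\e^{(2\lambda+\sigma(K^N)/N)t}$ and the exponential factor $\exp\!\big(t\,\sigma(K^N)\,\e^{\sigma(K^N)/N}\big)$. Hypothesis \eqref{boundepsilonN} says precisely that $\e^{T\sigma(K^N)}=o(\sqrt N)$, whence $\sigma(K^N)\le (2T)^{-1}\log N$ for $N$ large, so $\sigma(K^N)/N\to0$ and $\sigma(K^N)^2/N\to0$; a direct estimate then shows that this product tends to $0$ uniformly for $t$ in any compact subinterval of $[0,T)$, which gives $W^1(\mu^N_t,f^N_t)\to0$.

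The core of the argument is a set of $N$-uniform estimates on the family $(f^N)_N$, gathered in Lemma \ref{lemmapropertiesfN}. The obstruction is that $\alpha(\|K^N\|_{L^\infty})$ and $\theta(\|K^N\|_{L^\infty})$ diverge, so the a priori bounds of Proposition \ref{wellposednesskernel} degenerate; the way around it is that the interaction law enters \eqref{bgkkernel} only through the flux $J_{K^N*f^N_s}(x)=(K^N\ast J_{f^N_s})(x)$, and that, because $\|K^N\|_{L^1}=1$ and $|m|\le1$,
\[|J_{K^N*f^N_s}(x)|\le\|K^N\|_{L^1}\,\|\rho_{f^N_s}\|_{L^\infty}\le\|f^N_s\|_{L^\infty}\]
independently of $N$. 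Inserting this into Duhamel's formula \eqref{mildsolution} and using Assumption \ref{asslocallybounded} gives $\|f^N_t\|_{L^\infty}\le\e^{-t}\|f_0\|_{L^\infty}+\int_0^t\e^{-(t-s)}\alpha\big(\|f^N_s\|_{L^\infty}\big)\|f^N_s\|_{L^\infty}\,\dd s$, and a Gronwall comparison with the scalar ODE $\dot v=(\alpha(v)-1)v$, $v(0)=\|f_0\|_{L^\infty}<a$, yields $\sup_N\sup_{t\in[0,T]}\|f^N_t\|_{L^\infty}\le a'$ with $a'$ independent of $N$ (on $[0,T]$, which lies within the non-blow-up interval of this ODE precisely because \eqref{bgkdelta} is assumed well posed on $B_a$, one also has $a'\le a$). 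With all interaction constants now frozen at level $a'$, Lemma \ref{lemmapropertiesfN} further provides: a spatial modulus of continuity $\sup_N\sup_{t\in[0,T]}\|f^N_t(\cdot+h,\cdot)-f^N_t\|_{L^1}\to0$ as $|h|\to0$ (propagated through \eqref{mildsolution} via the flux-Lipschitz bound \eqref{fluxlipschitz} and the translation regularity of $K^N\ast J_{f^N}$), a uniform modulus of continuity in $t$, and --- using that $\Phi$ is bounded on the compact $\MM$ and that $f_0$ is a (hence tight) probability measure, so that mass moves at finite speed --- tightness in $\R^d$ uniform in $N$ and $t\in[0,T]$. By Arzel\`a--Ascoli in $C([0,T],\mathcal{P}(\R^d\times\MM))$ with the weak topology, $(f^N)_N$ is relatively compact.

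Let $\bar f$ be the limit of a subsequence $(f^{N_k})_k$; one identifies it by passing to the limit in the mild formulation \eqref{mildsolution} written for $K^{N_k}$. The free-transport operators $\mathsf{T}_t$ being weakly continuous, only the gain term $G^{K^{N_k}}_{f^{N_k}_s}=\rho_{f^{N_k}_s}\,M_{K^{N_k}*f^{N_k}_s}[\cdot]$ needs care: the uniform spatial equicontinuity makes $K^{N_k}\ast J_{f^{N_k}_s}\to J_{\bar f_s}$ (an approximation of the identity acting on an $L^1$ function with a uniform modulus), whence by \eqref{fluxlipschitz} $\|M_{K^{N_k}*f^{N_k}_s}[x]-M_{\bar f_s}[x]\|_{L^\infty(\MM)}\le\theta(a')\,|K^{N_k}\ast J_{f^{N_k}_s}(x)-J_{\bar f_s}(x)|\to0$, and together with $\rho_{f^{N_k}_s}\to\rho_{\bar f_s}$ this yields $G^{K^{N_k}}_{f^{N_k}_s}\to G_{\bar f_s}$ in $L^1$; dominated convergence in $s$ (the uniform $L^1\cap L^\infty$ bounds providing the domination) then shows that $\bar f$ is a mild solution of \eqref{bgkdelta} with initial datum $f_0$ and $\sup_t\|\bar f_t\|_{L^\infty}\le a'\le a$. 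By the uniqueness part of Proposition \ref{wellposednessdelta}, $\bar f=f$ on $[0,T)$; since every subsequence has a sub-subsequence converging to $f$, the whole family converges and $f^N_t\rightharpoonup f_t$ for each $t<T$. Finally, for every bounded Lipschitz $\varphi$ on $\R^d\times\MM$, $|\langle\varphi,\mu^N_t\rangle-\langle\varphi,f_t\rangle|\le\|\varphi\|_{\mathrm{Lip}}\,W^1(\mu^N_t,f^N_t)+|\langle\varphi,f^N_t\rangle-\langle\varphi,f_t\rangle|\to0$ by the first two steps, and since bounded Lipschitz functions are convergence-determining for weak convergence on the Polish space $\R^d\times\MM$, we conclude $\mu^N_t\rightharpoonup f_t$, which is the assertion.

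The genuinely delicate point is the second step: obtaining the $N$-uniform $L^\infty$ bound and, above all, the $N$-uniform spatial modulus of continuity for $f^N$ despite the divergence of the constants attached to $K^N$ (this is what makes the limit $K^N\to\delta_0$ legitimate). Once Lemma \ref{lemmapropertiesfN} is in place, the remaining compactness-and-uniqueness argument is routine.
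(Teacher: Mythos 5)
Your proposal is correct in outline and shares the paper's skeleton exactly: interpose the solution $f^N$ of \eqref{bgkkernel} with kernel $K^N$, apply Theorem \ref{propagationofchaostheorem} with $K^N$ under \eqref{boundepsilonN} to get $W^1(\mu^N_t,f^N_t)\to0$ (your observation that $\sigma(K^N)^2/N\to0$ is needed to absorb the factor $\e^{\sigma(K^N)/N}$ in the exponent is correct and in fact slightly more careful than the paper's own computation), exploit the key fact that $|J_{K^N*g}|\le a$ because $\|K^N\|_{L^1}=1$ so that all interaction constants are frozen at level $a$ rather than at $\|K^N\|_{L^\infty}$, invoke the uniform estimates of Lemma \ref{lemmapropertiesfN}, and conclude by compactness and the uniqueness part of Proposition \ref{wellposednessdelta}. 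Where you genuinely diverge is the compactness machinery. The paper obtains, for each fixed $t$, \emph{strong} relative compactness of $(f^N_t)_N$ in $L^1(\R^d\times\MM)$ via a bespoke Riesz--Fr\'echet--Kolmogorov lemma on the product $\R^d\times\MM$ (Lemma \ref{rfkmanifold}, proved through an atlas of $\MM$, and Lemma \ref{pointwisecompactness}), then applies Ascoli in $C([0,T],L^1)$; this makes the passage to the limit in $G^{K^N}_{f^N_s}$ immediate. You instead use Prokhorov plus Ascoli in the weak topology of $\mathcal{P}(\R^d\times\MM)$, which avoids the manifold compactness lemma entirely, and only ever need strong convergence of the spatial marginals $\rho_{f^{N_k}_s}$ and $J_{f^{N_k}_s}$, which live on $\R^d$ where the classical Fr\'echet--Kolmogorov theorem applies. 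That is a real economy.

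The one step you assert without justification is precisely this upgrade: weak convergence $f^{N_k}_s\rightharpoonup\bar f_s$ does \emph{not} by itself give $J_{f^{N_k}_s}\to J_{\bar f_s}$ or $\rho_{f^{N_k}_s}\to\rho_{\bar f_s}$ in $L^1(\R^d)$, and your phrase about ``an approximation of the identity acting on an $L^1$ function with a uniform modulus'' only controls $K^{N_k}*J_{f^{N_k}_s}-J_{f^{N_k}_s}$, not $J_{f^{N_k}_s}-J_{\bar f_s}$. The missing piece is fillable with ingredients you already have: the uniform translation modulus \eqref{spacetranslationstability} and the tightness \eqref{tightness} pass to the marginals, so $(J_{f^{N_k}_s})_k$ and $(\rho_{f^{N_k}_s})_k$ are relatively compact in $L^1(\R^d)$ by the classical Fr\'echet--Kolmogorov theorem, and any strong limit point is identified as $J_{\bar f_s}$ (resp.\ $\rho_{\bar f_s}$) by testing against $C_b$ functions using the weak convergence. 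With that sentence added, together with the flux-Lipschitz bound \eqref{fluxlipschitz} at level $\theta(a)$ and domination in $s$, your identification of the limit as the mild solution of \eqref{bgkdelta} and the appeal to uniqueness go through, and the argument is complete.
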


\begin{rem}
In the Vicsek and Body-orientation cases, one has $\theta(\varepsilon_N^{-d})=\e^{c/\varepsilon_N^{-d}}$. In order to fulfill Hypothesis \eqref{boundepsilonN}, we have to take $\varepsilon_N\sim \log\log(N)^{-1/d}$. From a physical point of view, this is not a satisfactory order of magnitude even for very large values of $N$. Moreover $\varepsilon_N$ decays much slower than $N^{-\alpha}$ with $\alpha\in(0,1)$ which is the result obtained in \cite{Oelschl_ger_1985} in a much simpler setting with constant diffusion. Similarly as in \cite{jourdainmeleard}, the estimate obtained here should therefore be understood as a purely theoretical result needed to obtain rigorously the purely local BGK equation and is not claimed to be optimal. 
\end{rem}

\begin{rem}
Both theorems rely on the fact that, despite its nonlinearity, the interaction is regular enough (Lipschitz assumptions) to prove the well-posedness of the kinetic PDEs. As mentioned in the introduction we could also consider another version of this model where the flux is ``normalised'' which consists in taking instead of a flux $J\in E$ its projection on the manifold $\MM$. This case is much more singular and is left for future work. See however the approach of \cite{figalli2018global,gamba2016global} for well-posedness results on the ``normalised" Vicsek model. 
\end{rem}

\section{Propagation of chaos (proof of Theorem \ref{propagationofchaostheorem})}\label{propagationofchaossection}

This section is devoted to the proof of Theorem \ref{propagationofchaostheorem}. It is based on a coupling argument as in \cite{sznitman89}. More precisely we define $N$ independent so-called McKean processes such that their common law satisfies \eqref{bgkkernel} (also known as McKean-Vlasov processes or Distribution Dependent processes in the literature for diffusion models \cite{eberle2018quantitative,veretennikov2006ergodic,wang2018distribution,mishura2016existence}). We then use a coupling between each one of the $N$ particles described in Subsection \ref{sectionIBM} with each one of the $N$ McKean processes to control the expectation of the distance between their paths over time. To obtain this control, firstly, we couple the time of the jumps between the processes and secondly, we use an optimal coupling between the laws of the new orientations of the particle processes and the McKean processes to control the expectation of the distance between the two paths at a jump. At each jump time, the expectation of the distance between a particle process and its associated McKean process is bounded by the sum of the average distance between each pair of particle and McKean processes just after the previous jump and an error term which tends to zero when $N$ tends to $+\infty$.  The control of the expectation of the distance between their paths over time then follows from a discrete Gronwall type inequality. 

\begin{proof}[\sc Proof (of Theorem \ref{propagationofchaostheorem})]

We first define $N$ independent copies of a so-called McKean process $\overline{Z}_t$ with which the processes on the agents will be coupled. \\

\noindent\textbf{Step 1.} \textit{McKean processes and coupling}\\

The McKean process is the (non-homogemeous) PDMP defined by:
\begin{enumerate}[$\bullet$]
\item a homogeneous Poisson process of rate 1, with holding times $(\overline{S}_n)_n$ and jumping times 
\[\overline{T}_n:=\overline{S}_1+\ldots+\overline{S}_{n},\]
\item the deterministic flow $\phi_t :\R^d\times \MM\to\R^d\times\MM$ for $t\geq0$:
\[\phi_t(x,m)=(x-t\Phi(m),m),\]
\item the transition probability at time $\overline{T}_n$: 
\[Q_{\overline{T}_n}(x,m):=\delta_x\otimes M_{K*f_{\overline{T}_n^-}}[x],\]
where $f_{\overline{T}_n^-}:=\mathsf{T}_{\overline{S}_n}f_{\overline{T}_{n-1}}=f_{\overline{T}_{n-1}}\circ\phi_{\overline{S}_n}$ and $f_t$ is the solution at time $t$ of \eqref{bgkkernel}.
\end{enumerate}

The well definition of the McKean process follows from the wellposedness of \eqref{bgkkernel} (Proposition \ref{wellposednesskernel}) as in \cite[Theorem 1.1]{sznitman89}. It is defined in such a way that its law satisfies \eqref{bgkkernel}. \\ 

Now for $i\in\{1,\ldots,N\}$ let $(\overline{Z}^i_t)_t$ be an independent copy of the McKean process. The positions and orientation components are denoted by $\overline{Z}^i_t=(\overline{X}^i_t,\overline{m}^i_t)$. Let us define the holding times $(S_n)_n$ and jumping times $(T_n)_n$ resulting from merging the Poisson processes associated to the $N$ independent copies of the McKean process. It defines a homogeneous Poisson process of rate $N$ \cite[Section 8.1.3]{bremaud}. In particular, the holding times $S_n$ are independent and follow an exponential law of parameter $N$ (their expectation is $1/N$). Let us also define $(I_n)_n$ the sequence of indexes in $\{1,\ldots,N\}$ such that the process $I_n$ is responsible for the $n$-th jump. The $I_n$ form a sequence of i.i.d. uniform random variables on $\{1,\ldots,N\}$.\\

Let us define a system $N$ PDMP $(Z^i_t)_t$, $i\in\{1,\ldots,N\}$, on $\R^d\times\MM$, with components denoted by $Z^i_t=(X^i_t,m^i_t)$, as follows.

\begin{enumerate}[$\bullet$]
\item Initially for all $i\in\{1,\ldots,\}$, $\overline{Z}^i_0= Z^{i}_0$. 
\item The jump times $(T_n)$ and the indexes $(I_n)$ are given above. 
\item The processes follow the deterministic flow $\phi$ between two jump times. 
\item At time $T_n$, the new orientation $m^{I_n}_{T_n}$ of $Z^{I_n}_{T_n}$ is defined by: 
\[m^{I_n}_{T_n} := s\left(\overline{m}^{I_n}_{T_n}\right),\]
where $\overline{m}^{I_n}_{T_n}$ is the orientation of $\overline{Z}^{I_n}_{T_n}$ at time $T_n$ and $s:\MM\to\MM$ is a Borel map such that:
\[s^\sharp M_{K*f_{T_n^-}}\left[\overline{X}^{I_n}_{T_n^-}\right]=M_{K*\hat{\mu}_{T_n^-}}\left[X^{I_n}_{T_n^-}\right]\]
and $s$ is an optimal transport map between the two distributions. In particular it implies that the random variable $m^{I_n}_{T_n}$ is distributed according to $M_{K*\hat{\mu}_{T_n^-}}\left[X^{I_n}_{T_n^-}\right]$ (conditionally to $\mathcal{F}_{T_n^-}$) and
\begin{equation}\label{optimal}\E\left[d\left(m^{I_n}_{T_n},\overline{m}^{I_n}_{T_n}\right)\Big|\mathcal{F}_{T_n^-}\right]=W^1\left(M_{K*f_{T_n^-}}\left[\overline{X}^{I_n}_{T_n^-}\right],M_{K*\hat{\mu}_{T_n^-}}\left[X^{I_n}_{T_n^-}\right]\right)\end{equation}
\end{enumerate}
where the $\sigma$-algebra $\mathcal{F}_{T_n^-}$ is defined below.  \\

The existence of such optimal transport map is given by \cite[Theorem 1]{feldman2002monge} (Monge problem, see \cite{villanioptimaltransport}) and unicity can be recovered under additional assumptions which will not be needed here.

\begin{rem}In \cite[Section 3.3]{degond2018alignment} an explicit transport map is used : in the $SO_3(\R)$-framework a transport map from a von Mises distribution of parameter $\Lambda_1\in SO_3(\R)$ to a von Mises of parameter $\Lambda_2\in SO_3(\R)$ is given by 
\[A\mapsto \Lambda_1^T A\Lambda_2.\]

\end{rem}

We define the following filtrations: 
\[\mathcal{G}_n := \sigma(S_1,\ldots,S_n),\]
\[\mathcal{F}_t := \sigma\Big(Z^{i}_s,\overline{Z}^i_s \,|\, 0\leq s \leq t, \,i\in\{1,\ldots,N\}\Big),\]
and we will write: 
\[\mathcal{F}_{T_n^-} :=\mathcal{F}_{T_{n-1}}\vee\sigma(S_n)\vee\sigma(I_n).\]
In particular, 
\[\mathcal{G}_n\subset\mathcal{F}_{T_n^-}.\]

\noindent\textbf{Step 2.} \textit{Control of the jumps}\\

In this step, we bound the expectation of the distance between the processes at the jump time $T_n$, knowing the system at time $T_{n-1}$ and the holding time $S_n$. By definition of the processes, the couples of position and orientation
\[Z^{I_n}_{T_n} = (X^{I_n}_{T_n},m^{I_n}_{T_n})\,\,\,\text{and}\,\,\,\overline{Z}^{I_n}_{T_n} = (\overline{X}^{I_n}_{T_n},\overline{m}^{I_n}_{T_n})\]
of the particle $I_n$ and its associated McKean process satisfy:
\begin{align}\label{expectationdistance}
\E\left[d\Big(Z^{I_n}_{T_n},\overline{Z}^{I_n}_{T_n}\Big) \Big| \mathcal{F}_{T_n^-}\right]
&\leq \big|X^{I_n}_{T_n^-}-\overline{X}^{I_n}_{T_n^-}\big|+ W^1\Big(M_{K*\hat{\mu}^N_{T_n^-}}\big[X^{I_n}_{T_n^-}\big],M_{K*f_{T_n^-}}\big[\overline{X}^{I_n}_{T_n^-}\big]\Big)\nonumber\\
&\leq(1+\lambda S_n)d\Big(Z^{I_n}_{T_{n-1}},\overline{Z}^{I_n}_{T_{n-1}}\Big) + W^1\Big(M_{K*\hat{\mu}^N_{T_n^-}}\big[X^{I_n}_{T_n^-}\big],M_{K*f_{T_n^-}}\big[\overline{X}^{I_n}_{T_n^-}\big]\Big)
\end{align}
where we used \eqref{optimal} in the first inequality and the fact that the flow is $\lambda$-Lipschitz in the second inequality. \\

The last $W^1$-distance is split in three parts.
\begin{multline}\label{threeterms}
 W^1\Big(M_{K*\hat{\mu}^N_{T_n^-}}\big[X^{I_n}_{T_n^-}\big],M_{K*f_{T_n^-}}\big[\overline{X}^{I_n}_{T_n^-}\big]\Big) \\
\hspace{-1cm} \leq  W^1\Big(M_{K*\hat{\mu}^N_{T_n^-}}\big[X^{I_n}_{T_n^-}\big],M_{K*\hat{\mu}^N_{T_n^-}}\big[\overline{X}^{I_n}_{T_n^-}\big]\Big)\\
\hspace{3cm}+ W^1\Big(M_{K*\hat{\mu}^N_{T_n^-}}\big[\overline{X}^{I_n}_{T_n^-}\big],M_{K*\overline{\mu}^N_{T_n^-}}\big[\overline{X}^{I_n}_{T_n^-}\big]\Big)\\
+ W^1\Big(M_{K*\overline{\mu}^N_{T_n^-}}\big[\overline{X}^{I_n}_{T_n^-}\big],M_{K*f_{T_n^-}}\big[\overline{X}^{I_n}_{T_n^-}\big]\Big)
\end{multline}

Where $\overline{\mu}^N_t$ is the empirical measure of the nonlinear processes: 
\[\overline{\mu}^N_t := \frac{1}{N}\sum_{i=1}^N \delta_{\overline{Z}^i_t}.\]
For the first term on the right-hand side of \eqref{threeterms}, we use \eqref{wassersteinboundM} and the fact that the dynamics is deterministic on the time interval $[T_{n-1},T_n)$, this leads to the following estimate:
\begin{align*}
W^1\Big(M_{K*\hat{\mu}^N_{T_n^-}}\big[X^{I_n}_{T_n^-}\big],M_{K*\hat{\mu}^N_{T_n^-}}\big[\overline{X}^{I_n}_{T_n^-}\big]\Big)&\leq \theta\big(\|K\|_{L^\infty}\big)\|K\|_{\mathrm{Lip}}\big|X^{I_n}_{T_n^-}-\overline{X}^{I_n}_{T_n^-}\big|\\
&\leq \theta\big(\|K\|_{L^\infty}\big)\|K\|_{\mathrm{Lip}}(1+\lambda S_n)d\Big(Z^{I_n}_{T_{n-1}},\overline{Z}^{I_n}_{T_{n-1}}\Big).
\end{align*}

Similarly for the second term on the right-hand side of \eqref{threeterms}:
\begin{align*}W^1\Big(M_{K*\hat{\mu}^N_{T_n^-}}\big[\overline{X}^{I_n}_{T_n^-}\big],M_{K*\overline{\mu}^N_{T_n^-}}\big[\overline{X}^{I_n}_{T_n^-}\big]\Big) &\leq \theta\big(\|K\|_{L^\infty})\|K\|_{\mathrm{Lip}}W^1(\hat{\mu}^N_{T_n^-},\overline{\mu}^N_{T_n^-})\\
&\leq \theta\big(\|K\|_{L^\infty})\|K\|_{\mathrm{Lip}}\frac{1}{N}\sum_{i=1}^N d\Big(Z^{i}_{T_n^-},\overline{Z}^i_{T_n^-}\Big)\\
&\leq \theta\big(\|K\|_{L^\infty})\|K\|_{\mathrm{Lip}}(1+\lambda S_n)\frac{1}{N}\sum_{i=1}^N d\Big(Z^{i}_{T_{n-1}},\overline{Z}^i_{T_{n-1}}\Big).
\end{align*}

The third term on the right-hand side of \eqref{threeterms} does not involve the processes $Z^i_t$ and will be considered as an \textit{error term}. Using \eqref{fluxlipschitz}, it holds that
\begin{align*}W^1\Big(M_{K*\overline{\mu}^N_{T_n^-}}\big[\overline{X}^{I_n}_{T_n^-}\big],M_{K*f_{T_n^-}}\big[\overline{X}^{I_n}_{T_n^-}\big]\Big)&\leq \theta\big(\|K\|_{L^\infty})\Big|J_{K*\overline{\mu}^N_{T_n^-}}\big(\overline{X}^{I_n}_{T_n^-}\big)-J_{K*f_{T_n^-}}\big(\overline{X}^{I_n}_{T_n^-}\big)\Big|\\
&=:\theta\big(\|K\|_{L^\infty})e^{I_n}_{T_n^-}.\end{align*}

In addition, for all $i\ne I_n$, it holds that
\[\E\left[d\Big(Z^{i}_{T_n},\overline{Z}^{i}_{T_n}\Big) \Big| \mathcal{F}_{T_n^-}\right]\leq (1+\lambda S_n)d\Big(Z^{i}_{T_n},\overline{Z}^{i}_{T_n}\Big).\]

Gathering everything and from \eqref{expectationdistance} it leads to
\begin{multline*}
\E[Y_{T_n}|\mathcal{F}_{T_n^-}]\leq (1+\lambda S_n)Y_{T_{n-1}}\\
\hspace{-2cm}+\theta\big(\|K\|_{L^\infty}\big)\|K\|_{\mathrm{Lip}}(1+\lambda S_n)d\Big(Z^{I_n}_{T_{n-1}},\overline{Z}^{I_n}_{T_{n-1}}\Big)\\
+\theta\big(\|K\|_{L^\infty})\|K\|_{\mathrm{Lip}}(1+\lambda S_n)\frac{1}{N}\sum_{i=1}^N d\Big(Z^{i}_{T_{n-1}},\overline{Z}^i_{T_{n-1}}\Big)+\theta\big(\|K\|_{L^\infty})e^{I_n}_{T_n^-}
\end{multline*}
where 
\[Y_t := \sum_{i=1}^N d\Big(Z^i_t,\overline{Z}^i_t\Big).\]
Taking the conditional expectation with respect to $\mathcal{G}_n$ gives:
\begin{multline}\label{VI}\E[Y_{T_n}|\mathcal{G}_n]\leq(1+\lambda S_n)\E[Y_{T_{n-1}}|\mathcal{G}_{n-1}]+(1+\lambda S_n)\frac{\sigma(K)}{N}\E[Y_{T_{n-1}}|\mathcal{G}_{n-1}]\\+\theta\big(\|K\|_{L^\infty})\E\Big[e^{I_n}_{T_n^-}\Big|\mathcal{G}_n\Big],\end{multline}
where we have used that $S_n$ is $\mathcal{G}_n$-measurable and the independence relations : $I_n\perp\mathcal{F}_{T_{n-1}}$, $I_n\perp \mathcal{G}_n$ and $I_n\sim\mathcal{U}({1,\ldots,N})$ which imply that
\[\E\left[d\left(Z^{I_n}_{T_{n-1}},\overline{Z}^{I_n}_{T_{n-1}}\right)\Big|\mathcal{G}_{n}\right]=\frac{1}{N}\E\left[Y_{T_{n-1}}|\mathcal{G}_n\right],\]
and since $\mathcal{G}_n=\mathcal{G}_{n-1}\vee\sigma(S_n)$ and $S_n\perp \mathcal{F}_{T_{n-1}}$ the tower property \cite[Section 9.7.(i)]{williams1991probability} implies, 
\[\E\left[Y_{T_{n-1}}|\mathcal{G}_n\right]=\E\left[Y_{T_{n-1}}|\mathcal{G}_{n-1}\right].\]

\noindent\textbf{Step 3.} \textit{Control of the error term}\\

We are now looking for a uniform bound on the error term which depends only on $N$. Recall that,
\[e^{I_n}_{T_n^-} = \left|\frac{1}{N}\sum_{j=1}^N K\left(\overline{X}^{I_n}_{T_{n}^-}-\overline{X}^j_{T_n^-}\right)\overline{m}^{j}_{T_{n}^-}-\iint_{\R^d\times \MM} K\left(\overline{X}^{I_n}_{T_n^-}-x\right)mf_{T_n^-}(x,m)\dd x\dd m\right|.\]
Let us define: 
\[b\left(\overline{Z}^i_{T_n^-},\overline{Z}^j_{T_n^-}\right):=K\left(\overline{X}^{i}_{T_{n}^-}-\overline{X}^j_{T_n^-}\right)\overline{m}^{j}_{T_{n}^-}-\iint_{\R^d\times \MM} K\left(\overline{X}^{i}_{T_n^-}-x\right)mf_{T_n^-}(x,m)\dd x\dd m.\]
These quantities are uniformly bounded:
\[\left|b\left(\overline{Z}^i_{T_n^-},\overline{Z}^j_{T_n^-}\right)\right|\leq C\|K\|_{L^\infty}\]
and satisfy (by pairwise independence of the McKean processes): 
\begin{equation}\label{centeringproperty}\E\left[b\left(\overline{Z}^i_{T_n^-},\overline{Z}^j_{T_n^-}\right)\Big|\overline{Z}^i_{T_n^-}\right]=0.\end{equation}
The Cauchy-Schwarz inequality implies
\begin{align*}
\E\Big[e^{I_n}_{T_n^-}\Big|\mathcal{G}_n\Big]^2&=\E\left[\left|\frac{1}{N}\sum_{j=1}^{N}b\left(\overline{Z}^{I_n}_{T_n^-},\overline{Z}^j_{T_n^-}\right)\right|\Big|\mathcal{G}_n\right]^2\\
&\leq\E\left[\frac{1}{N^2}\left(\sum_{j=1}^{N}b\left(\overline{Z}^{I_n}_{T_n^-},\overline{Z}^j_{T_n^-}\right)\right)^2\Big|\mathcal{G}_n\right]\\
&\hspace{0.5cm}= \frac{1}{N^2}\sum_{j,k=1}^{N}\E\left[b\left(\overline{Z}^{I_n}_{T_n^-},\overline{Z}^j_{T_n^-}\right)b\left(\overline{Z}^{I_n}_{T_n^-},\overline{Z}^k_{T_n^-}\right)\Big|\mathcal{G}_n\right].
\end{align*}
And because of the centering property \eqref{centeringproperty} and the pairwise independence, if $I_n\ne j$ and $I_n\ne k$:
\begin{align*}\E\left[b\left(\overline{Z}^{I_n}_{T_n^-},\overline{Z}^j_{T_n^-}\right)b\left(\overline{Z}^{I_n}_{T_n^-},\overline{Z}^k_{T_n^-}\right)\Big|\mathcal{G}_n\right]&=\E\left[b\left(\overline{Z}^{I_n}_{T_n^-},\overline{Z}^j_{T_n^-}\right)\E\left[b\left(\overline{Z}^{I_n}_{T_n^-},\overline{Z}^k_{T_n^-}\right)\Big|\overline{Z}^{I_n}_{T_n^-},\overline{Z}^j_{T_n^-}\right]\Big|\mathcal{G}_n\right]\\
&=0,\end{align*}
which implies that there are only $\OO(N)$ non-zero terms in the sum. Since these terms are uniformly bounded, we deduce:
\begin{equation}\label{VII}\E\Big[e^{I_n}_{T_n^-}\Big|\mathcal{G}_n\Big]\leq C\sqrt{\frac{\|K\|_{L^\infty}}{N}}.\end{equation}

\begin{rem}
Note that in the second step we use only the bound \eqref{wassersteinboundM} which is weaker than the assumption  \eqref{fluxlipschitz}. The assumption \eqref{fluxlipschitz} is only crucial in this proof to obtain the control of the error term in $1/\sqrt{N}$. However, a similar conclusion could be reached by applying more general quantitative results on the convergence in Wasserstein distance of the empirical measures of i.i.d random variables towards their common law, see for example \cite{fournier2015rate}. This approach would require the adaptation of the existing results to our geometrical framework and we therefore choose to assume \eqref{fluxlipschitz} and use the more straightforward computation presented in step 3 as in \cite{sznitman89} and since it corresponds to the models \cite{toto,dimarcomotsch16}. 
\end{rem}

\noindent\textbf{Step 4.} \textit{Gronwall lemma} \\ 

From the \eqref{VI} and \eqref{VII} we conclude that for all $n\geq1$,
\begin{equation}\label{recurrence}
\E[Y_{T_n}|\mathcal{G}_n]\leq(1+\lambda S_n)\left(1+\frac{\sigma(K)}{N}\right)\E[Y_{T_{n-1}}|\mathcal{G}_{n-1}]+ C\frac{\theta\big(\|K\|_{L^\infty}\big)\sqrt{\|K\|_{L^\infty}}}{\sqrt{N}}\end{equation}
and $Y_0=0$. 
This is a discrete Gronwall-type inequality and using the inequality $1+x\leq e^x$, it can be seen by induction that for all $n\geq1$: 
\begin{equation}\label{gronwall}
\E[Y_{T_n}|\mathcal{G}_n]\leq C\frac{\theta\big(\|K\|_{L^\infty}\big)\sqrt{\|K\|_{L^\infty}}}{\sqrt{N}}\,\sum_{k=0}^{n-1}\e^{\frac{\sigma(K)}{N}k+\left(\lambda+\frac{\sigma(K)}{N}\right)(T_n-T_{n-k})}.
\end{equation}

\noindent\textbf{Step 5.} \textit{Conclusion : bound at time $t>0$}\\

In order to bound $\E[Y_t]$, we first look for a bound on $\E[Y_{T_{N_t}}]$ where
\[N_t = \sup\{n\in\N,\,\,T_n\leq t\}.\]
The random variable $N_t$ has a Poisson distribution of parameter $Nt$ \cite[Chapter 8.1.2]{bremaud} and in particular $\PP(N_t=n)=\frac{(Nt)^n}{n!}\e^{-Nt}$. Since everything is deterministic on $[T_{N_t},t)$, we will then use the fact that: 
\begin{equation}\label{VIII}\E[Y_t]\leq (1+\lambda t) \E[Y_{T_{N_t}}].\end{equation}
Using the relation \eqref{gronwall}, we first have:
\begin{align*}
\E[Y_{T_n}\1_{N_t=n}] &= \E\Big[\E\left[Y_{T_n}\1_{N_t=n}\big|\mathcal{G}_{n+1}\right]\Big]\\
& =  \E\Big[\1_{N_t=n}\E\left[Y_{T_n}\big|\mathcal{G}_{n+1}\right]\Big]\\
&= \E\Big[\1_{N_t=n}\E\left[Y_{T_n}\big|\mathcal{G}_{n}\right]\Big]\\
&\leq C\frac{\theta\big(\|K\|_{L^\infty}\big)\sqrt{\|K\|_{L^\infty}}}{\sqrt{N}}\E\left[\sum_{k=0}^{n-1} \e^{\frac{\sigma(K)}{N}k+\left(\lambda+\frac{\sigma(K)}{N}\right)t}\1_{N_t=n}\right]\\
&\leq C\frac{\theta\big(\|K\|_{L^\infty}\big)\sqrt{\|K\|_{L^\infty}}}{\sqrt{N}}\e^{\left(\lambda+\frac{\sigma(K)}{N}\right)t}\frac{\e^{\frac{\sigma(K)n}{N}}}{\e^{\frac{\sigma(K)}{N}}-1}\PP(N_t=n)
\end{align*}
And therefore,
\begin{align*}
\E\left[Y_{T_{N_t}}\right]&=\sum_{n=0}^\infty \E[Y_{T_n}\1_{N_t=n}]\\
&\leq C\frac{\theta\big(\|K\|_{L^\infty}\big)\sqrt{\|K\|_{L^\infty}}}{\sqrt{N}}\frac{\e^{\left(\lambda+\frac{\sigma(K)}{N}\right)t}}{\e^{\frac{\sigma(K)}{N}}-1}\sum_{n=0}^\infty \e^{\frac{\sigma(K)}{N}n}\PP(N_t=n) \\ 
&\leq C\frac{\theta\big(\|K\|_{L^\infty}\big)\sqrt{\|K\|_{L^\infty}}}{\sqrt{N}}\frac{\e^{\left(\lambda+\frac{\sigma(K)}{N}\right)t}}{\e^{\frac{\sigma(K)}{N}}-1}\exp\left(Nt\left(\e^{\frac{\sigma(K)}{N}}-1\right)\right).
\end{align*}
Finally, from this last expression and expression \eqref{VIII} we conclude that
\begin{multline}\label{IX}\frac{1}{N}\E[Y_t]\leq (1+\lambda t)\frac{1}{N}\E\left[Y_{T_{N_t}}\right]\\
\leq C\frac{\theta\big(\|K\|_{L^\infty}\big)\sqrt{\|K\|_{L^\infty}}}{\sqrt{N}}\frac{\e^{\left(2\lambda+\frac{\sigma(K)}{N}\right)t}}{N\left(\e^{\frac{\sigma(K)}{N}}-1\right)}\exp\left(Nt\left(\e^{\frac{\sigma(K)}{N}}-1\right)\right)\end{multline}
where we have used the fact that $1+\lambda t\leq\e^{\lambda t}$. To prove the first point of Theorem \ref{propagationofchaostheorem} we notice that 
\begin{align*}
\E[W^1(\hat{\mu}^N_t,f_t)] &\leq \E[W^1(\hat{\mu}^N_t,\overline{\mu}^N_t)]+\E[W^1(\overline{\mu}^N_t,f_t)]\\ 
&\leq \frac{1}{N}\E[Y_t]+\E[W^1(\overline{\mu}^N_t,f_t)]
\end{align*}
where the bound between the two empirical measures is a consequence of the Kantorovich dual formulation \eqref{kantorovich} of the Wasserstein distance. Since $\E[W^1(\overline{\mu}^N_t,f_t)]\to0$ as $N\to+\infty$ by independence of the McKean processes, the result follows from \eqref{IX}.\\

The second point of Theorem \ref{propagationofchaostheorem} follows from the definition of the Wasserstein distance which implies that
\[W^1(\mu^N_t,f_t)\leq\frac{1}{N}\E[Y_t].\]
The final bound is obtained from the right-hand side of \eqref{IX} and the fact that for all $x\geq0$, $x\leq \e^{x}-1\leq x\e^x$.  
\end{proof}

\begin{rem}
The above proof is based on the same classical coupling arguments of \cite{sznitman89} but in a PDMP framework. This type of proof is rather elementary and allows an explicit control of the error. In the context of jump processes, another natural approach would be to re-write the IBM as a system of stochastic differential equations with respect to a Poisson measure. The adaptation of the propagation of chaos result in this framework and the topological questions that it may raise are left for future work. 
\end{rem}

\section{Moderate interaction (proof of Theorem \ref{moderateinteractiontheorem})}\label{moderateinteractionsection}

This section is devoted to the proof of Theorem \ref{moderateinteractiontheorem}. We follow the approach of \cite{jourdainmeleard}. Given a sequence of kernels $(K^N)_N$, the goal is to apply Ascoli-Arzel\`a's theorem to show the convergence in $C([0,T],B_a)$ of a subsequence of the sequence of solutions of  \eqref{bgkkernel} associated to $(K^N)_N$ towards the unique solution of \eqref{bgkdelta}. To do so, we will first need to prove some properties on the associate sequence of solutions of \eqref{bgkkernel} (Lemma \ref{lemmapropertiesfN}) and in particular an equicontinuity property. We will then prove a compactness result (Lemma \ref{rfkmanifold}) which is an adaptation of the classical Riesz-Fréchet-Kolmogorov to our geometrical context and which will be needed to prove the compactness of the sequence at a fixed time (pointwise compactness, Lemma \ref{pointwisecompactness}). The proof of Theorem \ref{moderateinteractiontheorem} can be found in Section \ref{proofmoderateinteractionsection}. 

\subsection{First estimates}\label{firstestimatessubsection}

From now, we fix $a>0$ and $T>0$ such that \eqref{bgkdelta} is wellposed on $C([0,T],B_a)$ where $B_a$ is the ball of radius $a>0$ in $L^\infty\cap\widetilde{\mathcal{P}}(\R^d\times\MM)$. A key remark is tha, if $g\in B_a$ then for all $N\in\N$ and all $x\in\R^d$, 
\[|J_{K^N*g}(x)|\leq \iint_{\R^d\times\MM} K^N(x-y)|g(y,m)|\dd y\dd m\leq a,\]
since the integral of $K^N$ is equal to 1 and where we have used that $|m|\leq1$ for all $m\in\mathscr{M}$. As a consequence, the solution $f^N$ of \eqref{bgkkernel} with kernel $K^N$ and initial condition $f_0$ belongs to $C([0,T],B_a)$ as it can be seen by rewriting the proof of Proposition \ref{wellposednessdelta} for the interaction law with kernel $K^N$. \\

\begin{lemma}\label{lemmapropertiesfN} The sequence $(f^N)_N\in C([0,T],B_a)$ satisfies the following properties.
\begin{enumerate}[(i)]
\item (space-translation stability) Let $h\in\R^d$, then 
\begin{equation}\label{spacetranslationstability}\sup_{t\in[0,T]}\|\tau_{(h,id)}f^N_t-f^N_t\|_{L^1(\R^d\times\MM)}\leq C(a,T)\|\tau_{(h,id)}f_0-f_0\|_{L^1(\R^d\times\MM)},\end{equation}
where the translation operator $\tau$ is defined for a measurable function $g$ on $\R^d\times\MM$, $h\in\R^d$ and $\psi\in C(\MM,\MM)$ by $(\tau_{(h,\psi)}g)(x,m):=g(x+h,\psi(m))$.
\item (tightness) There exists $\beta>0$ such that for $R>0$ sufficiently big, then 
\begin{equation}\label{tightness}\sup_{t\in[0,T]}\iint_{\{|x|\geq R\}\times \MM} f^N_t(x,m)\dd x\dd m\leq C(T) \iint_{\{|x|\geq R-\beta T\}\times \MM} f_0(x,m)\dd x\dd m.\end{equation}
\item (equicontinuity) Let $t\in[0,T)$ and $h>0$ sufficiently small, then 
\begin{equation}\label{equicontinuity}\|f^N_{t+h}-f^N_t\|_{L^1(\R^d\times\MM)}\leq  C(a,T)\Big(\|\mathsf{T}_hf_0-f_0\|_{L^1(\R^d\times\MM)}+\varepsilon(h)\Big),\end{equation}
where $\varepsilon(h)\to0$ uniformly in $h$ and $\mathsf{T}_h$ is the free-transport operator defined by \eqref{freetransportoperator}. 
\end{enumerate}
\end{lemma}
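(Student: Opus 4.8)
All three bounds will be derived from the mild formulation \eqref{mildsolution}, using repeatedly that on $B_a$ the nonlinearity is Lipschitz with an $N$-independent constant. Indeed, since $|J_{K^N*g}(x)|\le a$ and $\rho_g(x)\le a$ for every $g\in B_a$ (as observed in Subsection~\ref{firstestimatessubsection}), rerunning the proof of Lemma~\ref{GKbounds} (equivalently of \eqref{Glipschitzl1}) gives $\|G^{K^N}_f-G^{K^N}_g\|_{L^1}\le\kappa_a\|f-g\|_{L^1}$ with $\kappa_a:=\alpha(a)+a\theta(a)$, for all $N$ and all $f,g\in B_a$. I will also use that $\mathsf{T}_t$ is an $L^1$-isometry, that $\mathsf{T}_s\mathsf{T}_t=\mathsf{T}_{s+t}$, that a pure space translation $\tau_{(h,id)}$ commutes with $\mathsf{T}_t$, and the intertwining identity $\tau_{(h,id)}G^{K^N}_g=G^{K^N}_{\tau_{(h,id)}g}$ (immediate from $\rho_{\tau_{(h,id)}g}(x)=\rho_g(x+h)$ and $J_{K^N*\tau_{(h,id)}g}(x)=J_{K^N*g}(x+h)$). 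I write $\beta:=\sup_{m\in\MM}|\Phi(m)|$, which is finite by compactness of $\MM$. For \eqref{spacetranslationstability}, I would apply $\tau_{(h,id)}$ to the mild identity for $f^N$, use the two commutation properties, and subtract the mild identity for $f^N$ itself, obtaining
\[\tau_{(h,id)}f^N_t-f^N_t=\e^{-t}\mathsf{T}_t\bigl(\tau_{(h,id)}f_0-f_0\bigr)+\int_0^t\e^{-(t-s)}\mathsf{T}_{t-s}\bigl(G^{K^N}_{\tau_{(h,id)}f^N_s}-G^{K^N}_{f^N_s}\bigr)\,\dd s.\]
Taking $L^1$ norms, using the $L^1$-isometry of $\mathsf{T}$ and the $N$-uniform Lipschitz bound (legitimate since $\tau_{(h,id)}f^N_s\in B_a$), Grönwall's lemma gives \eqref{spacetranslationstability} with $C(a,T)=\e^{\kappa_a T}$.

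For the tightness \eqref{tightness}, the idea is that, after integrating the equation in $m$, the spatial marginal $\rho_{f^N_t}$ obeys a continuity equation with velocity field of modulus $\le\beta$ (using $\int_\MM M_J\,\dd m=1$), so mass propagates at finite speed $\le\beta$. I would make this rigorous by induction on the Picard iterates $f^{N,(k)}$ defining $f^N$: assuming $\iint_{\{|x|\ge R'\}\times\MM}f^{N,(k-1)}_s\le\iint_{\{|x|\ge R'-\beta s\}\times\MM}f_0$ for all $R',s$, one applies $\mathsf{T}_t$ and $\mathsf{T}_{t-s}$ and changes variables (so that $|x+(t-s)\Phi(m)|\ge R$ forces $|x|\ge R-\beta(t-s)$) and integrates out $m$ against $M_{K^N*f^{N,(k-1)}_s}[\cdot]$; the free-transport part and the gain part of $f^{N,(k)}_t$ over $\{|x|\ge R\}$ are then $\le\e^{-t}$ and $\le(1-\e^{-t})$ times $\iint_{\{|x|\ge R-\beta t\}\times\MM}f_0$ respectively, so they sum to $\iint_{\{|x|\ge R-\beta t\}\times\MM}f_0$. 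Letting $k\to\infty$ in $C([0,T],L^1)$ and relaxing $R-\beta t$ to $R-\beta T$ gives \eqref{tightness} with $C(T)=1$.

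For the equicontinuity \eqref{equicontinuity}, I would first split the time integral in \eqref{mildsolution} at $s=t$ using $\mathsf{T}_{t+h}=\mathsf{T}_h\mathsf{T}_t$, so that $f^N_{t+h}=\e^{-h}\mathsf{T}_hf^N_t+\int_t^{t+h}\e^{-(t+h-s)}\mathsf{T}_{t+h-s}G^{K^N}_{f^N_s}\,\dd s$ and, since $\|f^N_s\|_{L^1}=\|G^{K^N}_{f^N_s}\|_{L^1}=1$, $\|f^N_{t+h}-f^N_t\|_{L^1}\le\|\mathsf{T}_hf^N_t-f^N_t\|_{L^1}+2(1-\e^{-h})$. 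Next, the mild identity for $f^N_t$ together with $\mathsf{T}_h\mathsf{T}_s=\mathsf{T}_s\mathsf{T}_h$ gives $\mathsf{T}_hf^N_t-f^N_t=\e^{-t}\mathsf{T}_t(\mathsf{T}_hf_0-f_0)+\int_0^t\e^{-(t-s)}\mathsf{T}_{t-s}(\mathsf{T}_hG^{K^N}_{f^N_s}-G^{K^N}_{f^N_s})\,\dd s$, whose first term is $\le\|\mathsf{T}_hf_0-f_0\|_{L^1}$. The delicate point is to bound $\|\mathsf{T}_hG^{K^N}_g-G^{K^N}_g\|_{L^1}$ for $g\in B_a$ \emph{uniformly in $N$}: splitting $G^{K^N}_g=\rho_g\,M_{K^N*g}[\cdot]$, the $\rho$-part is handled by $\|M_{K^N*g}[\cdot]\|_{L^\infty}\le\alpha(a)$ and gives $\alpha(a)\sup_{|v|\le\beta h}\|\tau_{(v,id)}g-g\|_{L^1}$; for the $M$-part, \eqref{fluxlipschitz} reduces it to the $L^1$ spatial modulus of $J_{K^N*g}$, and the observation $J_{K^N*g}=K^N*j_g$ with $j_g(x):=\int_\MM m\,g(x,m)\,\dd m$, $\|j_g\|_{L^1}\le1$, combined with Young's inequality $\|K^N*w\|_{L^1}\le\|K^N\|_{L^1}\|w\|_{L^1}=\|w\|_{L^1}$, transfers that modulus onto $j_g$, hence onto $g$, and yields $a\theta(a)\sup_{|v|\le\beta h}\|\tau_{(v,id)}g-g\|_{L^1}$. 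Taking $g=f^N_s$ and invoking part (i) bounds everything by $\kappa_aC(a,T)\sup_{|v|\le\beta h}\|\tau_{(v,id)}f_0-f_0\|_{L^1}$, so \eqref{equicontinuity} holds with $\varepsilon(h):=h+\sup_{|v|\le\beta h}\|\tau_{(v,id)}f_0-f_0\|_{L^1}\to0$ as $h\to0$, uniformly in $N$ and $t$ by continuity of translation in $L^1(\R^d\times\MM)$.

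The only genuine obstacle is this uniform-in-$N$ estimate in (iii): bounding $\mathsf{T}_hG^{K^N}_g-G^{K^N}_g$ through the spatial Lipschitz constant of $x\mapsto M_{K^N*g}[x]$ produces a diverging factor $\|K^N\|_{\mathrm{Lip}}=\varepsilon_N^{-(d+1)}$, so one is forced to keep the convolution structure $J_{K^N*g}=K^N*j_g$ and let Young's inequality carry the modulus of continuity on $g$ (which, by part (i), is controlled uniformly in $N$). The rest is routine Duhamel-and-Grönwall bookkeeping.
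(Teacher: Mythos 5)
Your argument is correct and follows essentially the same route as the paper's proof: Duhamel's formula combined with the commutation identities $\tau_{(h,id)}\mathsf{T}_t=\mathsf{T}_t\tau_{(h,id)}$, $\tau_{(h,id)}G^{K^N}_g=G^{K^N}_{\tau_{(h,id)}g}$, the $N$-uniform Lipschitz bound for $G^{K^N}$ on $B_a$, and --- the key point, which you correctly isolate --- handling $\mathsf{T}_hG^{K^N}_g-G^{K^N}_g$ by keeping the convolution structure $J_{K^N*g}=K^N*j_g$ and using $\|K^N\|_{L^1}=1$ (Young/Fubini) so that no $\|K^N\|_{\mathrm{Lip}}$ factor appears, then closing with part (i). The only deviation is in (ii), where you run an induction on the Picard iterates instead of the paper's direct Gronwall on $\mathcal{I}^N_R$; both rest on the same finite-speed-of-propagation observation, and your bookkeeping even yields the slightly cleaner constant $C(T)=1$.
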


\begin{proof} Recall that $f^N$ is given by Duhamel's formula: 
\begin{multline*}f_t^N(x,m)=\e^{-t}f_0(x-t\Phi(m),m)\\
+\int_{0}^t \e^{-(t-s)} \rho_{f^N_s}(x-(t-s)\Phi(m))M_{K^N*f^N_s}[x-(t-s)\Phi(m)](m)\,\dd s.\end{multline*}
Or equivalently: 
\[f_t^N=\e^{-t}\mathsf{T}_tf_0+\int_0^t \e^{-(t-s)}\, \mathsf{T}_{t-s}G^{K}_{f^N_s}\,\dd s,\]
where
\[G^K_f(x,m) :=\rho_f(x)M_{K^N*f}[x](m).\]
\begin{enumerate}[(i)]
\item Since $\tau_{(h,id)}\mathsf{T}_t=\mathsf{T}_t\tau_{(h,id)}$ and $\tau_{(h,id)}G^K_f=G^K_{\tau_{(h,id)}f}$, the conclusion follows from the stability estimate obtained by Duhamel formula by using the Lipschitz bound \eqref{Glipschitzl1} : 
\[\|G^K_f-G^K_g\|_{L^1(\R^d\times\MM)}\leq (\alpha(a)+a\theta(a))\|f-g\|_{L^1(\R^d\times\MM)}\]
and Gronwall's lemma. 
\item For a given $R>0$
\begin{multline*}
\iint_{\{|x|\geq R\}\times \MM} f^N_t(x,m)\dd x\dd m=\e^{-t}\iint_{\{|x|\geq R\}\times \MM} f_0(x-t\Phi(m),m)\dd x \dd m \\
+\int_0^t \e^{-(t-s)} \iint_{\{|x|\geq R\}\times \MM} G^K_{f^N_s}(x-(t-s)\Phi(m),m)\dd x\dd m\,\dd s.
\end{multline*}
We define for $0\leq s\leq t$ and for $R>0$ sufficiently big: 
\[\mathcal{I}^N_R(s):=\iint_{\{|x|\geq R-\beta(t-s)\}\times \MM}f^N_s(x,m)\dd x \dd m\]
where $\beta>0$ is such that $|\Phi(m)|\leq\beta$ for all $m\in \MM$. For fixed $m\in\MM$ and $s\in[0,t]$, with the change of variables $x\mapsto x-(t-s)\Phi(m)$, it holds that: 
\begin{multline*}\int_{|x|\geq R} G^K_{f^N_s}(x-(t-s)\Phi(m),m)\dd x 
= \int_{|x+(t-s)\Phi(m)|\geq R} \rho_{f^N_s}(x)M_{K^N*f^N_s}[x](m)\dd x
\\ \leq \int_{|x|\geq R-\beta(t-s)} \rho_{f^N_s}(x)M_{K^N*f^N_s}[x](m)\dd x,
\end{multline*}
where the last inequality comes from the fact that $\{x,\,\,|x+(t-s)\Phi(m)|\geq R\}\subset\{x,\,\,|x|\geq R-\beta(t-s)\}$. Integrating this expression on $\MM$ and using the fact that $M_{K^N*f^N_s}[x]$ is a probability density function, we obtain: 
\[\iint_{\{|x|\geq R\}\times\MM} G^K_{f^N_s}(x-(t-s)\Phi(m),m)\dd x \leq \mathcal{I}^N_{R}(s).\]
Then, since $e^{-t}\leq 1$, it holds that 
\[\mathcal{I}^N_R(t)\leq \mathcal{I}^N_R(0)+\int_0^t \mathcal{I}^N_R(s)\e^s\dd s\]
from which we can conclude using Gronwall lemma that: 
\[\mathcal{I}^N_R(t)=\iint_{\{|x|\geq R\}\times \MM} f^N_t(x,m)\dd x\dd m\leq C(t)\iint_{\{|x|\geq R-\beta t\}\times \MM} f_0(x,m)\dd x\dd m=\mathcal{I}^N_R(0).\]
\item We have: 
\begin{multline*}\|f^N_{t+h}-f^N_t\|_{L^1(\R^d\times\MM)}\leq\e^{-t}\|\mathsf{T}_hf_0-f_0\|_{L^1(\R^d\times\MM)}\\
+\int_0^t\e^{-(t-s)}\|\mathsf{T}_{h}G^K_{f^N_s}-G^K_{f^N_s}\|_{L^1(\R^d\times\MM)}+\varepsilon(|h|)\end{multline*}
where $\varepsilon(|h|)\to0$ and depends only on $a$ and $T$ (it comes from the terms $\e^{-(t+h)}$ instead of $\e^{-t}$ and the integral between $t$ and $t+h$). Then it holds that
\begin{align*}
&\|\mathsf{T}_{h}G^K_{f^N_s}-G^K_{f^N_s}\|_{L^1(\R^d\times\mathscr{M})}\\
&\hspace{1cm}=\iint_{\R^d\times \MM} \Big|\int_{\MM} f^N_s\left(x-h\Phi(m),m'\right)M_{K^N*f^N_s}\left[x-h\Phi(m)\right](m)\\
&\hspace{5cm}-f^N_s(x,m')M_{K^N*f^N_s}\left[x\right](m)\dd m'\Big|\dd x\dd m\\
&\hspace{1cm}\leq\int_{\MM}\Big\{ \iint_{\R^d\times \MM}  \Big|f^N_s\left(x-h\Phi(m),m'\right)M_{K^N*f^N_s}\left[x-h\Phi(m)\right](m)\\
&\hspace{5cm}-f^N_s(x,m')M_{K^N*f^N_s}\left[x\right](m)\Big|\dd m'\dd x\Big\}\dd m\\
\end{align*}
And it holds that for a given $m\in \MM$: 
\begin{align*}
&\iint_{\R^d\times \MM}  \Big|f^N_s\left(x-h\Phi(m),m'\right)M_{K^N*f^N_s}\left[x-h\Phi(m)\right](m)\\
&\hspace{7cm}-f^N_s(x,m')M_{K^N*f^N_s}\left[x\right](m)\Big|\dd m'\dd x\\
&\hspace{0.2cm}\leq
\alpha(a)\left\|(\tau_{(-h\Phi(m),id)}f^N_s)-f^N_s\right\|_{L^1(\R^d\times\MM)}\\
&\hspace{1cm}+Ca\theta(a)\iint_{\R^d\times\MM}\iint_{\R^d\times\MM} K^N(y)\big|f^N_s(x-h\Phi(m)-y,m'')\\
&\hspace{7.5cm}-f^N_s(x-y,m'')\big||m''|\dd y\dd m''\dd x\dd m'.
\end{align*}
Since $|m''|\leq1$ and $\int_\mathscr{M}\dd m'=1$, by switching the integrals in $x$ and in $y$ (Fubini's theorem), with the change of variable $x'=x-y$ and since the integral of $K^N$ is equal to 1, it holds that: 
 \begin{multline*}
 \iint_{\R^d\times\MM}\iint_{\R^d\times\MM} K^N(y)\left|f^N_s(x-h\Phi(m)-y,m'')-f^N_s(x-y,m'')\right||m''|\dd y\dd m''\dd x\dd m'\\
 \leq \left\|\tau_{(-h\Phi(m),id)}f^N_s-f^N_s\right\|_{L^1(\R^d\times\MM)}.
 \end{multline*}
Gathering everything and using the space-translation stability property, we obtain: 
\[\|\mathsf{T}_{h}G^K_{f^N_s}-G^K_{f^N_s}\|_{L^1(\R^d\times\mathscr{M})}\leq c(a,T)\int_{\mathscr{M}} \|\tau_{(-h\Phi(m),id)}f_0-f_0\|_{L^1(\R^d\times\mathscr{M})}\dd m,\]
where $c(a,T)>0$ is a constant which depends only on $a$, $T>0$ and $\MM$. This last term vanishes uniformly on $h$ by the dominated convergence theorem and the continuity of the shift operator in $L^1$ (as it can be seen by approximation by smooth functions \cite[p.79]{aubin}).
\end{enumerate}
\end{proof}

\begin{rem}
These properties can be understood at the level of the processes $\overline{Z}_t$. In particular, we can deduce some of them from \cite{monmarche}: 
\begin{enumerate}[$\bullet$]
\item Since $\tau_{(h,id)}$ commutes with everything, the space-translation stability follows from the stability result \cite[Theorem 3]{monmarche}.
\item For the equicontinuity, starting from $f_0$, let's run the process up to time $h$. Since everything is Markovian, we can consider the two nonlinear processes starting from $f_0$ and $f_h$. Their laws at time $t$ are respectively $f_t$ and $f_{t+h}$. The stability result \cite[Theorem 3]{monmarche} shows that: 
\[\|f_{t+h}-f_t\|_{TV}\leq C(a,T)\|f_h-f_0\|_{TV}.\]
To conclude, it is enough to write: 
\[\|f_h-f_0\|_{TV}\leq \|\mathsf{T}_hf_0-f_0\|_{TV}+\|f_h-\mathsf{T}_hf_0\|_{TV}\]
and to notice that:
\[\|f_h-\mathsf{T}_h f_0\|_{TV}\leq \PP(\text{there is a jump on }[0,h])=\OO(h).\]
\end{enumerate}
\end{rem}

\subsection{A compactness result}

We first state a compactness lemma, which is the analog of the Riesz-Fréchet-Kolmogorov theorem in a Riemannian setting. The proof can be found in Appendix \ref{proofRFK}. The ideas of the proof come from \cite[Chapter 2]{hebey2000nonlinear}. When $\omega$ and $\Omega$ are two open sets in a metric space, we write $\omega\subset\subset\Omega$ when $\omega\subset\Omega$, $\overline{\omega}\subset\Omega$ and $\overline{\omega}$ is compact where $\overline{\omega}$ is the closure of $\omega$ in the ambiant metric space. 
\begin{lemma}\label{rfkmanifold}
Let $\mathcal{F}\subset L^1(\R^d\times\MM)$ be a bounded subset which satisfies the following properties.
\begin{enumerate}
\item For all $\varepsilon>0$, there exists $R>0$ such that:
\[\forall f\in\mathcal{F},\,\,\,\iint_{\{|x|\geq R\}\times\MM} f(x,m)\dd x\dd m <\varepsilon.\]
\item For all $\varepsilon>0$ and for all $\omega\subset\subset\Omega\subset\MM$ open sets, there exists $\eta >0$ such that for any smooth function $\phi : \omega\to\Omega$ which satisfies
\[\forall m\in\omega,\,\,\,d(\phi(m),m)<\eta,\]
and for all $h\in\R^d$ such that $|h|<\eta$, it holds that:
\[\forall f\in\mathcal{F},\,\,\,\iint_{\R^d\times\omega}|f(x+h,\phi(m))-f(x,m)|\dd x\dd m <\varepsilon.\]
\end{enumerate}
Then $\mathcal{F}$ is relatively sequentially compact in $L^1(\R^d\times\MM)$. 
\end{lemma}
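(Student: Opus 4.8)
The plan is to reduce the statement to the classical Kolmogorov--Riesz--Fr\'echet compactness criterion in Euclidean space by using a finite atlas of $\MM$ together with a subordinate partition of unity, so that everything can be read in local coordinates where the Euclidean theorem applies. Since $L^1(\R^d\times\MM)$ is a complete metric space, it suffices to produce, out of any sequence in $\mathcal F$, a Cauchy (hence convergent) subsequence, and the localized families will each be relatively compact by the Euclidean result.

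First I would fix a finite family of charts $(\Omega_k,\psi_k)_{k=1}^{K}$ with $\psi_k:\Omega_k\to\R^p$ a diffeomorphism onto its (bounded) image, together with open sets $\omega_k\subset\subset\Omega_k$ still covering $\MM$ and a smooth partition of unity $(\chi_k)_k$ with $\Supp\chi_k\subset\omega_k$ and $\sum_k\chi_k\equiv1$ on $\MM$. Every $f\in\mathcal F$ then splits as $f=\sum_k\chi_k f$, and since a finite sum of relatively compact families of $L^1$ functions is relatively compact, it is enough to show that for each fixed $k$ the family $\{\chi_k f:f\in\mathcal F\}$ is relatively compact in $L^1(\R^d\times\Omega_k)$. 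Transporting this family through $\mathrm{id}\times\psi_k$, and incorporating the smooth Jacobian $\sqrt{\det g}$ of the volume form read in the chart, produces a bounded family $\{g^f_k\}\subset L^1(\R^d\times\R^p)$ whose supports in the last $p$ variables all lie in the fixed compact set $\overline{\psi_k(\omega_k)}$; because $\sqrt{\det g}$ is smooth and bounded away from $0$ and $\infty$ on $\overline{\omega_k}$, this change of variables is a bi-Lipschitz correspondence between the relevant $L^1$ spaces and therefore preserves relative compactness.

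It then remains to verify the two hypotheses of the Euclidean Kolmogorov--Riesz--Fr\'echet theorem for $\{g^f_k\}$: tightness in $\R^{d+p}$, which follows from hypothesis (1) of the lemma in the $x$-variable and from the fixed compact support in the $m$-variable; and equicontinuity of translations, i.e.\ $\sup_f\|\tau_{(h,v)}g^f_k-g^f_k\|_{L^1(\R^d\times\R^p)}\to0$ as $(h,v)\to0$ in $\R^d\times\R^p$. For the latter I would read a small Euclidean translation by $v\in\R^p$ on the manifold side: for $|v|$ smaller than a chart-dependent threshold $\eta_k$ (there are finitely many charts), the map $\phi^v_k(m):=\psi_k^{-1}(\psi_k(m)+v)$ is well defined and smooth on $\omega_k$, takes values in $\Omega_k$ (since $\overline{\psi_k(\omega_k)}$ is compact inside the open set $\psi_k(\Omega_k)$), and satisfies $d(\phi^v_k(m),m)\le C_k|v|$. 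Pulling back $\|\tau_{(h,v)}g^f_k-g^f_k\|_{L^1}$ and splitting it into the contribution of moving the cutoff, of order $\|\nabla\chi_k\|_\infty(|h|+|v|)\|f\|_{L^1}$, of the Jacobian variation, of the same order, and of $\|\tau_{(h,\phi^v_k)}f-f\|_{L^1(\R^d\times\omega_k)}$, hypothesis (2) applied with $\omega=\omega_k$, $\Omega=\Omega_k$, $\phi=\phi^v_k$ and translation $h$ makes the last term uniformly small; the classical theorem then gives relative compactness of $\{g^f_k\}$ in $L^1(\R^d\times\R^p)$, hence of $\{\chi_k f\}$ in $L^1(\R^d\times\Omega_k)$, and summing over $k$ concludes.

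The main obstacle is the bookkeeping forced by the fact that hypothesis (2) is stated only for maps defined on $\omega\subset\subset\Omega$ rather than globally: one has to choose the partition of unity so that each $\chi_k$ is supported strictly inside the region where $\phi^v_k$ is defined and lands in $\Omega_k$, and to keep $|v|$ below the corresponding chart-dependent threshold, so that hypothesis (2) is genuinely applicable. Once this localization is set up correctly, the remaining arguments---the commutator estimates for the cutoff, the harmlessness of the Jacobian weight, and the passage through the Euclidean Riesz--Fr\'echet--Kolmogorov theorem---are routine.
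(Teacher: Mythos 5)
Your proposal is correct and follows essentially the same route as the paper's proof in Appendix A: a finite atlas with a subordinate partition of unity, transport to local coordinates where the metric determinant is bounded above and below, verification of the Euclidean Riesz--Fr\'echet--Kolmogorov hypotheses (with tightness from hypothesis (1) and translation equicontinuity obtained by reading the chart translation $v\mapsto\psi_k^{-1}(\psi_k(\cdot)+v)$ as a map $\phi$ to which hypothesis (2) applies), and a diagonal extraction summed over the charts. The only cosmetic differences are that you control the moving cutoff by $\|\nabla\chi_k\|_\infty$ where the paper invokes uniform continuity, and that you carry the Jacobian weight inside the localized family rather than only using its two-sided bound to compare norms.
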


Using this lemma, it can be shown that for every $t\in[0,T]$, the sequence $(f_t^N)_N$ is compact in $L^1(\R^d\times \MM)$.

\begin{lemma}[Pointwise compactness]\label{pointwisecompactness} Let $t\in[0,T]$ be a given time. Then the sequence $(f^N_t)_N$ of solutions of \eqref{bgkkernel} associated to the sequence of kernels $(K^N)_N$ is compact in $L^1(\R^d\times\MM)$. 
\end{lemma}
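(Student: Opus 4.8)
The plan is to apply the compactness criterion of Lemma \ref{rfkmanifold} to the family $\mathcal{F}:=\{f^N_t:N\in\N\}\subset L^1(\R^d\times\MM)$. This family is bounded (each $f^N_t$ has mass $1$), so only the two structural conditions need checking. Condition~(1) (uniform smallness of the mass outside a large ball in $x$) is immediate from the tightness estimate \eqref{tightness}: given $\varepsilon>0$, since $f_0\in L^1$ one picks $R_0$ with $\iint_{\{|x|\ge R_0\}\times\MM}f_0<\varepsilon/C(T)$ and then $R:=R_0+\beta T$ works uniformly in $N$. The substance is condition~(2).

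For condition~(2), fix $\omega\subset\subset\Omega\subset\MM$ and $\varepsilon>0$, and split the joint translation into its $\R^d$-part and its $\MM$-part,
\[|f^N_t(x+h,\phi(m))-f^N_t(x,m)|\le |f^N_t(x+h,\phi(m))-f^N_t(x,\phi(m))|+|f^N_t(x,\phi(m))-f^N_t(x,m)|.\]
Integrating the first term over $\R^d\times\omega$, and using that the maps $\phi$ produced in the proof of Lemma \ref{rfkmanifold} can be taken with Jacobian bounded near $1$, it is controlled by $C\,\|\tau_{(h,id)}f^N_t-f^N_t\|_{L^1(\R^d\times\MM)}$, hence by $C(a,T)\|\tau_{(h,id)}f_0-f_0\|_{L^1}$ thanks to the space-translation stability \eqref{spacetranslationstability}; this tends to $0$ as $|h|\to0$, uniformly in $N$, by $L^1$-continuity of translations.

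The $\MM$-part $\iint_{\R^d\times\omega}|f^N_t(x,\phi(m))-f^N_t(x,m)|\,\dd x\,\dd m$ has no such ``follow-the-flow'' structure and must be treated directly through Duhamel's formula $f^N_t=\e^{-t}\mathsf{T}_tf_0+\int_0^t\e^{-(t-s)}\mathsf{T}_{t-s}G^K_{f^N_s}\,\dd s$. In the transported initial-data term, replacing $m$ by $\phi(m)$ shifts the position argument only by $t(\Phi(m)-\Phi(\phi(m)))$, of norm $\le T\lambda\eta$, so this term is bounded by the joint $L^1$-modulus of continuity of the fixed function $f_0$ (scale $T\lambda\eta$ in $x$, scale $\eta$ in $\MM$), which vanishes as $\eta\to0$ independently of $N$. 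For the collision term I would telescope the difference of $G^K_{f^N_s}\big(x-(t-s)\Phi(\phi(m)),\phi(m)\big)$ and $G^K_{f^N_s}\big(x-(t-s)\Phi(m),m\big)$, using $G^K_{f^N_s}=\rho_{f^N_s}\,M_{K^N*f^N_s}[\cdot]$, into three pieces: (a) a position shift of $\rho_{f^N_s}$, handled by \eqref{spacetranslationstability} applied to the marginal of $f^N_s$ and bounding the companion factor $M$ by $\alpha(a)$; (b) a position shift inside $M_{K^N*f^N_s}[\cdot]$, handled via the flux-Lipschitz bound \eqref{fluxlipschitz} with the \emph{$N$-uniform} bound $|J_{K^N*f^N_s}|\le a$, then using $J_{K^N*f^N_s}=K^N*\big(\int_\MM m\,f^N_s(\cdot,m)\,\dd m\big)$ together with $\|K^N\|_{L^1}=1$ to reduce a translation of the flux to one of $f^N_s$, again controlled by \eqref{spacetranslationstability}; and (c) the change of the $\MM$-argument of $M_{K^N*f^N_s}[x-(t-s)\Phi(m)]$, handled by the locally-Lipschitz bound \eqref{locallylipschitz} with the same uniform flux bound, which after integrating the probability density $M$ over $\MM$ and the mass-one density $\rho_{f^N_s}$ over $\R^d$ contributes merely $L(a)\,\eta\,|\omega|$. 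All three contributions are $N$-uniform and vanish as $\eta\to0$, so $\sup_N$ of the $\MM$-part does too; combined with the $\R^d$-part this gives condition~(2), and Lemma \ref{rfkmanifold} yields the relative compactness.

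The main obstacle is that the natural bridge — the sequence $G^K_{f^N_s}$ — is \emph{not} equicontinuous in $x$, since $\|K^N\|_{L^\infty}=\varepsilon_N^{-d}\to\infty$ and the Lipschitz constant of $J_{K^N*f^N_s}$ blows up; one therefore cannot simply replay the time-equicontinuity argument \eqref{equicontinuity}. The two facts that rescue the computation, both uniform in $N$, are that the \emph{flux} $J_{K^N*f^N_s}$ stays bounded by $a$ (because $\int K^N=1$, $\|f^N_s\|_{L^\infty}\le a$ and $|m|\le1$), which keeps the regularity constants $\alpha(a)$, $L(a)$, $\theta(a)$ of the interaction law independent of $N$, and that convolution by $K^N$ is a contraction on $L^1$, which transfers the $N$-uniform spatial-translation stability of $f^N$ to $J_{K^N*f^N}$; the $\MM$-direction then has to be absorbed by the Lipschitz regularity of the interaction law alone.
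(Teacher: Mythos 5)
Your proposal is correct and rests on exactly the same pillars as the paper's proof: apply Lemma \ref{rfkmanifold}, get boundedness and tightness for free from \eqref{tightness}, and for condition (2) run everything through Duhamel's formula using the three $N$-uniform facts you correctly single out --- the flux bound $|J_{K^N*f^N_s}|\le a$ (hence $N$-independent constants $\alpha(a),L(a),\theta(a)$), the $L^1$-contractivity of convolution by $K^N$, and the space-translation stability \eqref{spacetranslationstability}. The one genuine difference is organisational, and it costs you something: you split $\tau_{(h,\phi)}f^N_t-f^N_t$ into an $\R^d$-part and an $\MM$-part \emph{before} invoking Duhamel, so the term $\iint_{\R^d\times\omega}|f^N_t(x+h,\phi(m))-f^N_t(x,\phi(m))|\,\dd x\,\dd m$ forces a change of variables $m\mapsto\phi(m)$ inside an $L^1$ integral of $f^N_t$ itself, which requires a Jacobian bound on $\phi$ that does not follow from $d(\phi(m),m)<\eta$; you patch this by restricting to the coordinate-shift maps actually used in the proof of Lemma \ref{rfkmanifold}, which works but quietly weakens condition (2) as stated. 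The paper instead commutes $\tau_{(h,\psi)}$ with the free transport first (terms $I_1$--$I_4$), after which $\psi(m)$ only ever appears as an argument of the interaction law $M$ (controlled in $L^\infty(\MM)$ via \eqref{assumptionlocallybounded}, \eqref{fluxlipschitz}, \eqref{locallylipschitz}) or of the fixed datum $f_0$, never as an integration variable of $f^N_s$ --- so no Jacobian is needed for the $N$-dependent terms. Your telescoping (a)--(c) of the collision term is otherwise identical to the paper's treatment of $I_3$ and $I_4$, and your diagnosis of why $G^K_{f^N_s}$ itself cannot be used as an equicontinuous bridge (the blow-up of $\|K^N\|_{\mathrm{Lip}}$) is exactly the right point.
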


\begin{proof} To prove this result, we will show that the family $(f^N_t)_N$ satisfies the conditions to apply Lemma \ref{rfkmanifold}. Firstly we see that the family is bounded in $L^1$ : for all $N\in\N$, $\|f^N_t\|_{L^1(\R^d\times\mathscr{M})}=1$. Secondly thanks to the tightness result \eqref{tightness} the family satisfies the first point of Lemma \ref{rfkmanifold}. Finally we are left with proving that the family satisfies the second point in Lemma \ref{rfkmanifold}. The conclusion is then a direct consequence of Lemma \ref{rfkmanifold}. The rest of this proof is devoted to showing that the family $(f^N_t)_N$ satisfies condition 2 of Lemma \eqref{rfkmanifold}. \\ 

Let $h\in\R^d$, $\omega\subset\subset\Omega\subset\MM$ open sets and $\psi : \omega\to\Omega$ a continuous function. Recall the Duhamel's formula: 
\[f_t^N=\e^{-t}\mathsf{T}_tf_0+\int_0^t \e^{-(t-s)}\, \mathsf{T}_{t-s}G^K_{f^N_s}\,\dd s\]
where the free-transport operator $\mathsf{T}_t$ is defined by \eqref{freetransportoperator} and the $(h,\psi)$-translation is defined by:
\[\tau_{(h,\psi)} f(x,m):=f(x+h,\psi(m)).\]
To show that the family $(f^N_t)_N$ fulfills condition 2 in Lemma \ref{rfkmanifold}, we have to bound:
\begin{align}\label{fourterms}
\nonumber\|\tau_{(h,\psi)}f^N_t-f^N_t\|_{L^1(\R^d\times\omega)}\leq\, &\e^{-t}\|\tau_{(h,\psi)}\mathsf{T}_t f_0-\mathsf{T}_t f_0\|_{L^1(\R^d\times\omega)}\\
\nonumber&+\int_0^t\e^{-(t-s)} \|\tau_{(h,\psi)}\mathsf{T}_{t-s} G^K_{f^N_s}-\mathsf{T}_{t-s} G^K_{f^N_s}\|_{L^1(\R^d\times\omega)}\,\dd s\\[0.1cm]
\nonumber\leq\,& \e^{-t}\|\tau_{(h,\psi)}f_0-f_0\|_{L^1(\R^d\times\omega)}\\[0.1cm]
\nonumber&+\e^{-t}\|\tau_{(h,\psi)}\mathsf{T}_tf_0-\mathsf{T}_t\tau_{(h,\psi)}f_0\|_{L^1(\R^d\times\omega)}\\[0.1cm]
\nonumber&+\int_0^t\e^{-(t-s)} \|\tau_{(h,\psi)}G^K_{f^N_s}-G^K_{f^N_s}\|_{L^1(\R^d\times\omega)}\,\dd s\\[0.1cm]
\nonumber&+\int_0^t\e^{-(t-s)}\|\tau_{(h,\psi)}\mathsf{T}_{t-s} G^K_{f^N_s}-\mathsf{T}_{t-s} \tau_{(h,\psi)}G^K_{f^N_s}\|_{L^1(\R^d\times\omega)}\,\dd s\\[0.1cm]
&=:I_1+I_2+I_3+I_4
\end{align}
where we have used that the free-transport operator is an $L^1$-isometry. We now bound each of the four terms in the right hand side.
\begin{enumerate}[$\bullet$]
\item (\textbf{Term} $I_1$) For smooth compactly supported $f_0$, it follows by the dominated convergence theorem that: 
\[\|\tau_{(h,\psi)}f_0-f_0\|_{L^1(\R^d\times\omega)}\to0\]
when $h\to0$ and $\psi\to id$ (for the uniform convergence topology on $\mathscr{M}$). This limit still holds for any $f_0\in L^1(\R^d\times \omega)$ by density in the $L^1$ norm of smooth compactly supported functions and the fact that the translation operator is an isometry. 
\item (\textbf{Term} $I_2$) By similar arguments, 
\begin{align*}
&\|\tau_{(h,\psi)}\mathsf{T}_tf_0-\mathsf{T}_t\tau_{(h,\psi)}f_0\|_{L^1(\R^d\times\omega)}\\
&\hspace{1cm}= \iint_{\R^d\times\omega} \left|\mathsf{T}_t f_0(x+h,\psi(m))-(\tau_{(h,\psi)}f_0)(x-t\Phi(m),m)\right|\dd x\dd m \\ 
&\hspace{1cm}= \iint_{\R^d\times\omega} \left|f_0\left(x+h-t\Phi\big(\psi(m)\big),\psi(m)\right)-f_0\left(x+h-t\Phi(m),\psi(m)\right)\right|\dd x\dd m \\ 
&\hspace{1cm}= \iint_{\R^d\times\omega} \left|f_0\left(x-t\left(\Phi\big(\psi(m)-\Phi(m)\right),\psi(m)\right)-f_0(x,\psi(m))\right|\dd x\dd m
\end{align*}
converges to 0 as $h\to0$ and $\psi\to id$ since $\Phi$ is Lipschitz. 
\item (\textbf{Term} $I_3$) We write the third term as: 
\begin{align*}
&\|\tau_{(h,\psi)}G^K_{f^N_s}-G^K_{f^N_s}\|_{L^1(\R^d\times\omega)} \\[0.2cm]
&=\iint_{\R^d\times\omega}\Big|\int_\MM f^N_s(x+h,m') M_{K^N*f^N_s}[x+h]\big(\psi(m)\big)\\
&\hspace{7cm}-f^N_s(x,m')M_{K^N*f^N_s}[x](m)\dd m'\Big|\dd x\dd m\\[0.2cm]
&\leq\int_\MM \iint_{\R^d\times\omega}|f^N_s(x+h,m')-f^N_s(x,m')|M_{K^N*f^N_s}[x+h]\big(\psi(m)\big)\dd x\dd m\dd m'\\[0.2cm]
&\hspace{1cm}+\int_\MM\iint_{\R^d\times\omega}f^N_s(x,m')\left|M_{K^N*f^N_s}[x+h]\big(\psi(m)\big)-M_{K^N*f^N_s}[x](m)\right|\dd x\dd m\dd m'\\[0.2cm]
&\leq \alpha(a)\|\tau_{(h,id)}f^N_s-f^N_s\|_{L^1(\R^d\times\MM)} \\[0.2cm]
&\hspace{0.5cm}+\int_\MM\iint_{\R^d\times\omega}f^N_s(x,m')\left|M_{K^N*f^N_s}[x+h]\big(\psi(m)\big)-M_{K^N*f^N_s}[x]\big(\psi(m)\big)\right|\dd x\dd m\dd m'\\[0.2cm]
&\hspace{1cm}+\int_\MM\iint_{\R^d\times\omega}f^N_s(x,m')\left|M_{K^N*f^N_s}[x]\big(\psi(m)\big)-M_{K^N*f^N_s}[x](m)\right|\dd x\dd m\dd m'\\[0.2cm]
&\leq \alpha(a)\|\tau_{(h,id)}f^N_s-f^N_s\|_{L^1(\R^d\times\MM)} \\[0.2cm]
&\hspace{1cm}+a\int_{\R^d}\left\|M_{K^N*f^N_s}[x+h]-M_{K^N*f^N_s}[x]\right\|_{L^\infty(\mathscr{M})}\dd x\\[0.2cm]
&\hspace{1.5cm}+L(a)\sup_{m\in\omega}d(\psi(m),m)
\end{align*}
where we have used the fact that for all $N\in\N$ and all $t\in[0,T]$, $\|f^N_t\|_{L^\infty(\R^d\times\mathscr{M})}\leq a$ where $a>0$ is fixed (see beginning of Subsection \ref{firstestimatessubsection}). Using the bound \eqref{fluxlipschitz}, Fubini's theorem and the fact that the integral of $K^N$ is equal to 1, it holds that
\begin{align*}
&\int_{\R^d}\left\|M_{K^N*f^N_s}[x+h]-M_{K^N*f^N_s}[x]\right\|_{L^\infty(\mathscr{M})}\dd x\\
&\hspace{2cm}\leq\theta(a)\int_{\R^d}\iint_{\R^d\times\MM} K^N(y)|f^N_s(x+h-y,m)-f^N_s(x-y,m)|\dd y\dd m\dd x \\
&\hspace{2.5cm}=\theta(a)\|\tau_{(h,id)}f^N_s-f^N_s\|_{L^1(\R^d\times\MM)}
\end{align*}
Finally we conclude the following estimate:
\begin{multline*}\|\tau_{(h,\psi)}G^K_{f^N_s}-G^K_{f^N_s}\|_{L^1(\R^d\times\omega)}\leq (\alpha(a)+a\theta(a))\|\tau_{(h,id)}f^N_s-f^N_s\|_{L^1(\R^d\times\MM)}\\
+L(a)\sup_{m\in\omega}d(\psi(m),m).\end{multline*}
Using the space-translation stability \eqref{spacetranslationstability} we deduce that this term goes to zero when $h\to0$ and $\psi\to id$ (for the uniform convergence topology on $\mathscr{M}$). 
\item (\textbf{Term} $I_4$) We bound the fourth term: 
\begin{align*}
&\|\tau_{(h,\psi)}\mathsf{T}_{t-s} G^K_{f^N_s}-\mathsf{T}_{t-s} \tau_{(h,\psi)}G^K_{f^N_s}\|_{L^1(\R^d\times\omega)}\\
&\hspace{1cm}=\iint_{\R^d\times\omega} \left|G^K_{f^N_s}\left(x-(t-s)\left(\Phi\big(\psi(m)\big)-\Phi(m)\right),\psi(m)\right)-G^K_{f^N_s}(x,\psi(m))\right|\dd x\dd m\\[0.1cm]
&\hspace{1cm}\leq\iint_{\R^d\times\omega}\int_\MM \Big|f^N_s\left(x-(t-s)\left(\Phi\big(\psi(m)\big)-\Phi(m)\right),m'\right)\\
&\hspace{4cm}\times M_{K^N*f^N_s}[x-(t-s)\left(\Phi\left(\psi(m)\right)-\Phi(m)\right)]\big(\psi(m)\big) \\
&\hspace{6cm}-f^N_s(x,m')M_{K^N*f^N_s}[x]\big(\psi(m)\big)\Big|\dd m'\dd x\dd m\\
&\hspace{1cm}\leq\big(\alpha(a)+a\theta(a)\big)\int_\omega\left\|\tau_{(-(t-s)\left(\Phi(\psi(m))-\Phi(m)\right),id)}f^N_s-f^N_s\right\|_{L^1(\R^d\times\MM)}\dd m
\end{align*}
Using the space-translation stability and the fact that $\Phi$ is Lipschitz, we conclude that this term goes to zero when $h\to0$ and $\psi\to id$. 
\end{enumerate}
Finally, gathering the analysis of these four terms, we conclude that the right-hand side in expression \eqref{fourterms} converges to 0 when $h\to0$ and $\psi\to id$. Therefore the family $(f^N_t)_N$ fulfills condition 2 in Lemma \ref{rfkmanifold}, as we wanted to show. 
\end{proof}

\begin{rem}
This result is similar to the compact injection $W^{1,1}(M)\hookrightarrow L^1(M)$ where $M$ is compact Riemannian manifold (this follows from Rellich-Kondrakov injection theorem \cite[Theorem 2.9]{hebey2000nonlinear} which is a consequence of Riesz-Fréchet-Kolmogorov theorem, as Lemma \ref{rfkmanifold}). We can apply directly this result under stronger assumptions on $f_0$ : if $f_0$ is compactly supported then it can be seen as a function on $\T^d\times\MM$ where $\T^d$ is a sufficiently big $d$-dimensional torus. The tightness result \eqref{tightness} proves that $f^N_t$ can also be seen as a function on $\T^d\times\MM$. The space-translation stability \eqref{spacetranslationstability} and the Lipschitz bound \eqref{locallylipschitz} on the interaction law ensure that if $f_0\in W^{1,1}(\T^d\times\MM)$ then $(f^N_t)_N$ is bounded in $W^{1,1}(\T^d\times\MM)$ hence compact in $L^1(\T^d\times\MM)$.
\end{rem}

\subsection{Proof of Theorem \ref{moderateinteractiontheorem}}\label{proofmoderateinteractionsection}

\begin{proof}[\sc Proof (of Theorem \ref{moderateinteractiontheorem})]
Under hypothesis \eqref{boundepsilonN}, it holds that
\[\frac{\sigma(K^N)}{N}\underset{N\to+\infty}{\longrightarrow}0\]
and the right-hand side of \eqref{propagationofchaos} (at time $T$) goes to $0$ as $N\to+\infty$ : 
\[\frac{\varepsilon_N^{d/2+1}\e^{T\frac{\sigma(K^N)}{N}}}{\sqrt{N}}\exp\left(T\theta(\varepsilon_N^{-d})\varepsilon_N^{-(d+1)}\e^{\frac{\sigma(K^N)}{N}}\right)\to0,\]
where we can assume without loss of generality that $\|K\|_{L^\infty}=\|K\|_{\mathrm{Lip}}=1$ which implies
\[\|K^N\|_{L^\infty}=\varepsilon_N^{-d}\,\,\,\text{and}\,\,\,\|K^N\|_{\mathrm{Lip}}=\varepsilon^{-(d+1)}.\]
The propagation of chaos result (Theorem \ref{propagationofchaostheorem}) therefore ensures that under Hypothesis \eqref{boundepsilonN},
\[W^{1}(\mu^N_t,f^N_t)\underset{N\to+\infty}{\longrightarrow}0,\]
where $f^N_t$ is the solution of \eqref{bgkkernel} with kernel $K^N$. The Wasserstein-1 convergence implies the weak convergence of measures. It then remains to prove that 
\[\|f^N_t-f_t\|_{L^1(\R^d\times\MM)}\to0,\]
since it also implies the weak convergence of $f^N_t$ towards $f_t$ as measures. The equicontinuity property \eqref{equicontinuity} and Lemma \ref{pointwisecompactness} are the two hypotheses of Ascoli's theorem \cite[Theorem 4.25]{brezis2010functional}. Therefore, we can extract a subsequence of $(f^N)_N$ which converges in $C([0,T],L^1(\R^d\times\MM))$ towards $f\in~C([0,T],L^1(\R^d\times\MM))$. Note that since the sequence is also bounded in $L^\infty(\R^d\times\MM)$, we also have $f_t\in B_a$ for every $t\in[0,T]$. It remains to prove that the limit $f_t$ is uniquely defined as the solution of \eqref{bgkdelta}. Thanks to Duhamel's formula. to do so, it is enough to prove that 
\[\|G^K_{f^N_t}-G_{f_t}\|_{L^1(\R^d\times\MM)}\to0\]
uniformly in $t$. This follows from: 
\begin{align*}
\|G^K_{f^N_t}-G_{f_t}\|_{L^1(\R^d\times\MM}&\leq\alpha(a)\|f^N_t-f_t\|_{L^1(\R^d\times\MM)}\\
&\hspace{1cm}+a\int_{\R^d}\left\|M_{K^N*f^N_t}[x]-M_{K^N*f_t}[x]\right\|_{L^\infty(\MM)}\dd x\\
&\hspace{2cm}+a\int_{\R^d}\left\|M_{K^N*f_t}[x]-M_{f_t}[x]\right\|_{L^\infty(\MM)}\dd x \\ 
&\leq\big(\alpha(a)+a\theta(a)\big)\sup_{t\in[0,T]} \|f^N_t-f_t\|_{L^1(\R^d\times\MM)}\\
&\hspace{1cm}+a\theta(a)\int_{\MM}\|K^N*f_t(\cdot,m)-f_t(\cdot,m)\|_{L^1(\R^d)}\dd m
\end{align*}
and the last term in the right hand side goes to zero by the dominated convergence theorem, uniformly in $t$ by Ascoli's theorem.
\end{proof}

\begin{rem}
If $f_0$ is compactly supported, then 
\[W^1(f^N_t,f_t)\leq C\|f^N_t-f_t\|_{L^1(\R^d\times\MM)}\]
and the convergence of $\mu^N_t$ towards $f_t$ actually holds in Wasserstein-1 distance. 
\end{rem}

\section{An alternative approach in the spatially homogeneous case}\label{alternativesection}

This section is an adaptation of the proof of the mean-field limit of the isotropic 4-wave kinetic equation with bounded jump kernel which can be found in \cite[Section 4.3]{sara}. \\ 

The aim of this section is to write the solution of the space homogeneous BGK equation:
\[\partial_t\nu_t=M_{\nu_t}-\nu_t=:Q_{BGK}(\nu_t),\]
as the mean-field limit of a system of interacting particles. Without loss of generality we will look for a solution is in $\mathcal{P}(\MM)$ (and not only $\mathcal{M}_+(\MM)$) since the total mass is preserved by the equation. In the following, we will say that a $\mathcal{P}(\MM)$-valued process $(\nu_t)_t$ is a weak solution of the space homogeneous BGK equation when it is continuous almost everywhere in time and satisfies for almost every $t\in\R_+$ : 
\[\forall \varphi\in C_b(\MM),\,\,\,\,\,\langle \nu_t,\varphi\rangle= \langle \nu_0,\varphi\rangle + \int_0^t Q_{BGK}^*\varphi(\nu_s)\,\dd s,\]
where for $\varphi\in C_b(\MM)$ and $\nu\in\mathcal{P}(\MM)$, 
\begin{equation}\label{QBGK*}Q_{BGK}^*\varphi(\nu):=\int_{\MM}\int_{\MM} \{\varphi(m')-\varphi(m)\}M_{\nu}(m')\dd m'\dd\nu(m).\end{equation}
Notice that: 
\[Q_{BGK}^*\varphi(\nu)=\langle Q_{BGK}(\nu),\varphi\rangle.\]
The proof of the well-posedness of such weak formulation can be found in \cite[Theorem 6.1]{kolokoltsov_2010}. In the following a \textit{test function} is a continuous bounded function. Notice that since $\MM$ is compact, a continuous function on $\MM$ is automatically bounded. 
\subsection{Individual Based Model}
Consider the $N$ particles $(m^{1,N}_t,\ldots,m^{N,N}_t)\in \MM^N$ dynamics defined by the following jump process:
\begin{enumerate}
\item Let $(T_n)_n$ be an increasing sequence of jump times such that the increments are independent and follow an exponential law of parameter $N$. 
\item At each jump time $T_n$, choose a particle $i\in\{1,\ldots N\}$ uniformly among the $N$ particles and draw the new body-orientation $m^{i,N}_{T_n^+}$ after the jump according to the law $M_{\hat{\mu}^N_{T_n^-}}$ where the empirical measure $\hat{\mu}^N_t$ is defined by
\[\hat{\mu}^N_t:=\frac{1}{N}\sum_{i=1}^N \delta_{m^{i,N}_t}.\]
\end{enumerate}
This defines a continuous time Markov process $(m^{1,N}_t,\ldots,m^{N,N}_t)_t\in\MM^N$ with generator:
\begin{multline*}\mathcal{A}^N \varphi(m_1,\ldots,m_N)\\
:=\sum_{i=1}^{N}\int_{\MM} \big(\varphi(m_1,\ldots,m_i',\ldots,m_N)-\varphi(m_1,\ldots,m_i,\ldots,m_N)\big)M_{\mu^N(m)}(m_i')\,\dd m_i',\end{multline*}
where
\[\mu^N(m)=\frac{1}{N}\sum_{i=1}^{N} \delta_{m_i}.\]

\subsection{A process on measures}

Equivalently, we define a $\mathcal{M}^N(\MM)\subset\mathcal{P}(\MM)$-valued Markov process with generator:
\[G^N\phi(\nu):=N\int_{\MM}\int_{\MM} \left(\phi\left(\nu-\frac{1}{N}\delta_m+\frac{1}{N}\delta_{m'}\right)-\phi(\nu)\right) M_{\nu}(m')\dd m'\dd\nu(m),\]
where $\phi : \mathcal{P}(\MM)\to\R$ is a test function and 
\[\mathcal{M}^N(\MM):=\left\{\frac{1}{N}\sum_{i=1}^N \delta_{m_i},\,\,(m_1,\ldots,m_N)\in\MM^N\right\}\subset\mathcal{P}(\MM)\]
is the set of empirical measures of size $N$.
\begin{lemma}[Links between the IBM and the measure-valued process]\label{IBMmeasureproces} Let $(m^{i,N}_0)_i\in \MM^N$ an initial state and 
\[\hat{\mu}^N_0:=\frac{1}{N}\sum_{i=1}^{N} \delta_{m^{i,N}_0}.\]
It holds that: 
\begin{enumerate}[(i)]
\item the law of any of the $m^{i,N}_t$ is equal to $\E_{\hat{\mu}^{N}_0}\left[\hat{\mu}^{N}_t\right]\in\mathcal{P}(\MM)$,
\item for all $t\in\R_+$, 
\[\E_{\hat{\mu}^{N}_0}\left[\hat{\mu}^{N}_t\right]=\E_{\hat{\mu}^{N}_0}\left[\nu^N_t\right],\]
where $\nu^N_t$ is the $\mathcal{M}^N(\MM)$-valued Markov process with generator $G^N$ and initial state $\hat{\mu}^N_0$.
\end{enumerate}
\end{lemma}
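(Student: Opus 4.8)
The plan is to prove (ii) first, since it carries the substance, and then to deduce (i) from the permutation symmetry of the IBM. For (ii), the point is that the empirical measure process $(\hat{\mu}^N_t)_t$ is \emph{itself} the $\mathcal{M}^N(\MM)$-valued Markov process generated by $G^N$; for (i), the point is that from an exchangeable initial condition all one-particle marginals coincide, hence coincide with the mean empirical measure.

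\medskip
\noindent\textbf{Part (ii).} The key step is the commutation identity
\[
\mathcal{A}^N\big[\phi\circ\mu^N\big](m_1,\ldots,m_N)=\big(G^N\phi\big)\big(\mu^N(m_1,\ldots,m_N)\big),\qquad \mu^N(m_1,\ldots,m_N):=\frac1N\sum_{i=1}^N\delta_{m_i},
\]
valid for every bounded measurable $\phi:\mathcal{P}(\MM)\to\R$. To obtain it, start from the definition of $\mathcal{A}^N$, rename the integration variable $m_i'$ of the $i$-th summand to $m'$, and use that, since $\mu^N(m)=\frac1N\sum_i\delta_{m_i}$,
\[
\frac1N\sum_{i=1}^N\Big(\phi\big(\mu^N(m)-\tfrac1N\delta_{m_i}+\tfrac1N\delta_{m'}\big)-\phi\big(\mu^N(m)\big)\Big)=\int_\MM\Big(\phi\big(\mu^N(m)-\tfrac1N\delta_{y}+\tfrac1N\delta_{m'}\big)-\phi\big(\mu^N(m)\big)\Big)\,\dd\mu^N(m)(y);
\]
multiplying by $N$ and integrating against $M_{\mu^N(m)}(m')\,\dd m'$ reproduces $G^N\phi$ evaluated at $\mu^N(m)$. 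Since $M_\nu$ is a probability density on the compact manifold $\MM$ (so $\int_\MM M_\nu(m')\,\dd m'=1$; moreover $|J_\nu|\leq1$ because $|m|\leq1$, hence $\|M_\nu\|_{L^\infty(\MM)}\leq\alpha(1)$ by Assumption \ref{asslocallybounded}), the operator $G^N$ is bounded, $\|G^N\phi\|_\infty\leq 2N\|\phi\|_\infty$, so its martingale problem on $\mathcal{M}^N(\MM)$ is well posed, with semigroup the norm-convergent series $\e^{tG^N}$. Applying Dynkin's formula to the IBM — legitimate as $\mathcal{A}^N$ is bounded — with the test function $\phi\circ\mu^N$ and using the identity gives
\[
\E_{\hat{\mu}^N_0}\big[\phi(\hat{\mu}^N_t)\big]=\phi(\hat{\mu}^N_0)+\int_0^t\E_{\hat{\mu}^N_0}\big[(G^N\phi)(\hat{\mu}^N_s)\big]\,\dd s,
\]
that is, $(\hat{\mu}^N_t)_t$ solves the martingale problem for $G^N$ with initial value $\hat{\mu}^N_0$. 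By well-posedness it has the same law as $(\nu^N_t)_t$ started at $\hat{\mu}^N_0$, and testing against the bounded continuous functional $\nu\mapsto\langle\nu,\varphi\rangle$, $\varphi\in C_b(\MM)$, yields $\E_{\hat{\mu}^N_0}[\langle\hat{\mu}^N_t,\varphi\rangle]=\E_{\hat{\mu}^N_0}[\langle\nu^N_t,\varphi\rangle]$ for all $t\geq0$, which is (ii).

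\medskip
\noindent\textbf{Part (i).} The generator $\mathcal{A}^N$ commutes with the action of the symmetric group on the coordinates of $\MM^N$. Hence, starting from an exchangeable initial condition — no loss of generality, since a uniformly random relabelling of the initial particles changes neither $\hat{\mu}^N_0$ nor the law of any symmetric functional of the trajectory, in particular that of $\hat{\mu}^N_t$ — the law of $(m^{1,N}_t,\ldots,m^{N,N}_t)$ stays exchangeable for every $t\geq0$, so $\mathrm{Law}(m^{i,N}_t)$ does not depend on $i$. Therefore, for every $\varphi\in C_b(\MM)$,
\[
\E_{\hat{\mu}^N_0}\big[\varphi(m^{i,N}_t)\big]=\frac1N\sum_{j=1}^N\E_{\hat{\mu}^N_0}\big[\varphi(m^{j,N}_t)\big]=\E_{\hat{\mu}^N_0}\big[\langle\hat{\mu}^N_t,\varphi\rangle\big]=\big\langle\E_{\hat{\mu}^N_0}[\hat{\mu}^N_t],\varphi\big\rangle,
\]
which is (i).

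\medskip
The only point needing care (rather than a genuine difficulty) is, in Part (ii), passing from the commutation identity to ``$(\hat{\mu}^N_t)_t$ is the Markov process generated by $G^N$'': it is cleanest to route this through the expectation form of Dynkin's formula above plus uniqueness for the bounded generator $G^N$ — e.g. via the elementary remark that $s\mapsto\E_{\hat{\mu}^N_0}\big[(\e^{(t-s)G^N}\phi)(\hat{\mu}^N_s)\big]$ is constant on $[0,t]$ — rather than invoking a general Markov mapping theorem.
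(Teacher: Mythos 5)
Your proof is correct, and its core mechanism for (ii) --- the commutation identity $\mathcal{A}^N[\phi\circ\mu^N]=(G^N\phi)\circ\mu^N$ --- is exactly the ``direct calculation'' underlying the paper's argument; the difference is in how you exploit it. The paper restricts to linear functionals $\phi(\nu)=\langle\nu,\varphi\rangle$ and identifies the two expectations as solutions of the same Kolmogorov equation, which is all that is needed for the stated equality of means; you instead prove the identity for general bounded $\phi$ and invoke well-posedness of the martingale problem for the bounded generator $G^N$, which yields the stronger conclusion that $(\hat{\mu}^N_t)_t$ and $(\nu^N_t)_t$ are equal \emph{in law} as processes, from which (ii) follows by specialisation. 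For (i) the paper simply cites \cite[Section~1.4]{villani}, whereas you supply the standard exchangeability argument yourself; your remark that one must first symmetrise a deterministic initial tuple by a uniform random relabelling (which changes neither $\hat{\mu}^N_0$ nor the law of $\hat{\mu}^N_t$) is a genuine point of care, since for a fixed non-symmetric initial state the one-particle laws differ from one another, and it is precisely under this exchangeable reading that the statement, and the cited reference, apply. Both routes are sound; yours buys a stronger process-level identification and a self-contained (i) at the cost of invoking (easy) martingale-problem uniqueness.
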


\begin{proof} The first point is proved for example in \cite[Section 1.4]{villani}. To prove the second point let $\varphi : \MM\to\R$ be a test function and 
\[\phi : \mathcal{P}(\MM)\to\R,\,\,\,\nu\mapsto\langle \nu,\varphi\rangle.\]
A direct calculation shows that: 
\[\lim_{t\to0} \frac{\E_{\hat{\mu}^{N}_0}\left[\phi(\widehat{\mu}^N_t)\right]-\phi(\hat{\mu}^N_0)}{t}=G^N\phi(\hat{\mu}^{N}_0).\] 
This proves that $\E_{\hat{\mu}^{N}_0}\left[\phi(\widehat{\mu}^N_t)\right]$ and $\E_{\hat{\mu}^{N}_0}\left[\phi(\nu^N_t)\right]$ satisfy the Kolmogorov equations with same initial condition. In conclusion, for all test function $\varphi$ on $\MM$:
\[\left\langle \E_{\hat{\mu}^{N}_0}\left[\hat{\mu}^N_t\right],\varphi\right\rangle =\left\langle \E_{\hat{\mu}^{N}_0}\left[\nu^N_t\right],\varphi\right\rangle.\]
\end{proof}

\begin{proposition}
Let $\phi : \mathcal{P}(\MM)\to\R$ be a test function and $\nu^N_t$ the Markov process with generator $G^N$. It holds that 
\begin{equation}\label{representationformula}M^{N,\phi}_t:=\phi(\nu^N_t)-\phi(\nu^N_0)-\int_0^t G^N\phi(\nu^N_s)\,\dd s\end{equation}
and
\begin{multline}\label{quadraticvariation}
N_t^{N,\phi}:=\left(M^{N,\phi}_t\right)^2 -N \int_0^t \iint_{\MM\times\MM}\left\{\phi\left(\nu^N_s-\frac{1}{N}\delta_m+\frac{1}{N}\delta_{m'}\right)-\phi(\nu^N_s)\right\}^2\\\times M_{\nu^N_s}(m')\dd m'\dd\nu^N_s(m).\end{multline}
are two martingales.
\end{proposition}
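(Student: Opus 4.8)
The plan is to recognise both identities as the Dynkin and ``carr\'e du champ'' martingale formulae attached to the bounded jump generator $G^N$, and to make the second one explicit by a direct computation of the carr\'e du champ. First a word on the setting: since $\MM$ is compact, so are $\mathcal{M}^N(\MM)$ and $\mathcal{P}(\MM)$ for the weak topology, hence every test function $\phi$ is bounded; moreover $M_\nu(\cdot)\dd m$ and $\dd\nu$ are probability measures, so $\|G^N\phi\|_\infty\leq 2N\|\phi\|_\infty$ and $G^N$ is a bounded operator on the space of bounded measurable functions on $\mathcal{M}^N(\MM)$. The process $(\nu^N_t)_t$ is the $\mathcal{M}^N(\MM)$-valued pure jump process with constant jump rate $N$ generated by $G^N$; it is non-explosive (the number of jumps before a fixed $T$ is $\mathrm{Poisson}(NT)$), its transition semigroup is $P_t=\e^{tG^N}$, and all the random variables occurring below are bounded on $[0,T]$, so no integrability or localisation issue arises.

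\textbf{Step 1: the first martingale.} I would first prove that $M^{N,\phi}_t$ is a martingale for the natural filtration $(\mathcal{F}_t)_t$ of the process. By the Markov property this reduces to the forward equation $\tfrac{\dd}{\dd t}P_t\phi=P_tG^N\phi$, which is immediate since $P_t=\e^{tG^N}$ for the bounded operator $G^N$; alternatively it can be verified by hand by conditioning on the first jump time $T_1\sim\mathrm{Exp}(N)$ and differentiating the resulting renewal identity for $t\mapsto\E_\nu[\phi(\nu^N_t)]$. This is precisely the computation carried out in \cite[Section 4.3]{sara} in an analogous setting, and it is also a particular case of general jump-process theory, see e.g.\ \cite{kolokoltsov_2010}.

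\textbf{Step 2: the second martingale.} Applying Step 1 with $\phi$ replaced by the bounded test function $\phi^2$ shows that $\phi(\nu^N_t)^2-\phi(\nu^N_0)^2-\int_0^t G^N(\phi^2)(\nu^N_s)\,\dd s$ is a martingale. Combining this with Step 1 applied to $\phi$ and with It\^o's formula for the jump process $t\mapsto\phi(\nu^N_t)$ (whose only continuous variation between jumps comes from the compensator $A_t=\int_0^t G^N\phi(\nu^N_s)\,\dd s$), one obtains the standard identity that $(M^{N,\phi}_t)^2-\int_0^t\Gamma^N\phi(\nu^N_s)\,\dd s$ is a martingale, where $\Gamma^N\phi:=G^N(\phi^2)-2\phi\,G^N\phi$ is the carr\'e du champ of $G^N$. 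It remains to compute $\Gamma^N\phi$. Writing $\nu^{m,m'}:=\nu-\tfrac1N\delta_m+\tfrac1N\delta_{m'}$ and inserting $G^N\phi(\nu)=N\iint_{\MM\times\MM}\big(\phi(\nu^{m,m'})-\phi(\nu)\big)M_\nu(m')\,\dd m'\,\dd\nu(m)$ together with the analogous formula for $G^N(\phi^2)$, the $\phi(\nu)^2$-terms cancel and the integrand collapses to a perfect square:
\[ \Gamma^N\phi(\nu)=N\iint_{\MM\times\MM}\big(\phi(\nu^{m,m'})-\phi(\nu)\big)^2M_\nu(m')\,\dd m'\,\dd\nu(m), \]
which is exactly the integrand appearing in $N^{N,\phi}_t$. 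Hence $N^{N,\phi}_t$ is a martingale.

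The computations are routine; the only point requiring a little care is the passage, in Step 2, from the martingale of Step 1 applied to $\phi^2$ to the predictable quadratic variation $\int_0^t\Gamma^N\phi(\nu^N_s)\,\dd s$ of $M^{N,\phi}$. This rests on $t\mapsto\phi(\nu^N_t)$ being a finite-variation jump process on $[0,T]$ (true because the jump rate is bounded and $\phi$ is bounded), after which both the carr\'e du champ identity and the algebraic simplification above are immediate.
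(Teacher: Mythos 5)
Your proof is correct and rests on exactly the same machinery the paper invokes: the paper's entire proof is a citation of \cite[Lemma 5.1]{kipnislandim}, which is precisely the Dynkin-formula-plus-carr\'e-du-champ statement you derive, and your computation $\Gamma^N\phi=G^N(\phi^2)-2\phi\,G^N\phi=N\iint_{\MM\times\MM}\big(\phi(\nu^{m,m'})-\phi(\nu)\big)^2M_\nu(m')\,\dd m'\,\dd\nu(m)$ correctly recovers the integrand in \eqref{quadraticvariation}. The only difference is that you unpack the cited lemma for this particular bounded pure-jump generator, which is legitimate and harmless since $G^N$ is bounded, $\phi^2$ is again an admissible test function, and no domain or localisation issues arise.
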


\begin{proof}
These two results are a direct consequence of \cite[Lemma 5.1]{kipnislandim}.
\end{proof}

\subsection{Mean-field limit}

\begin{theorem}\label{meanfieldGN}
Let $\nu_0^N\in \mathcal{M}^N(\MM)$ be an initial state for the process $(\nu^N_t)_t$ with generator $G^N$.
Assume that, as $N\to+\infty$, 
\[\forall\varphi\in C_b(\MM),\,\,\,\langle\nu^N_0,\varphi\rangle \longrightarrow \langle\nu_0,\varphi\rangle.\]
Then, as $N\to+\infty$, the sequence $(\nu^N_t)_t$ converges weakly in $\mathcal{D}\big([0,\infty),\mathcal{P}(\MM)\big)$ towards $(\nu_t)_t$ the deterministic weak solution of the space homogeneous BGK equation with initial condition $\nu_0$, where $\mathcal{D}\big([0,\infty),\mathcal{P}(\MM)\big)$ is the Skorohod space of càdlàg functions from $[0,\infty)$ to $\mathcal{P}(\MM)$.
\end{theorem}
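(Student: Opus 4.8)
The plan is to run the classical martingale/compactness scheme for mean-field limits of measure-valued Markov processes: a uniform estimate on the martingale part coming from the representation formula, tightness of $(\nu^N_\cdot)_N$ in the Skorohod space, and identification of every subsequential limit as the unique deterministic weak solution. First I would apply \eqref{representationformula} and \eqref{quadraticvariation} to the linear functionals $\phi_\varphi(\nu):=\langle\nu,\varphi\rangle$ for $\varphi\in C(\MM)$. A direct computation gives $G^N\phi_\varphi(\nu)=Q_{BGK}^*\varphi(\nu)$ (independently of $N$), so that
\[M^{N,\varphi}_t:=\langle\nu^N_t,\varphi\rangle-\langle\nu^N_0,\varphi\rangle-\int_0^t Q_{BGK}^*\varphi(\nu^N_s)\,\dd s\]
is a martingale. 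Since $\phi_\varphi\bigl(\nu-\tfrac1N\delta_m+\tfrac1N\delta_{m'}\bigr)-\phi_\varphi(\nu)=\tfrac1N\bigl(\varphi(m')-\varphi(m)\bigr)$, the bracket in \eqref{quadraticvariation} is bounded by $4\|\varphi\|_\infty^2 t/N$, whence $\E\bigl[(M^{N,\varphi}_t)^2\bigr]\leq 4\|\varphi\|_\infty^2 t/N\to0$.

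For tightness I would use that $\mathcal{P}(\MM)$ is weakly compact (since $\MM$ is compact), so that by a standard criterion it suffices to prove tightness in $\mathcal{D}([0,\infty),\R)$ of $(\langle\nu^N_\cdot,\varphi\rangle)_N$ for $\varphi$ ranging over a countable dense subset of $C(\MM)$. This follows from the Aldous--Rebolledo criterion applied to the decomposition above: the finite-variation part $\int_0^\cdot Q_{BGK}^*\varphi(\nu^N_s)\,\dd s$ has derivative bounded by $\sup_{\nu}|Q_{BGK}^*\varphi(\nu)|\leq 2\|\varphi\|_\infty$, because $M_\nu$ is a probability density on $\MM$, and the bracket of the martingale part over any window of length $\delta$ is at most $4\|\varphi\|_\infty^2\delta/N$; both bounds are uniform in $N$, and tightness of $(\langle\nu^N_t,\varphi\rangle)_N$ at fixed $t$ is automatic.

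It then remains to identify the limit. Extract a weakly convergent subsequence $\nu^{N_k}\Rightarrow\nu$ in the Skorohod space. Since the jumps of $\nu^N$ have size $\OO(1/N)$ in total variation, the limit $\nu$ has a.s.\ continuous paths. The map $\nu\mapsto M_\nu$ is weakly continuous (the flux $\nu\mapsto J_\nu=\int_\MM m\,\dd\nu(m)$ is weakly continuous by compactness of $\MM$, and $J\mapsto M_J$ is continuous by Assumption \ref{assumptionfluxlipschitz}), hence $\nu\mapsto Q_{BGK}^*\varphi(\nu)$ is bounded and continuous on $\mathcal{P}(\MM)$. Passing to the limit in $M^{N_k,\varphi}_t$, using $\langle\nu^{N_k}_0,\varphi\rangle\to\langle\nu_0,\varphi\rangle$, the continuity of the limit paths, and $\E[(M^{N_k,\varphi}_t)^2]\to0$, shows that $\nu$ satisfies $\langle\nu_t,\varphi\rangle=\langle\nu_0,\varphi\rangle+\int_0^t Q_{BGK}^*\varphi(\nu_s)\,\dd s$ for all $\varphi$ in the dense set and a.e.\ $t$, i.e.\ $\nu$ is a deterministic weak solution of the space-homogeneous BGK equation with initial datum $\nu_0$. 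The uniqueness statement \cite[Theorem 6.1]{kolokoltsov_2010} then forces all subsequential limits to coincide, giving convergence of the whole sequence.

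The main obstacle is the limit-identification step: one must verify carefully that the limiting paths are continuous and that the nonlinear functional $Q_{BGK}^*\varphi$ is continuous on $\mathcal{P}(\MM)$ for the weak topology, so that the continuous-mapping and dominated-convergence arguments apply at almost every time, and one must ensure that the Aldous--Rebolledo estimates are genuinely uniform in $N$. Compactness of $\MM$ and the fact that $M_\nu$ has total mass one are precisely what make these points go through.
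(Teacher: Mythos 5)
Your proposal is correct and follows essentially the same route as the paper: uniform vanishing of the martingale part via the quadratic-variation formula, tightness reduced to the real-valued processes $\langle\nu^N_\cdot,\varphi\rangle$ through an Aldous-type criterion, continuity of the limiting paths from the $\OO(1/N)$ jump size, and identification of the limit by passing to the limit in the representation formula using the flux-Lipschitz assumption (the paper phrases this as uniform-in-time convergence of $J_{\nu^N_s}$ via the coordinate functions, which is the same continuity of $\nu\mapsto Q_{BGK}^*\varphi(\nu)$ that you invoke), followed by the uniqueness result of Kolokoltsov to upgrade subsequential to full convergence. No gaps.
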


\begin{proof} The proof is split in several steps. \\ 

\noindent\textbf{Step 1.} \textit{The sequence of the laws of the processes $(\nu^N_t)_t$ is tight in $\mathcal{P}\big(\mathcal{D}([0,\infty),\mathcal{P}(\MM))\big)$.} \\ 

Thanks to Jakubowski's criterion \cite[Theorem 4.6]{jakubowski}, it is enough to prove that for all test functions $\varphi:\MM\to\R$, the sequence of the laws of the processes $\big(\langle \nu^N_t,\varphi\rangle\big)_t$ is tight in $\mathcal{P}\big(\mathcal{D}([0,\infty),\R)\big)$ (see \cite[Lemma 4.23]{sara}). This last point is a consequence of the following martingale estimate, for $s\leq t$: 
\[\E\Big[\sup_{s\leq r\leq t}|M^{N,\phi}_r-M^{N,\phi}_s|^2\Big]\leq \frac{4\|\varphi\|^2_\infty}{N}(t-s),\]
where
\[\phi : \mathcal{P}(\MM)\to\R,\,\,\,\nu\mapsto \langle \nu,\varphi\rangle.\]
The tightness of the laws of the processes $\big(\langle \nu^N_t,\varphi\rangle\big)_t$ then follows from the martingale estimates and Aldous criterion \cite[Theorem VI.4.5]{jacodshiryaev}.\\

Finally, by Prokhorov theorem, there exists $(\nu_t)_t\in\mathcal{D}([0,\infty),\mathcal{P}(\MM))$ and we can extract a subsequence, still denoted by $(\nu^N_t)_t$, such that
\[(\nu^N_t)_t\,\Longrightarrow (\nu_t)_t\,\,\,\,\,\,\,\text{as}\,\,\,N\to\infty.\]

\noindent\textbf{Step 2.} \textit{The weak limit $(\nu_t)_t$ is continuous in time a.e. and the convergence is uniform in time.}\\

For any $\varphi \in C_b(\MM)$, we have for almost every $t\in\R_+$:
\[|\langle \nu^N_t,\varphi\rangle - \langle \nu^N_{t^-},\varphi\rangle|\leq \frac{2\|\varphi\|_{L^\infty}}{N},\]
since there is almost surely at most one jump at a given time $t\in\R_+$. 
This proves that $(\nu_t)_t$ is continuous in time a.e.. As a consequence, we obtain by the continuity mapping theorem in the Skorohod space (see \cite[Lemma 4.26]{sara}), that for almost every $t\in\R_+$:
\[\sup_{s\leq t}| \langle\nu^N_s-\nu_s,\varphi\rangle|\to0.\]

\noindent\textbf{Step 3.} \textit{Passing to the limit.}\\

Let $\varphi\in C_b(\MM)$ and $\phi : \mathcal{P}(\MM)\to\R$ defined by:
\[\phi(\nu):=\langle \nu,\varphi\rangle.\]
We want to pass to the limit in the representation formula \eqref{representationformula}.
\begin{enumerate}[$\bullet$]
\item We have made the assumption that 
\begin{equation}\label{convnu0}\langle \nu^N_0,\varphi\rangle \to \langle \nu_0,\varphi\rangle.\end{equation}
\item The martingale estimate \eqref{quadraticvariation} and Doob's inequality ensure that 
\begin{equation}\label{convm}\E\left[\sup_{s\leq t} |M^{N,\phi}_s|^2\right]\leq 4 \E[(M^{N,\phi}_t)^2]\leq \frac{16\|\varphi\|^2_\infty t}{N}\to0.\end{equation}
\item We have for all $s\leq t$: 
\[\iint_{\MM\times\MM} \{\varphi(m')-\varphi(m)\}M_{\nu^N_s}(m')\dd m'\dd\nu^N_s(m)=\int_{\MM} \varphi(m')M_{\nu^N_s}(m')dm' - \langle \nu^N_s,\varphi\rangle.\]
For the second term in the right-hand side we have proved in step 2 that:
\[\sup_{s\leq t}|\langle \nu^N_s-\nu_s,\varphi\rangle|\to0.\]
For the first term in the right-hand side, we use the compactness of $\MM$ and the flux-Lipschitz assumption \eqref{fluxlipschitz} to bound: 
\[\left|\int_{\MM} \varphi(m')M_{\nu^N_s}(m')\dd m'-\int_{\MM} \varphi(m')M_{\nu_s}(m')\dd m'\right|\leq C\|\varphi\|_{L^\infty} |J_{\nu^N_s}-J_{\nu_s}|,\]
where $C>0$ is a constant. Using the uniform convergence proved in step 2 with the coordinates functions in the Euclidean space $E$ (which are continuous and bounded), we deduce that: 
\[\sup_{s\leq t} |J_{\nu^N_s}-J_{\nu_s}| \to 0.\]
Finally, it holds that:
\begin{equation}\label{convG}\int_0^t G^N\phi\left(\nu^N_s\right)\,\dd s\,\longrightarrow \int_0^t Q_{BGK}^*\varphi(\nu_s)\,\dd s.\end{equation}
\end{enumerate}

\noindent\textbf{Conclusion.} Reporting \eqref{convnu0}, \eqref{convm} and \eqref{convG}  in the representation formula \eqref{representationformula}, we obtain that the weak limit $(\nu_t)_t$ is deterministic and satisfies weakly the space homogeneous BGK equation with initial condition $\nu_0\in\mathcal{P}(\MM)$:
\[\forall \varphi\in C_b(\MM),\,\,\,\,\,\langle \nu_t,\varphi\rangle= \langle \nu_0,\varphi\rangle + \int_0^t Q_{BGK}^*\varphi(\nu_s)\,ds,\]
where the operator $Q_{BGK}^*$ is defined by \eqref{QBGK*}. Since the limit is unique, the whole sequence $(\nu^N_t)_t$ actually converges to $(\nu_t)_t$.
\end{proof}

\begin{corollary}[Convergence of the law of the IBM] Let $(\nu_t)_t$ the weak solution of the space homogeneous equation with initial condition $\nu_0\in\mathcal{P}(\MM)$. Assume that, as $N\to+\infty$,
\[\hat{\mu}^N_0\,\Longrightarrow\,\nu_0.\]
Then, for almost every $t\in\R_+$, it holds that, as $N\to+\infty$,
\[\E_{\hat{\mu}^N_0}\left[\hat{\mu}^N_t\right]\,\Longrightarrow\,\nu_t.\]
\end{corollary}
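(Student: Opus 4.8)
The plan is to deduce the corollary from Lemma~\ref{IBMmeasureproces} and Theorem~\ref{meanfieldGN}, the two ingredients doing essentially all of the work. First I would apply point (ii) of Lemma~\ref{IBMmeasureproces}, which for every $t\in\R_+$ gives the exact identity
\[\E_{\hat{\mu}^N_0}\big[\hat{\mu}^N_t\big]=\E_{\hat{\mu}^N_0}\big[\nu^N_t\big],\]
where $(\nu^N_t)_t$ is the $\mathcal{M}^N(\MM)$-valued Markov process with generator $G^N$ started from $\nu^N_0=\hat{\mu}^N_0$. Since $\MM$ is compact, $\E_{\hat{\mu}^N_0}[\nu^N_t]$ is a well-defined Radon probability measure on $\MM$, characterised by $\langle\E_{\hat{\mu}^N_0}[\nu^N_t],\varphi\rangle=\E_{\hat{\mu}^N_0}[\langle\nu^N_t,\varphi\rangle]$ for $\varphi\in C_b(\MM)$. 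Hence the corollary reduces to showing that $\E_{\hat{\mu}^N_0}[\langle\nu^N_t,\varphi\rangle]\to\langle\nu_t,\varphi\rangle$ for every $\varphi\in C_b(\MM)$ and almost every $t$, i.e.\ to controlling the expectation of a bounded scalar observable of the measure-valued process.

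Next I would invoke Theorem~\ref{meanfieldGN}. Its hypothesis, namely $\langle\nu^N_0,\varphi\rangle\to\langle\nu_0,\varphi\rangle$ for all $\varphi\in C_b(\MM)$, is precisely the standing assumption $\hat{\mu}^N_0\Longrightarrow\nu_0$ of the corollary (the processes are started at $\nu^N_0=\hat{\mu}^N_0$). The theorem then yields that $(\nu^N_t)_t$ converges weakly in $\mathcal{D}\big([0,\infty),\mathcal{P}(\MM)\big)$ to the deterministic weak solution $(\nu_t)_t$ with initial datum $\nu_0$, and in particular (Step~2 of its proof) that $(\nu_t)_t$ is continuous in time almost everywhere and that, for every $\varphi\in C_b(\MM)$ and almost every $t\in\R_+$,
\[\sup_{s\leq t}\big|\langle\nu^N_s-\nu_s,\varphi\rangle\big|\underset{N\to+\infty}{\longrightarrow}0\]
in probability, the limit being deterministic. (This last point is the place where the restriction to almost every $t$ enters: only at continuity times of the limiting trajectory is the time-$t$ evaluation map on Skorohod space almost surely continuous at the limit, so that the continuous mapping theorem applies.)

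Finally I would pass from convergence in probability to convergence of expectations. Fix such a $t$ and $\varphi\in C_b(\MM)$; since $|\langle\nu^N_s,\varphi\rangle|\leq\|\varphi\|_{L^\infty}$ for all $N$ and $s$, the family $\big(\sup_{s\leq t}|\langle\nu^N_s-\nu_s,\varphi\rangle|\big)_N$ is uniformly bounded, hence uniformly integrable, so the convergence above also holds in $L^1$. Consequently
\[\big|\big\langle\E_{\hat{\mu}^N_0}[\nu^N_t],\varphi\big\rangle-\langle\nu_t,\varphi\rangle\big|=\big|\E_{\hat{\mu}^N_0}[\langle\nu^N_t-\nu_t,\varphi\rangle]\big|\leq\E_{\hat{\mu}^N_0}\Big[\sup_{s\leq t}\big|\langle\nu^N_s-\nu_s,\varphi\rangle\big|\Big]\underset{N\to+\infty}{\longrightarrow}0.\]
Combining this with the identity $\E_{\hat{\mu}^N_0}[\hat{\mu}^N_t]=\E_{\hat{\mu}^N_0}[\nu^N_t]$ gives $\langle\E_{\hat{\mu}^N_0}[\hat{\mu}^N_t],\varphi\rangle\to\langle\nu_t,\varphi\rangle$ for all $\varphi\in C_b(\MM)$, that is $\E_{\hat{\mu}^N_0}[\hat{\mu}^N_t]\Longrightarrow\nu_t$, for almost every $t\in\R_+$. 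I do not expect a genuine obstacle here: the analytic and probabilistic substance is entirely contained in Theorem~\ref{meanfieldGN} and Lemma~\ref{IBMmeasureproces}, and the only mildly delicate points are the restriction to continuity times of the deterministic limit and the upgrade from convergence in probability to convergence of expectations via the uniform bound $\|\varphi\|_{L^\infty}$.
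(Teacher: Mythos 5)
Your proposal is correct and follows essentially the same route as the paper: identify $\E_{\hat{\mu}^N_0}[\hat{\mu}^N_t]$ with $\E_{\hat{\mu}^N_0}[\nu^N_t]$ via Lemma \ref{IBMmeasureproces} and then conclude from the weak convergence in Theorem \ref{meanfieldGN}. The only difference is that the paper delegates the final step (evaluation at continuity times of the deterministic limit and passage to expectations) to a citation of Ethier--Kurtz, whereas you spell it out explicitly with the continuous mapping theorem and bounded convergence, which is a valid unpacking of the same argument.
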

\begin{proof} Thanks to lemma \ref{IBMmeasureproces}, we know that the law of any particle of the IBM at time $t\in\R_+$ is $\E_{\hat{\mu}^N_0}\left[\hat{\mu}^N_t\right]$ and that this law is equal (in $\mathcal{P}(\MM)$) to $\E_{\hat{\mu}^N_0}\left[\nu^N_t\right]$. The conclusion is therefore a consequence of Theorem \ref{meanfieldGN} and \cite[Theorem III.7.8]{kurtz}.\end{proof}

\section{Conclusion}

We have proved a propagation of chaos property for a Piecewise Deterministic system of agents in a geometrically enriched context. The proof of this property was based on a coupling argument similar to the classical argument \cite{sznitman89} for McKean-Vlasov systems. We have also proved that the under a moderate interaction assumption, as in \cite{jourdainmeleard}, the interaction between the agents can be made purely local, in the sense that the size of the neighbourhood decreases with the number of agents. The resulting kinetic equation is a BGK equation which has been studied in \cite{toto}. Finally, an alternative approach based on martingale arguments can be carried out in the spatially homogenous case. \\ 

This last approach could provide a better bound for the moderate interaction assumption, as in \cite{Oelschl_ger_1985} but it cannot be easily carried out in the spatially inhomogeneous case for the type of interactions studied in the present work. Moreover, our study relies heavily on the compactness of the manifold of orientation and it is not clear how this assumption could be removed. Finally, as in more classical settings, the propagation of chaos property presented here is not uniform in time, see for example \cite{durmus2018elementary,mischler2013kac} for a recent approach to solve this issue. 

\appendix

\section{Proof of lemma \ref{rfkmanifold}}\label{proofRFK}
We are going to prove that there exists a Cauchy sequence in $\mathcal{F}$. \\

By compactness, $\MM$ can be covered by finitely many open sets $\Omega_\alpha$ and we can take a finite atlas $(\Omega_\alpha,\varphi_\alpha,\eta_\alpha)_\alpha$ where $\varphi_\alpha:\Omega_\alpha \to \varphi_\alpha(\Omega_\alpha)\subset\R^p$ and $\eta_\alpha$ is an adapted partition of the unity. For all $\alpha$, on the compact set $\Supp \eta_\alpha\subset\Omega_\alpha$, the metric tensor is bounded in the system of coordinates corresponding to the chart $(\Omega_\alpha,\varphi_\alpha)$ which implies that there exists a constant $C>1$ such that for all $\alpha$ and all $x\in\Supp \eta_\alpha$,
 \begin{equation}\label{Cgij}\frac{1}{C} \leq \sqrt{|g|_{x}}\leq C,\end{equation}
where $|g|$ denotes the determinant of the matrix, the elements of which are the components of $g$ in the chart $(\Omega_\alpha,\varphi_\alpha)$.\\

 For a given $\alpha$, we are going to prove that the set 
 \[\mathcal{F}_\alpha:=\{(x,v)\mapsto \eta_\alpha(\varphi_\alpha^{-1}(v))f(x,\varphi_\alpha^{-1}(v)),\,\,\,f\in\mathcal{F}\}\]
 is relatively compact in $L^1(\R^d\times\varphi_\alpha(\Omega_\alpha))$. It is a consequence of Riesz-Fréchet-Kolmogorov theorem \cite[Corollary 4.27]{brezis2010functional}. Let $\varepsilon>0$ and $R>0$, $\eta>0$ as in the hypothesis.
 \begin{enumerate}[$\bullet$]
 \item The set $\mathcal{F}_\alpha$ is bounded: for all $f\in\mathcal{F}$, it holds that 
 \begin{align*}
 \int_{\R^d\times\varphi_\alpha(\Omega_\alpha)} \eta(\varphi_\alpha^{-1}(v))f(x,\varphi_\alpha^{-1}(v))\dd x\dd v &\leq  C\int_{\R^d\times\varphi_\alpha(\Omega_\alpha)} (\eta_\alpha f(x,\cdot)\sqrt{|g|})\circ\varphi_\alpha^{-1}\dd v\dd x \\
 &\leq C\iint_{\R^d\times\MM} f(x,m)\dd x \dd m.
 \end{align*}
 \item The tightness is given by a similar argument and the first hypothesis. 
 \item Let $\omega\subset\subset\varphi_\alpha(\Omega_\alpha)$ and let $\delta$ be the distance between $\omega$ and the boundary of $\varphi_\alpha(\Omega_\alpha)$. Let $(h,u)\in\R^d\times\R^p$ such that $|h|<\eta$ and $|u|<\min(\eta',\eta'')$ where $\eta'$ and $\eta''$ will be defined later. It holds that: 
 \begin{multline*}
 \iint_{\R^d\times\omega} |\eta_\alpha(\varphi_\alpha^{-1}(v+u))f(x+h,\varphi_\alpha^{-1}(v+u)) -\eta_\alpha(\varphi_\alpha^{-1}(v))f(x,\varphi_\alpha^{-1}(v)|\dd x\dd v  \\
 \leq \iint_{\R^d\times\omega} |\eta_\alpha(\varphi_\alpha^{-1}(v))f(x+h,\varphi_\alpha^{-1}(v+u)) -\eta_\alpha(\varphi_\alpha^{-1}(v))f(x,\varphi_\alpha^{-1}(v)|\dd x\dd v \\ 
 + \iint_{\R^d\times\omega} |\eta_\alpha(\varphi_\alpha^{-1}(v+u))-\eta_\alpha(\varphi_\alpha^{-1}(v))||f|(x+h,\varphi_\alpha^{-1}(v+u))\dd x\dd v.
 \end{multline*}
 For the second term on the right-hand side:
 \[\iint_{\R^d\times\omega} |\eta_\alpha(\varphi_\alpha^{-1}(v+u))-\eta_\alpha(\varphi_\alpha^{-1}(v))||f|(x+h,\varphi_\alpha^{-1}(v+u))\dd x\dd v\leq \varepsilon\]
 for $|u|<\eta'$ where $\eta'$ is smaller than $\delta/2$ and than the $\varepsilon$-modulus of uniform continuity of $\eta_\alpha\circ\varphi_\alpha^{-1}$ on the compact set constructed by enlarging $\omega$ by $\delta/2$. For the first term on the right hand side: 
 \begin{align*}
\hspace{-0.5cm} &\iint_{\R^d\times\omega} |\eta_\alpha(\varphi_\alpha^{-1}(v))f(x+h,\varphi_\alpha^{-1}(v+u)) -\eta_\alpha(\varphi_\alpha^{-1}(v))f(x,\varphi_\alpha^{-1}(v)|\dd x\dd v \\
 &\leq C\iint_{\R^d\times\omega} |\eta_\alpha(\varphi_\alpha^{-1}(v))f(x+h,\varphi_\alpha^{-1}(v+u)) -\eta_\alpha(\varphi_\alpha^{-1}(v))f(x,\varphi_\alpha^{-1}(v)|\\
 &\hspace{9cm}\times\sqrt{|g|}\circ\varphi_\alpha^{-1}(v)\dd x\dd v \\
 &\leq \iint_{\R^d\times\varphi_\alpha^{-1}(\omega)} |f(x+h,\phi(m))-f(x,m)|\dd x\dd m
 \end{align*}
 where for $m\in\varphi_\alpha^{-1}(\omega)$, 
 \[\phi(m):=\varphi_{\alpha}^{-1}(\varphi_\alpha(m)+u)\in\Omega_\alpha\]
 satisfies
 \[d(\phi(m),m)\leq\eta\]
 for $|u|\leq\eta''$ where $\eta''$ is smaller than $\delta/2$ and than the $\eta$-modulus of uniform continuity of $\varphi_\alpha^{-1}$ on the compact set constructed by enlarging $\omega$ by $\delta/2$. Finally, for $|h|<\eta$ and $|u|<\min(\eta',\eta'')$, 
 \[ \iint_{\R^d\times\varphi_\alpha(\Omega_\alpha)} |\eta_\alpha(\varphi_\alpha^{-1}(v+u))f(x+h,\varphi_\alpha^{-1}(v+u)) -\eta_\alpha(\varphi_\alpha^{-1}(v))f(x,\varphi_\alpha^{-1}(v)|\dd x\dd v \leq 2\varepsilon.\]
 \end{enumerate}
Therefore we can apply Riesz-Fréchet-Kolmogorov theorem which shows that $\mathcal{F}_\alpha$ is relatively compact in $L^1(\R^d\times\varphi_\alpha(\Omega_\alpha))$. As a consequence, there exists a Cauchy sequence in $\mathcal{F}_\alpha$.\\

Now consider a sequence $(f_n)_n$ in $\mathcal{F}$. We can extract a subsequence still denoted by $(f_n)_n$ such that for all $\alpha$, the sequence $(\eta_\alpha f_n\circ\varphi_\alpha^{-1})_n$ is a Cauchy sequence in $L^1(\R^d\times \varphi_\alpha(\Omega_\alpha))$. Inequality \eqref{Cgij} implies that for any $\alpha$, the sequence $(\eta_\alpha f_n)_n$ is a Cauchy sequence in the space $L^1(\R^d\times\Omega_\alpha)$. Then, for any $n,m\in\N$, since: 
 \[\|f_n-f_m\|_{L^1(\R^d\times\MM)}\leq\sum_\alpha\|\eta_\alpha f_n-\eta_\alpha f_m\|_{L^1(\R^d\times \Omega_{\alpha})},\]
 we conclude that $(f_n)_n$ is a Cauchy sequence in $L^1(\R^d\times\MM)$ and thus converges since $L^1(\R^d\times\MM)$ is complete. 

\section*{Acknowledgments}
\addcontentsline{toc}{section}{Acknowledgments} The author wishes to thank Sara Merino-Aceituno, Pierre Degond and Amic Frouvelle for drawing his attention to this problem, for their support and useful advice. The author is greatful for the hospitatility of the Faculty of Mathematics at the University of Vienna, where part of this research was conducted and also to the grant Vienna Research Groups for Young Investigators (WWTF) grant number VRG17-014 for funding this visit.

\bibliographystyle{abbrv}
\bibliography{biblio}

\begin{thebibliography}{10}

\bibitem{aubin}
T.~Aubin.
\newblock {\em Nonlinear Analysis on Manifolds. Monge-Ampere Equations}, volume
  252 of {\em Grundlehren der mathematischen Wissenschaften}.
\newblock Springer-Verlag, 1982.

\bibitem{azais2014piecewise}
R.~Aza{\"\i}s, J.-B. Bardet, A.~G{\'e}nadot, N.~Krell, and P.-A. Zitt.
\newblock Piecewise deterministic {M}arkov process---recent results.
\newblock In {\em ESAIM: Proc.}, volume~44, pages 276--290. EDP Sci., 2014.

\bibitem{bhatnagar1954model}
P.~L. Bhatnagar, E.~P. Gross, and M.~Krook.
\newblock A model for collision processes in gases. {I}. {S}mall amplitude
  processes in charged and neutral one-component systems.
\newblock {\em Phys. Rev.}, 94(3):511, 1954.

\bibitem{bolley2011stochastic}
F.~Bolley, J.~A. Ca\~nizo, and J.~A. Carrillo.
\newblock Stochastic mean-field limit: non-{L}ipschitz forces and swarming.
\newblock {\em Math. Models Methods Appl. Sci.}, 21(11):2179--2210, 2011.

\bibitem{bolley2012mean}
F.~Bolley, J.~A. Ca{\~n}izo, and J.~A. Carrillo.
\newblock Mean-field limit for the stochastic {V}icsek model.
\newblock {\em Appl. Math. Lett.}, 25(3):339--343, 2012.

\bibitem{braun1977vlasov}
W.~Braun and K.~Hepp.
\newblock The {V}lasov dynamics and its fluctuations in the {1/N} limit of
  interacting classical particles.
\newblock {\em Commun. Math. Phys.}, 56(2):101--113, 1977.

\bibitem{bremaud}
P.~Br{\'e}maud.
\newblock {\em Markov chains: Gibbs fields, Monte Carlo simulation and Queues},
  volume~31 of {\em Texts in Applied Mathematics}.
\newblock Springer-Verlag New York, 1999.

\bibitem{brezis2010functional}
H.~Brezis.
\newblock {\em Functional analysis, Sobolev spaces and partial differential
  equations}.
\newblock Springer-Verlag New York, 2011.

\bibitem{canizo2018asymptotic}
J.~A. Ca{\~n}izo and H.~Yolda{\c{s}}.
\newblock Asymptotic behaviour of neuron population models structured by
  elapsed-time.
\newblock {\em arXiv preprint arXiv:1803.07062}, 2018.

\bibitem{carrillo2014derivation}
J.~A. Carrillo, Y.-P. Choi, and M.~Hauray.
\newblock The derivation of swarming models: mean-field limit and {W}asserstein
  distances.
\newblock In A.~Muntean and F.~Toschi, editors, {\em Collective dynamics from
  bacteria to crowds}, pages 1--46. Springer, 2014.

\bibitem{cercignani2013mathematical}
C.~Cercignani, R.~Illner, and M.~Pulvirenti.
\newblock {\em The mathematical theory of dilute gases}, volume 106 of {\em
  Applied Mathematical Sciences}.
\newblock Springer Science \& Business Media, 1994.

\bibitem{davis84}
M.~H. Davis.
\newblock Piecewise-deterministic {M}arkov processes: {A} general class of
  non-diffusion stochastic models.
\newblock {\em J. Roy. Statist. Soc. Ser. B (Methodological)}, pages 353--388,
  1984.

\bibitem{degond2004macroscopic}
P.~Degond.
\newblock Macroscopic limits of the {B}oltzmann equation: a review.
\newblock In {\em Modeling and Computational Methods for Kinetic Equations},
  Modeling and Simulation in Science, Engineering and Technology, pages 3--57.
  Birkh{\"a}user Basel, 2004.

\bibitem{toto}
P.~Degond, A.~Diez, A.~Frouvelle, and S.~Merino-Aceituno.
\newblock Phase transitions and macroscopic limits in a {BGK} model of
  body-attitude coordination.
\newblock {\em Submitted}, 2019.

\bibitem{degondfrouvelleliu15}
P.~Degond, A.~Frouvelle, and J.-G. Liu.
\newblock Phase transitions, hysteresis, and hyperbolicity for self-organized
  alignment dynamics.
\newblock {\em Arch. Ration. Mech. Anal.}, 216(1):63--115, 2015.

\bibitem{degondfrouvellemerino17}
P.~Degond, A.~Frouvelle, and S.~Merino-Aceituno.
\newblock A new flocking model through body attitude coordination.
\newblock {\em Math. Models Methods Appl. Sci.}, 27(06):1005--1049, 2017.

\bibitem{degond2018alignment}
P.~Degond, A.~Frouvelle, S.~Merino-Aceituno, and A.~Trescases.
\newblock Alignment of self-propelled rigid bodies: from particle systems to
  macroscopic equations.
\newblock {\em arXiv preprint arXiv:1810.06903}, 2018.

\bibitem{degondfrouvellemerinotrescases18}
P.~Degond, A.~Frouvelle, S.~Merino-Aceituno, and A.~Trescases.
\newblock Quaternions in collective dynamics.
\newblock {\em Multiscale Model. Simul.}, 16(1):28--77, 2018.

\bibitem{degond2017continuum}
P.~Degond, J.-G. Liu, S.~Merino-Aceituno, and T.~Tardiveau.
\newblock Continuum dynamics of the intention field under weakly cohesive
  social interaction.
\newblock {\em Math. Models Methods Appl. Sci.}, 27(01):159--182, 2017.

\bibitem{degond2008continuum}
P.~Degond and S.~Motsch.
\newblock Continuum limit of self-driven particles with orientation
  interaction.
\newblock {\em Math. Models Methods Appl. Sci.}, 18(supp01):1193--1215, 2008.

\bibitem{degond2015multi}
P.~Degond and L.~Navoret.
\newblock A multi-layer model for self-propelled disks interacting through
  alignment and volume exclusion.
\newblock {\em Math. Models Methods Appl. Sci.}, 25(13):2439--2475, 2015.

\bibitem{dimarcomotsch16}
G.~Dimarco and S.~Motsch.
\newblock Self-alignment driven by jump processes : {M}acroscopic limit and
  numerical investigation.
\newblock {\em Math. Models Methods Appl. Sci.}, 26(07):1385--1410, 2016.

\bibitem{dobrushin1979vlasov}
R.~L. Dobrushin.
\newblock Vlasov equations.
\newblock {\em Funct. Anal. Appl.}, 13(2):115--123, 1979.

\bibitem{durmus2018elementary}
A.~Durmus, A.~Eberle, A.~Guillin, and R.~Zimmer.
\newblock An elementary approach to uniform in time propagation of chaos.
\newblock {\em arXiv preprint arXiv:1805.11387}, 2018.

\bibitem{eberle2018quantitative}
A.~Eberle, A.~Guillin, and R.~Zimmer.
\newblock Quantitative {H}arris-type theorems for diffusions and
  {M}c{K}ean--{V}lasov processes.
\newblock {\em Trans. Amer. Math. Soc.}, 2018.

\bibitem{kurtz}
S.~N. Ethier and T.~G. Kurtz.
\newblock {\em Markov processes: characterization and convergence}, volume 282.
\newblock John Wiley \& Sons, 2009.

\bibitem{feldman2002monge}
M.~Feldman and R.~McCann.
\newblock Monge's transport problem on a {R}iemannian manifold.
\newblock {\em Trans. Amer. Math. Soc.}, 354(4):1667--1697, 2002.

\bibitem{figalli2018global}
A.~Figalli, M.-J. Kang, and J.~Morales.
\newblock Global well-posedness of the spatially homogeneous
  {K}olmogorov--{V}icsek model as a gradient flow.
\newblock {\em Arch. Ration. Mech. Anal.}, 227(3):869--896, 2018.

\bibitem{fournier2015rate}
N.~Fournier and A.~Guillin.
\newblock On the rate of convergence in {W}asserstein distance of the empirical
  measure.
\newblock {\em Probab. Theory Related Fields}, 162(3-4):707--738, 2015.

\bibitem{friesen2018stochastic}
M.~Friesen and O.~Kutoviy.
\newblock On stochastic {C}ucker-{S}male flocking dynamics.
\newblock {\em arXiv preprint arXiv:1806.05846}, 2018.

\bibitem{gamba2016global}
I.~M. Gamba and M.-J. Kang.
\newblock Global weak solutions for {K}olmogorov--{V}icsek type equations with
  orientational interactions.
\newblock {\em Arch. Ration. Mech. Anal.}, 222(1):317--342, 2016.

\bibitem{golse2016dynamics}
F.~Golse.
\newblock On the dynamics of large particle systems in the mean field limit.
\newblock In A.~Muntean, J.~Rademacher, and A.~Zagaris, editors, {\em
  Macroscopic and large scale phenomena: coarse graining, mean field limits and
  ergodicity}, pages 1--144. Springer, 2016.

\bibitem{hebey2000nonlinear}
E.~Hebey.
\newblock {\em Nonlinear analysis on manifolds: Sobolev spaces and
  inequalities}, volume~5 of {\em Courant lecture notes}.
\newblock American Mathematical Soc., 2000.

\bibitem{hildenbrandt2010self}
H.~Hildenbrandt, C.~Carere, and C.~K. Hemelrijk.
\newblock Self-organized aerial displays of thousands of starlings: a model.
\newblock {\em Behavioral Ecology}, 21(6):1349--1359, 2010.

\bibitem{jabin2017mean}
P.-E. Jabin and Z.~Wang.
\newblock Mean field limit for stochastic particle systems.
\newblock In N.~Bellomo, P.~Degond, and E.~Tadmor, editors, {\em Active
  Particles, Volume 1}, pages 379--402. Springer, 2017.

\bibitem{jacodshiryaev}
J.~Jacod and A.~N. Shiryaev.
\newblock Limit theorems for stochastic processes.
\newblock {\em Grundlehren der mathematischen Wissenschaften}, 2003.

\bibitem{jakubowski}
A.~Jakubowski.
\newblock On the {S}korokhod topology.
\newblock {\em Ann. Inst. Henri Poincar\'e Probab. Stat.}, 22(3):263--285,
  1986.

\bibitem{jourdainmeleard}
B.~Jourdain and S.~M\'el\'eard.
\newblock Propagation of chaos and fluctuations for a moderate model with
  smooth initial data.
\newblock {\em Ann. Inst. Henri Poincar\'e Probab. Stat.}, 34(6):727--766,
  1998.

\bibitem{kac1956foundations}
M.~Kac.
\newblock Foundations of kinetic theory.
\newblock In {\em Proceedings of the Third Berkeley Symposium on Mathematical
  Statistics and Probability}, volume~3, pages 171--197. University of
  California Press Berkeley and Los Angeles, California, 1956.

\bibitem{kipnislandim}
C.~Kipnis and C.~Landim.
\newblock {\em Scaling limits of interacting particle systems}, volume 320 of
  {\em Grundlehren der mathematischen Wissenschaften}.
\newblock Springer Science \& Business Media, 1999.

\bibitem{kolokoltsov_2010}
V.~N. Kolokoltsov.
\newblock {\em Nonlinear Markov Processes and Kinetic Equations}.
\newblock Cambridge Tracts in Mathematics. Cambridge University Press, 2010.

\bibitem{lee2018bayesian}
T.~Lee.
\newblock Bayesian attitude estimation with the matrix {F}isher distribution on
  {SO}(3).
\newblock {\em IEEE Transactions on Automatic Control}, 63(10):3377--3392,
  2018.

\bibitem{mardia1975statistics}
K.~V. Mardia.
\newblock Statistics of directional data.
\newblock {\em J. Roy. Statist. Soc. Ser. B (Methodological)}, 37(3):349--371,
  1975.

\bibitem{mardia2009directional}
K.~V. Mardia and P.~E. Jupp.
\newblock {\em Directional statistics}, volume 494.
\newblock John Wiley \& Sons, 2009.

\bibitem{mckean1966class}
H.~P. McKean.
\newblock A class of {M}arkov processes associated with nonlinear parabolic
  equations.
\newblock {\em Proc. Nat. Acad. Sci.}, 56(6):1907, 1966.

\bibitem{mckean1967propagation}
H.~P. McKean.
\newblock Propagation of chaos for a class of non-linear parabolic equations.
\newblock {\em Stochastic Differential Equations (Lecture Series in
  Differential Equations, Session 7, Catholic Univ., 1967)}, pages 41--57,
  1967.

\bibitem{sara}
S.~Merino~Aceituno.
\newblock {\em Contributions in fractional diffusive limit and wave turbulence
  in kinetic theory}.
\newblock PhD thesis, University of Cambridge, 2015.

\bibitem{mischler2013kac}
S.~Mischler and C.~Mouhot.
\newblock Kac's program in kinetic theory.
\newblock {\em Invent. Math.}, 193(1):1--147, 2013.

\bibitem{mishura2016existence}
Y.~S. Mishura and A.~Y. Veretennikov.
\newblock Existence and uniqueness theorems for solutions of mckean--vlasov
  stochastic equations.
\newblock {\em arXiv preprint arXiv:1603.02212}, 2016.

\bibitem{monmarche}
P.~Monmarch{\'e}.
\newblock Elementary coupling approach for non-linear perturbation of {M}arkov
  processes with mean-field jump mechanims and related problems.
\newblock {\em Submitted}, 2018.

\bibitem{nash1956imbedding}
J.~Nash.
\newblock The imbedding problem for {R}iemannian manifolds.
\newblock {\em Ann. of Math.}, pages 20--63, 1956.

\bibitem{Oelschl_ger_1985}
K.~Oelschl{\"a}ger.
\newblock A law of large numbers for moderately interacting diffusion
  processes.
\newblock {\em Zeitschrift f{\"u}r Wahrscheinlichkeitstheorie und Verwandte
  Gebiete}, 69(2):279--322, Mar 1985.

\bibitem{sznitman89}
A.-S. Sznitman.
\newblock Topics in propagation of chaos.
\newblock In {\em {\'E}c. {\'E}t{\'e} Probab. St.-Flour XIX---1989}, pages
  165--251. Springer, 1991.

\bibitem{veretennikov2006ergodic}
A.~Y. Veretennikov.
\newblock On ergodic measures for {M}c{K}ean--{V}lasov stochastic equations.
\newblock In {\em Monte Carlo and Quasi-Monte Carlo Methods 2004}, pages
  471--486. Springer, 2006.

\bibitem{vicsek1995novel}
T.~Vicsek, A.~Czir{\'o}k, E.~Ben-Jacob, I.~Cohen, and O.~Shochet.
\newblock Novel type of phase transition in a system of self-driven particles.
\newblock {\em Phys. Rev. Lett.}, 75(6):1226, 1995.

\bibitem{villani}
C.~Villani.
\newblock Limite de champ moyen.
\newblock {\em Cours de DEA}, 2001.

\bibitem{villanioptimaltransport}
C.~Villani.
\newblock {\em Optimal Transport, Old and New}, volume 338 of {\em Grundlehren
  der mathematischen Wissenschaften}.
\newblock Springer-Verlag Berlin Heidelberg, 2009.

\bibitem{wang2018distribution}
F.-Y. Wang.
\newblock Distribution dependent {SDE}s for {L}andau type equations.
\newblock {\em Stochastic Process. Appl.}, 128(2):595--621, 2018.

\bibitem{williams1991probability}
D.~Williams.
\newblock {\em Probability with martingales}.
\newblock Cambridge {U}niversity {P}ress, 1991.

\end{thebibliography}

\end{document}